\newtheorem{Cor}{Corollary}
\newtheorem{definition}{Definition}
\newtheorem{proposition}{Proposition}
\newtheorem{theorem}{Theorem}
\newcommand\Tr{\mathrm{Tr}}
\newcommand\ad{\mathrm{ad}}
\newcommand\Ad{\mathrm{Ad}}
\newcommand\goth{\mathfrak}
\newcommand\R{\mathbb R}
\newcommand\Z{\mathbb Z}
\newcommand\ind{\mathrm{ind}\,}
\newcommand\cork{\mathrm{corank }\,}
\newcommand\Ker{\mathrm{Ker}\, }
\newcommand\Ann{\mathrm{Ann}\, }
\newcommand\rank{\mathrm{rank}\, }
\newcommand\corank{\mathrm{corank}\, }
\newcommand\codim{\mathrm{codim}\, }
\newcommand\Sing{{\mathsf{Sing}}}
\title{Jordan--Kronecker invariants of finite-dimensional Lie algebras}
\author{Alexey Bolsinov and Pumei Zhang}
\begin{document}
\maketitle

\section{Motivation}\label{motivation}
\begin{itemize}

\item  A Lie algebra $\goth g$ is defined by its structure tensor $c_{ij}^k$. The invariants of $\goth g$ are, in essence, those of $c_{ij}^k$.  This tensor is quite complicated to study and it is natural to try somehow to simplify it first. The classical method is to consider, instead of this tensor, a simpler object, namely, the operator 
$\ad_\xi=\Bigl(\sum c_{ij}^k \xi^i\Bigr)$ for a generic vector $\xi\in\goth g$. This operator defines the decomposition of  $\goth g$ into (generalised) eigenspaces: the zero eigenspace is known as a Cartan subalgebra, the other eigenspaces are root subspaces. Using this approach systematically leads, in particular, to the classification of semisimple Lie algebras. 

We are going to do a similar thing, but instead of the operator  $\ad_\xi$, we suggest to consider the bilinear form  $\mathcal A_x=\Bigl( \sum c_{ij}^k x_k \Bigr)$ for a regular covector $x\in\goth g^*$.  This form  does not give any non-trivial invariants (except for its corank called the {\it index} of $\goth g$).
However, non-trivial invariants immediately appear as soon as we consider a pair of forms
 $\mathcal A_x$ and $\mathcal A_a$ for $x,a\in \goth g^*$.   From the algebraic viewpoint these invariants  look quite natural, and their systematic analysis seems to be an interesting mathematical problem.
 
\item  Some already known results become more transparent  and receive a new interpretation if we look at them from the viewpoint of Jordan--Kronecker invariants.
Besides useful reformulations,  in this way one can get  new non-trivial results (for example,  Theorems \ref{Vorontsov} and \ref{thfrob} below).

\item We expect that these technics  will be useful in the study of the coadjoint representation of non-semisimple Lie algebras. Many papers are focused  just on the semisimple case, but the methods used in this area are so specific that their generalisation to the case of arbitrary Lie algebras is hardly possible.  It would be very desirable to develop universal tools and ideas.

\item Finally, the main reason why we have been involved in this area is the generalised ``argument shift conjecture'' discussed below.
Apparently,  to prove or disprove  it will necessarily require the concept of Jordan-Kronecker invariants.
This conjecture itself seems to be very important as the argument shift method  is one of few indeed universal constructions which are worth being treated in detail.

\end{itemize}

\section{Some historical remarks}
The idea of Jordan--Kronecker invariants is based on the results, methods and constructions  invented and developed by different mathematicians in different years  and sometimes even not related to each other.

\begin{enumerate}
\item  The main point for us is, no doubt, the argument shift method suggested in 1976 by A.S.\,Mischenko and A.T.\,Fomenko \cite{MischFom}.  This construction  has  been analysed, developed and generalised by participants of the seminar ``Modern geometric methods''  at Moscow State University in the 80s  (V.V.\,Trofimov, A.V.\,Brailov, Dao Trong Tkhi, M.V.\,Mescherjakov and others) and many of their results have been extremely important.

\item In the late 80s,  I.M.\,Gelfand and I.\,Zakharevich discovered a very interesting relationship between compatible Poisson brackets, veronese webs  and the Jordan--Kronecker decomposition theorem for a pair of skew-symmetric forms.
This observation then played a very important role in a series or papers by I.M.\,Gelfand and I.\,Zakharevich  \cite{gelzak, zak} devoted to Kronecker pencils and their applications to the theory of integrable systems.

\item 
The Jordan--Kronecker decomposition theorem in full generality is presented in the paper  \cite{thom} by R.\,Thompson.  Although  all essential ingredients of this theorem can be found in classical works by C.\,Jordan and L.\,Kronecker, to the best of our knowledge,  the paper \cite{thom} is the first to contain a rigorous formulation and proof of this purely algebraic result\footnote{Yu.\,Neretin has recently informed us about the work by G.B.\,Gurevich \cite{gurevich} containing the same result, but we have not had a chance to  see this paper.  We hope to do it in the near future and will then revise our main reference.}.

\item In the symplectic case, a transition from the algebraic canonical form of a pair of  skew-symmetric matrices to the differential-geometric normal form of a pair of compatible Poisson structures  has been carried out by
F.-J.\,Turiel   \cite{turiel}. That was a crucial step in understanding local structure of compatible Poisson structures.  However, the description of their normal forms  in the general case still remains an open and very difficult problem, see \cite{tur2}, \cite{tur3} for recent development in this area.

\item
In fact, the concept of Jordan--Kronecker invariants in implicit form can be found  in many papers devoted to integrable systems on Lie algebras.  Besides the above mentioned papers, first of all we would like to refer to the series of papers by A.\,Panasyuk \cite{Pan1, Pan2, Pan3} where the Jordan--Kronecker decomposition has been effectively used, see also \cite{BO-rcd}, \cite{Vorontsov}, \cite{koz}, \cite{Izosimov1}.

\end{enumerate}

Although all these ideas based on the Jordan--Kronecker decomposition seem to be very useful, they still remain widely unknown.
The present paper can be considered as an attempt to summarise  them in a unified and systematic way by putting into focus the Jordan-Kronecker invariants as a very natural algebraic object. Of course, the paper contains a number of new results too.

The structure of the paper is as follows.  Sections 3, 4, 5  can be viewed as introduction to the main  subject of the paper.  In Section 3, we recall some basic notions  and notation to be used throughout the paper.  Section 4 is devoted to the argument shift method,  Mischenko--Fomenko conjecture and its generalisation which we consider as the main motivation for our work. In Section 5,  we formulate the Jordan--Kronecker decomposition theorem for a pair of skew-symmetric forms and discuss some  linear algebraic corollaries from this result. These quite elementary facts will then be ``translated'' into the language of Lie algebras and will lead us (surprisingly  easily) to some not at all obvious results. 

This programme will be realised in Sections 7-10 in the context of Jordan--Kronecker invariants which are introduced in  Section 6.   The final section  is devoted to examples and computations.

The authors are very grateful to Andriy Panasyuk, Francisco-Javier Turiel and Ilya Zakharevich for very stimulating discussions. We also would like to thank the participants of an informal seminar which has been working over the past several years between Loughborough and Moscow, especially,  Andrey Oshemkov,  Sasha Vorontsov,  Andrey Konjaev, Anton Izosimov and Ivan Kozlov.   In many respects, the present paper is a result of these discussions.
The work was supported by the Ministry of Science and Education of Russia,  grants no.
14.740.11.0876 and 11.G34.31.0039.

\medskip
\section{Background:  basic notions, facts and notation}

Here we recall some basic notions and introduce notation we use throughout the paper. In what follows, we consider  vector spaces, Lie algebras and other algebraic objects over $\mathbb C$ unless otherwise specified.  The transition to the real case is usually straightforward.

\begin{itemize}

\item Finite-dimensional Lie algebra $\goth g$ and its dual space $\goth g^*$.

\item {\it Adjoint and coadjoint representations} of a Lie group $G$ and its Lie algebra~$\goth g$:
$$
\Ad_X \xi = \left. \frac{d}{dt}\right|_{t=0} X \exp (t\xi) X^{-1},
$$
$$
\langle \Ad^*_X  a , \xi\rangle = \langle a , \Ad^{-1}_X \xi \rangle,
$$ 
where $X\in G$, $\xi\in\goth g$, $a\in\goth g^*$. Similarly:
$$
\ad_\xi \eta=[\xi,\eta], \quad  \langle  \ad^*_\xi a , \eta\rangle =\langle a, -[\xi,\eta]\rangle.
$$

If  $\goth g\subset \mathrm{gl}(n,\mathbb C)$ is a matrix Lie algebra, then the coadjoint representation can be defined explicitly, for example,  as follows. By using the pairing  $\langle a,\xi\rangle = \mathrm{Tr}\, a\xi$,  we identify $\goth g^*$ with the subspace $\bar{\goth g}^\top\subset \mathrm{gl}(n,\mathbb C)$,  obtained from $\goth g$ by transposition and complex conjugation. Then
$$
\Ad^*_X a = \mathrm{pr} (XaX^{-1}), \quad \ad^*_\xi a = \mathrm{pr} ([\xi, a]),
$$
where $\mathrm{pr}:  \mathrm{gl}(n,\mathbb C) \to \goth g^*=\bar{\goth g}^\top$ is the natural projection with the kernel $\goth g^\bot$, i.e.,  $\langle a - \mathrm{pr}(a) , \goth g\rangle = 0$ for every $a\in\mathrm{gl}(n,\mathbb C)$.

\item The {\it Lie--Poisson bracket} on $\goth g^*$:
$$
\{f,g\}(x) = \langle x , [df(x),dg(x)] \rangle, \qquad  x\in\goth g^*,  \quad f,g:\goth g^* \to \mathbb C.
$$

The corresponding Poisson tensor is given by the skew-symmetric matrix $\mathcal A_x = \Bigl(c_{ij}^k x_k\Bigr)$, i.~e., depends linearly on coordinates.

The algebra $P(\goth g)$ of polynomials on $\goth g^*$ endowed with this bracket is called the {\it Lie-Poisson algebra} (associated to $\goth g$).

\item The coadjoint orbits are symplectic leaves of the Lie-Poisson structure, and vice versa.

The {\it Casimir functions} (i.e.,  functions $f$ satisfying  $\{f,g\}=0$ for all $g$) are exactly the invariants of the coadjoint representation. We shall denote the set (algebra) of coadjoint invariants  by $I(\goth g)$. We do not specify here the class of such functions (polynomial, rational, etc.), because in general we can only guarantee existence of locally analytic Casimir functions in a neighborhood of a generic point.  But even local Casimirs will be sufficient for our purposes.

\item The {\it annihilator} of an element $a\in \goth g^*$ is the stationary subalgebra of $a$ in the sense of the coadjoint representation:
$$
\mathrm{Ann}\, a =\{ \xi\in\goth g~|~ \ad^*_\xi a=0\}.
$$

In terms of the Lie-Poisson structure, the annihilator of $a\in \goth g^*$ can be characterised as the kernel of the form  $\mathcal A_a$.
If $a\in\goth g^*$ is regular, then the differentials of (local) coadjoint invariants $f_i$,  form a basis of
 $\Ann a$. In general,  $df(a)$'s span a certain subspace in $\Ann a$.

\item The {\it index} of a Lie algebra $\goth g$ is the codimension of a regular coadjoint orbit. Equivalently,
$$
\ind \goth g= \min_{x\in\goth g^*} \dim\Ann\, x
$$

The index can also be characterised as the number of functionally independent (local) coadjoint invariants, i.e., Casimirs.

If $\ind \goth g=0$, then the Lie algebra  $\goth g$ is said to be {\it Frobenius}.

\item The {\it singular set}
$\Sing \subset \mathfrak g^*$ consists of those points $y \in \mathfrak g^*$ for which
$\cork\mathcal A_y>\ind\mathfrak g$,  where $\mathcal A_y$ is the Lie-Poisson tensor at the point 
$y$.  In other words, $\Sing$ is the set of all coadjoint orbits of non-maximal dimension.
Equivalently,
$$
\Sing=\{ y\in \goth g^*~|~ \dim \mathrm{Ann}\,(y) > \ind\goth g\}
$$

\item Let $f: \goth g^* \to \mathbb C$ be a coadjoint invariant,  $f\in I(\goth g)$.  Choose and fix a regular element $a \in\goth g^*$ and consider the functions of the form $f_\lambda (x) = f(x+\lambda a)$, $\lambda \in \mathbb C$. The family of functions
\begin{equation}
 \{ f(x+\lambda a) ~|~ f\in I(\goth g), \ \lambda\in \mathbb C\}
\label{classshifts}
\end{equation}
is said to be a {\it family of shifts  (of coadjoint invariants)}.  This classical definition from \cite{MischFom}  needs however to be slightly modified. The reason is that for non-algebraic Lie algebras the coadjoint invariants may not be globally defined, whereas we want to have a global and universal construction for all types of Lie algebras.  

Consider locally analytic  invariants $f_1,\dots, f_s$, $s=\ind \goth g$ defined in a neighbourhood of $a\in \goth g^*$  such that their differentials $df_i(a)$ form a basis of $\Ann a$ (recall that $a$ is regular so that such invariants do exist).  Take the Taylor expansions of $f_i$ at $a$:
$$
f_i(a+\lambda x) = f_i^0 + \lambda f_i^1(x) +\lambda^2 f_i^2(x) +\lambda^3 f_i^3(x) + \dots
$$
where $f_i^k(x)$ is a homogeneous polynomial in $x$ of degree $k$.

It is not hard to see that the collection of $f_i^k$'s is somehow equivalent to \eqref{classshifts}: in the simplest case, for example, when $f_i$ are homogeneous polynomials,   $f_i^k$'s form a spanning set of the family of shifts $f_i(x+\lambda a)$.
That is why, in what follows,  we replace \eqref{classshifts} by the subalgebra  $\mathcal F_a \subset P(\goth g)$ generated by the homogeneous polynomials
\begin{equation}
\label{generators}
f_i^k (x),  \quad  i=1,\dots,\ind \goth g,  \ k >0.
\end{equation} 
We call $\mathcal F_a$  the {\it algebra of (polynomial) shifts}.   Such a modification is useful  for at least three reasons (see \cite{bz}):
\begin{itemize}
\item this approach is universal and purely algebraic which allows us to work with arbitrary Lie algebras over any field of characteristic zero;
\item the  algebra of polynomial shifts $\mathcal F_a$ is canonical, i.e., does not depend on the choice of local invariants $f_1,\dots, f_s$ we started with;
\item  to construct the family \eqref{classshifts} of classical shifts, we need  to find the $\Ad^*$-invariants which is not an easy task, whereas  generating elements \eqref{generators} of  $\mathcal F_a$ can be found explicitly by solving systems of linear equations.  
\end{itemize}

\end{itemize}

\section{Generalised argument shift conjecture}

In the skew-symmetric forms $\mathcal A_x$ and $\mathcal A_a$ mentioned in Section \ref{motivation} one can easily recognise two well-known Poisson structures on the dual space   $\goth g^*$ of a finite-dimensional Lie algebra $\goth g$.

The first of them is the standard Lie-Poisson bracket:
\begin{equation}
\{ f, g \} (x) = \mathcal A_x \bigl(df(x), dg(x)\bigr)= \sum c_{ij}^k x_k \frac{\partial f}{\partial x_i} \frac{\partial g}{\partial x_j}, 
\label{LiePoisson}
\end{equation}
where $x\in \goth g^*, \ f,g:\goth g^* \to \mathbb C$.

From the algebraic viewpoint, a completely integrable system on $\goth g^*$ is a complete commutative family (subalgebra) $\mathcal F\subset P(\goth g)$. {\it Completeness} in this context means that $\mathcal F$ contains $\frac{1}{2} (\dim \goth g +\ind\goth g)$ algebraically independent polynomials.

One of the most efficient methods for constructing such families $\mathcal F\subset P(\goth g)$ is to use an additional Poisson structure compatible with  \eqref{LiePoisson}.  As the simplest structure with such a property, one can take the constant Poisson bracket given by the following well-known formula:
\begin{equation}
\{ f, g \}_a (x) = \mathcal A_a \bigl(df(x), dg(x)\bigr)= 
\sum c_{ij}^k a_k \frac{\partial f}{\partial x_i} \frac{\partial g}{\partial x_j},
\label{abracket}
\end{equation}
where $a\in \goth g^*$ is a fixed element.  Here we assume  $a\in\goth g^*$ to be regular although this formula makes sense for an arbitrary $a$, 

The argument shift method suggested by A.S.Mischenko and  A.T.Fomenko in \cite{MischFom} is based on the following observation (which can be naturally generalised to the case of arbitrary compatible Poisson brackets).  Let $f$ and $g$ be coadjoint invariants. Then the functions $f(x+\lambda a)$ and $g(x+\mu a)$ commute with respect to the both brackets \eqref{LiePoisson}  and \eqref{abracket}.
Notice that the shifts $f(x+\lambda a)$ are exactly  Casimirs for the linear combination $\{~,~\} + \lambda\{~,~\}_a$.  Replacing these  shifts, as was explained in Section 3, by the algebra $\mathcal F_a$ of polynomial shifts, we can reformulate the main result of \cite{MischFom} as follows.  

\begin{theorem}[A.S.\,Mischenko, A.T.\,Fomenko \cite{MischFom}]\label{MFth}\quad

{\rm 1)} The functions from $\mathcal F_a$ pairwise commute with respect to the both brackets  $\{~,~\}$ and $\{~,~\}_a$.

{\rm 2)} If $\goth g$ is semisimple,  then $\mathcal F_a$ is complete, i.e. contains $\frac12(\dim \goth g + \ind\goth g)$ algebraically independent polynomials.  
\end{theorem}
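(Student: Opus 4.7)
Part (1) follows from the classical Mischenko--Fomenko trick. Fix two local coadjoint invariants $f,g$ defined near $a$ and set $f_\lambda(x):=f(x+\lambda a)$, $g_\mu(x):=g(x+\mu a)$. Using the identity $\mathcal A_x=\mathcal A_{x+\lambda a}-\lambda\,\mathcal A_a$ together with the invariance $df(x+\lambda a)\in\Ann(x+\lambda a)$, we rewrite
$$
\{f_\lambda,g_\mu\}(x)\;=\;\mathcal A_x\bigl(df(x+\lambda a),dg(x+\mu a)\bigr)\;=\;-\lambda\,\mathcal A_a\bigl(df(x+\lambda a),dg(x+\mu a)\bigr).
$$
Arguing symmetrically with the invariance of $g$ at $x+\mu a$ yields the same expression equal to $-\mu\,\mathcal A_a(\cdot,\cdot)$; comparing the two for arbitrary $\lambda\neq\mu$ forces $\{f_\lambda,g_\mu\}_a\equiv 0$ and, via the first identity, $\{f_\lambda,g_\mu\}\equiv 0$ as well. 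Matching coefficients of $\lambda^j\mu^k$ in the Taylor expansions transports the commutativity to every pair of generators $f_i^j,f_\ell^k$, and the Leibniz rule extends it to the entire subalgebra $\mathcal F_a\subset P(\goth g)$.

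For part (2) I would place the argument inside the bi-Hamiltonian framework of the pencil $\{\mathcal A_x+\lambda\,\mathcal A_a\}_\lambda$. The shifts $f_i(x+\lambda a)$ are precisely the Casimirs of this pencil, so by the standard Jordan--Kronecker completeness principle the subalgebra $\mathcal F_a$ will be complete at a point $x$ as soon as the pencil is of \emph{pure Kronecker type} there, i.e.\ as soon as $\mathcal A_{x+\lambda a}$ has maximal rank for every $\lambda\in\mathbb C\cup\{\infty\}$ (the value $\lambda=\infty$ corresponding to the already assumed regularity of $a$). Granted this condition, the subspace
$$
L_x\;=\;\sum_{\lambda\in\mathbb C\cup\{\infty\}}\Ann(x+\lambda a)\;\subset\;\goth g
$$
is isotropic with respect to both forms of the pencil, and the Jordan--Kronecker decomposition of Section 5 yields the linear-algebraic equality $\dim L_x=\tfrac12(\dim\goth g+\ind\goth g)$. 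The expansion $df_i(x+\lambda a)=\sum_k\lambda^k\,df_i^k(x)$ shows that $L_x$ is spanned by the differentials $df_i^k(x)$, giving exactly the required number of algebraically independent generators of $\mathcal F_a$.

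The main obstacle is to secure the pure Kronecker property for semisimple $\goth g$, and this is precisely where semisimplicity enters: by Kostant's theorem, the singular set $\Sing\subset\goth g^*$ has codimension at least three, so its translate $\Sing+\mathbb C a$ has codimension at least two, and for $x$ outside a proper Zariski closed subset the entire line $x+\mathbb C a$ avoids $\Sing$. Equivalently, $x+\lambda a$ is regular for every $\lambda\in\mathbb C\cup\{\infty\}$, which is exactly the hypothesis needed to put the pencil in Kronecker form at $x$. The remaining isotropy and dimension computations then reduce to routine linear algebra on a regular skew-symmetric pencil.
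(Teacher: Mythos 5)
Your proposal is correct and follows essentially the same route as the paper: part (1) is the classical Mischenko--Fomenko computation (which the paper packages as item 1 of Corollary \ref{cor2} for the pencil $\mathcal A_x+\lambda\mathcal A_a$), and part (2) is exactly the argument of Theorem \ref{kroncase} combined with Kostant's fact that $\codim\Sing=3$ for semisimple $\goth g$, so that a generic line $x+\mathbb C a$ misses $\Sing$, the pencil is Kronecker, and the differentials of the shifts span a bi-Lagrangian subspace of dimension $\frac12(\dim\goth g+\ind\goth g)$. No gaps.
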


Although in general $\mathcal F_a$ is not necessarily complete,   A.S.\,Mischenko and A.T.\,Fomenko stated the following well known conjecture

\medskip

\noindent {\bf  Mischenko--Fomenko conjecture.}
{\it On the dual space $\goth g^*$ of an arbitrary Lie algebra $\goth g$ there exists a complete family $\mathcal F$ of commuting polynomials. }
\medskip

In other words, for each $\goth g$ one can construct a completely integrable (polynomial) system on $\goth g^*$ or,  speaking in algebraic terms,  the Lie-Poisson algebra $P(\goth g)$ always contains a complete commutative subalgebra.

This conjecture was proved in 2004 by  S.T.Sadetov \cite{sadetov}, see also \cite{bolsinov},\cite{vinyak}.  However, Sadetov's family $\mathcal F\subset P(\goth g)$ is essentially different from the family of shifts $\mathcal F_a$. Thus,  it is still an open question whether or not one can modify the argument shift method to construct a complete family in bi-involution.  In all the examples we know,  the answer is positive which allows us to propose the following bi-Hamiltonian version of the Mischenko--Fomenko conjecture.

\medskip
\noindent {\bf  Generalised argument shift conjecture.}   
{\it On the dual space $\goth g^*$ of an arbitrary Lie algebra $\goth g$ there exists a complete family $\mathcal G_a$ of  polynomials in bi-involution, i.e. in  involution w.r.t. the two brackets 
$\{~,~\}$ and $\{~,~\}_a$}.

\medskip

In fact,  our conjecture can be reformulated in the following equivalent way:
{\it the algebra  $\mathcal F_a$ of polynomial shifts can always be extended up to a complete subalgebra $\mathcal G_a\subset P(\goth g)$ of polynomials in bi-involution}.

\section{Jordan--Kronecker decomposition theorem}

The below theorem gives the classification of pairs of skew-symmetric forms
  $\mathcal A, \mathcal B$ by reducing them simultaneously to an elegant canonical block-diagonal form.
  
Usually one refers to this result as the Jordan--Kronecker theorem since the classical works by these two famous mathematicians  (written in the second half of the XIXth century) contain all of the most important ideas and ingredients of this construction.   A more recent reference is a very interesting paper by R.~Thompson, which  serves as a good and complete survey on this subject and related topics (see also \cite{gurevich} by G.B.\,Gurevich and  a note  \cite{koz} by I.~Kozlov with a  short proof).

\begin{theorem}
\label{JKD}
Let $\mathcal A$  and  $\mathcal B$ be two skew-symmetric bilinear forms on a complex vector space $V$.
Then by an appropriate choice of a basis, their matrices can be simultaneously reduced to the
following canonical block-diagonal form:
$$\mathcal A  \mapsto \begin{pmatrix}
\mathcal A_1 &   &    &  \\
  &  \mathcal A_2  & & \\
  & & \ddots & \\
  & & & \mathcal  A_k
  \end{pmatrix},
  \qquad
\mathcal B \mapsto \begin{pmatrix}
\mathcal B_1 &   &    &  \\
  &  \mathcal B_2  & & \\
  & & \ddots & \\
  & & &  \mathcal B_k
  \end{pmatrix}
  $$
where the pairs of the corresponding blocks $\mathcal A_i$ and $\mathcal B_i$ can be of the following three types:

\vskip-10pt
$$\hskip-30pt
\begin{array}{lcc}
  &  \mathcal A_i  &  \mathcal B_i     \\  & & \\
\begin{array}{l} \text{Jordan block } \\ (\lambda_i\in \mathbb{C}) \end{array} &
\begin{pmatrix}
     &   \!\!\! J(\lambda_i) \\  & \\
   \!  -J^\top(\lambda_i) &
 \end{pmatrix} &
 \begin{pmatrix}
   & \ \ -\mathrm{Id} \ \  \\  &  \\
   \mathrm{Id} &
 \end{pmatrix} \\     & &
 \\

\begin{array}{l}\text{Jordan block } \\ (\lambda_i=\infty) \end{array} &
  \begin{pmatrix}
   & \ \ - \mathrm{Id} \ \  \\   &  \\
   \mathrm{Id} &
 \end{pmatrix}
   &
\begin{pmatrix}
     &   \!\!\!\! J(0) \\    &  \\
   \!  -J^\top(0) &
 \end{pmatrix}
 \\  &  &
  \\

\begin{array}{l} \text{Kronecker} \\
\text{block} \end{array} &
 \!\!\!\!\!\!\!\!\!\!\!\! \begin{pmatrix}
     &
\hskip-10pt \boxed{ \begin{matrix}
  1 & 0  & & \\
      &\ddots & \ddots & \\
     &           &  1 & 0
     \end{matrix}}
     \\
\boxed{\begin{matrix}
\!\!\! -1  &             &   \\
  0 & \ddots &   \\
     & \ddots &\!\!\! -1 \\
     &             &0
  \end{matrix}
  }\end{pmatrix}
 &
  \begin{pmatrix}
     &
\hskip-10pt \boxed{ \begin{matrix}
  0 & 1  & & \\
      &\ddots & \ddots & \\
     &           &  0 & 1
     \end{matrix}}
     \\
\boxed{\begin{matrix}
 0  &             &   \\
  \!\!\! -1 & \ddots &   \\
     & \ddots &0 \\
     &             &\!\!\! -1
  \end{matrix}
  }\end{pmatrix}

\end{array}
$$
where $J(\lambda_i)$ denotes the standard Jordan block
$$
J(\lambda_i)=\begin{pmatrix}
\lambda_i&1&&\\
&\lambda_i&\ddots&\\
&&\ddots&1\\&&&\lambda_i\end{pmatrix}.
$$
\end{theorem}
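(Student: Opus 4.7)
The plan is to analyse the pencil $\mathcal{A}_\lambda = \mathcal{A} + \lambda\mathcal{B}$ by separating its \emph{regular} part (where $\det\mathcal{A}_\lambda$ does not vanish identically in $\lambda$) from its \emph{singular} part (where it does), and to split $V$ correspondingly into a direct sum of subspaces on which both forms restrict. Kronecker blocks will come from the singular part and Jordan blocks from the regular part; the proof is by induction on $\dim V$, peeling off one canonical block at a time.

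\textbf{Regular case.} Assume $\det\mathcal{A}_\lambda\not\equiv 0$. After shifting $\lambda$ if necessary one may assume that $\mathcal{B}$ is non-degenerate; the $\lambda_i=\infty$ blocks arise from the symmetric situation in which $\mathcal{A}$ is non-degenerate instead. Define $P:V\to V$ by $\mathcal{B}(Pv,w)=\mathcal{A}(v,w)$. Skew-symmetry of both forms yields $\mathcal{B}(Pv,w)=\mathcal{B}(v,Pw)$, so $P$ is self-adjoint with respect to the symplectic form $\mathcal{B}$. Decomposing $V=\bigoplus_\mu V_\mu$ into the generalised eigenspaces of $P$, distinct $V_\mu$ are automatically $\mathcal{B}$-orthogonal, so $\mathcal{B}|_{V_\mu}$ is non-degenerate and each $V_\mu$ is invariant for both forms. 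On each $V_\mu$ one applies the standard classification of symplectic self-adjoint operators: a Jordan chain of $P$ of length $n$ pairs with its $\mathcal{B}$-dual chain, giving a basis in which $\mathcal{B}$ has the standard symplectic shape and $P$ is block-Jordan, so that $\mathcal{A}=\mathcal{B}P$ takes precisely the Jordan form of the theorem with $\lambda_i=\mu$.

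\textbf{Singular case.} If $\det\mathcal{A}_\lambda\equiv 0$, there is a nonzero polynomial vector $v(\lambda)=v_0+\lambda v_1+\dots+\lambda^k v_k$ satisfying $\mathcal{A}_\lambda v(\lambda)\equiv 0$, which unfolds into the chain relations
$$
\mathcal{A}v_0=0,\quad \mathcal{A}v_{i+1}+\mathcal{B}v_i=0 \ \ (0\le i\le k-1),\quad \mathcal{B}v_k=0.
$$
Pick such a solution of minimal degree $k$. The chain relations together with skew-symmetry force $\spann(v_0,\dots,v_k)$ to be totally isotropic for both $\mathcal{A}$ and $\mathcal{B}$; adding suitable dual vectors $u_1,\dots,u_k$ one obtains a $(2k+1)$-dimensional subspace $W$ on which $\mathcal{A}$ and $\mathcal{B}$ take exactly the Kronecker shapes of the theorem. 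A computation using minimality shows that $W$ admits a complement $W'$ which is invariant for both forms (namely the simultaneous orthogonal of $W$ with respect to a generic $\mathcal{A}_{\lambda_0}$, after a controlled adjustment of the $u_i$), and the induction hypothesis, applied to $(V',\mathcal{A}|_{W'},\mathcal{B}|_{W'})$, completes the proof.

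\textbf{Main obstacle.} The genuinely hard step is the splitting in the singular case: choosing the dual vectors $u_i$ so that $W$ is non-degenerately paired in a manner compatible with the existence of an invariant complement $W'$, and ensuring that the chain extracted is truly minimal so that no stray Jordan or Kronecker data from $W'$ leaks into $W$. The decisive algebraic input — which also explains why skew-symmetric Kronecker blocks are square of odd size $2k+1$ rather than the generic $k\times(k+1)$ rectangles of classical pencil theory — is that left and right minimal indices of a skew-symmetric pencil coincide, a duality forced by $\mathcal{A}^\top=-\mathcal{A}$, $\mathcal{B}^\top=-\mathcal{B}$. Once minimality is correctly exploited to control isotropy of the chain and the transversality of $W$ and $W'$, the inductive step closes and the three types of blocks listed in the theorem exhaust all possibilities.
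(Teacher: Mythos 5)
The paper does not actually prove Theorem~\ref{JKD}: it is quoted from R.~Thompson's survey \cite{thom} (see also \cite{gurevich} and the short proof in \cite{koz}), so there is no in-paper argument to compare against. Your outline is the standard route and essentially coincides with the cited proof of Kozlov: in the regular case, pass to the recursion operator $P$ with $\mathcal B(Pv,w)=\mathcal A(v,w)$, use $\mathcal B$-self-adjointness to split $V$ into $\mathcal B$-orthogonal generalised eigenspaces, and invoke the classification of nilpotent self-adjoint operators on a symplectic space (Jordan chains pairing with their $\mathcal B$-dual chains); in the singular case, extract a minimal polynomial solution of $\mathcal A_\lambda v(\lambda)\equiv 0$, check that the chain $v_0,\dots,v_k$ is bi-isotropic, complete it to a $(2k{+}1)$-dimensional Kronecker block, split off a bi-invariant complement, and induct. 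Your remark that the odd square size $2k+1$ reflects the coincidence of left and right minimal indices forced by skew-symmetry is also correct.

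Two caveats. First, the two steps you invoke as ``standard'' and ``a computation'' --- the canonical form of a symplectically self-adjoint nilpotent operator, and the existence of the bi-invariant complement $W'$ --- are not routine bookkeeping; they are essentially the whole content of the theorem, so as written the proposal is an architecture rather than a proof. Second, the one concrete recipe you do give for the complement cannot work as stated: the $\mathcal A_{\lambda_0}$-orthogonal complement of $W$ (for any single $\lambda_0$) always contains the nonzero vector $v(\lambda_0)=\sum \lambda_0^i v_i$, which lies in $W$ itself since $\mathcal A_{\lambda_0}v(\lambda_0)=0$; hence that orthogonal meets $W$ nontrivially and is never a direct complement. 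One must instead work with the intersection of the $\mathcal A$- and $\mathcal B$-orthogonals of the chain and then adjust the dual vectors $u_1,\dots,u_k$ so that the extra functionals $\mathcal A(u_j,\cdot)$, $\mathcal B(u_j,\cdot)$ do not cut the candidate complement below the required dimension $\dim V-(2k+1)$; this adjustment, controlled by the minimality of $k$, is exactly where the real work of the singular case lives.
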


As a special case in this theorem, we consider the pair of trivial $1\times 1$ blocks $\mathcal A_i = 0$ and $\mathcal B_i = 0$.  We refer to such a situation as a {\it trivial} Kronecker block.

Notice that the choice of a canonical basis is not unique. Equivalently, one can say that the automorphism group of the pair $(\mathcal A, \mathcal B)$ is not trivial (this group has been described and studied in \cite{PumeiDiss}). However, the blocks $\mathcal A_i$ and $\mathcal B_i$ are defined uniquely up to permutation.

For the linear combination $\mathcal A + \lambda \mathcal B$ we will sometimes use the notation
 $\mathcal A_\lambda$. Besides, we will formally set $\mathcal A_\infty = \mathcal B$ having in mind that we are interested in these forms up to proportionality so that the parameter $\lambda$ of the pencil $\mathcal P = \{\mathcal A_\lambda\}$ generated by $\mathcal A$ and $\mathcal B$ belongs, in fact,  to the projective line $\mathbb CP^1$.

The rank of the pencil $\mathcal P$ is naturally defined as $\rank \mathcal P = \max_\lambda \rank \mathcal A_\lambda$.  The numbers $\lambda_i$ that appear in the Jordan blocks $\mathcal A_i$  of the Jordan--Kronecker canonical form given in Theorem \ref{JKD} are called 
{\it  characteristic numbers} of the pencil $\mathcal P$. They play the same role as ``eigenvalues'' in the case of linear operators. More precisely, $\lambda_i$ are those numbers for which the rank of $\mathcal A_\lambda$ for $\lambda=\lambda_i$ is not maximal,  i.e., $\rank \mathcal A_{\lambda_i} < \rank \mathcal P$. The case of Jordan blocks with $\lambda_i=\infty$ can always be avoided by replacing $\mathcal B$ with $\mathcal B^\prime=\mathcal B+\mu \mathcal  A$ for a suitable $\mu$. So from now on, unless otherwise stated, we shall assume that $\infty$ is not a characteristic number, so that no Jordan block with ``infinite eigenvalue'' appears.  There is a natural analog of the characteristic polynimial $\mathsf p(\lambda)$ whose roots are exactly the characteristic numbers with multiplicities. In order to define $\mathsf p(\lambda)$ in invariant terms, we consider all diagonal minors of the matrix $A+ \lambda B$ of order $\rank \mathcal P$ and take the Pfaffians for each of them. They are obviously polynomial in $\lambda$. Then $\mathsf p(\lambda)$ is the greatest common divisor of all these Pfaffians.

If $\mu \ne \lambda_i$, then we call the form $\mathcal A_\mu$ {\it regular}  (in the pencil $\mathcal P=\{\mathcal A_\lambda\}$).  The set of characteristic numbers $\lambda_i$ of the pencil $\mathcal P$ will be denoted by $\Lambda$.

The size of each Kronecker block is an odd number $2k_i - 1$, $ i=1,\dots, s$.  As we shall see below, the numbers $k_i$ in many cases have a natural algebraic interpretation and we shall call them {\it Kronecker indices} of the pencil $\mathcal P=\{ \mathcal A_\lambda\}$. Notice, by the way,  that the number of Kronecker blocks $s$ is equal to $\corank \mathcal P$.

The Jordan--Kronecker decomposition theorem  immediately implies several important facts.  First of all, we can always find a large subspace which is isotropic simultaneously for all forms from a given pencil $\mathcal P$.  Speaking more formally, we call a subspace $U\subset V$ bi-Lagrangian w.r.t. a pencil $\mathcal P$ if $U$ is isotropic for all $\mathcal A_\lambda \in \mathcal P$ and $\dim U=\frac{1}{2}(\dim V+\cork  \mathcal P )$. In other words,  $U$ is a common maximal isotropic subspace for all regular forms $A_\lambda \in \mathcal P$.

\begin{Cor}
\label{cor1}
For every pencil $\mathcal P=\{\mathcal A_\lambda\}$, there is a bi-Lagrangian subspace $U\subset V$.  \end{Cor}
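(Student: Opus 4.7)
The plan is to apply the Jordan--Kronecker decomposition theorem (Theorem \ref{JKD}) to write $V = \bigoplus_i V_i$ as a direct sum of invariant subspaces corresponding to the blocks, exhibit in each summand a bi-isotropic subspace $U_i \subset V_i$ of the correct dimension, and then set $U = \bigoplus_i U_i$. Since the decomposition is simultaneously block-diagonal for $\mathcal A$ and $\mathcal B$, a direct sum of bi-isotropic pieces is automatically bi-isotropic for $(\mathcal A, \mathcal B)$; and because every $\mathcal A_\lambda$ is a linear combination of $\mathcal A$ and $\mathcal B$, isotropy for both generators forces isotropy for the entire pencil $\mathcal P$. So only the two generating forms need to be checked, and only block by block.

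For a Jordan block (with $\lambda_i \in \mathbb C$ or $\lambda_i = \infty$) of size $2n$, the matrices $\mathcal A_i$ and $\mathcal B_i$ displayed in Theorem \ref{JKD} both have a vanishing upper-left $n \times n$ corner. Consequently the subspace $U_i$ spanned by the first $n$ basis vectors of $V_i$ is isotropic for both forms, and $\dim U_i = n = \tfrac{1}{2}\dim V_i$. A Jordan block contributes $0$ to $\corank \mathcal P$ (both forms are generically non-degenerate on it), so $\tfrac{1}{2}\dim V_i$ is exactly the required dimension. For a Kronecker block of size $2k-1$, direct inspection of the canonical matrices shows that the lower-right $k \times k$ corner of both $\mathcal A_i$ and $\mathcal B_i$ vanishes; therefore the subspace $U_i$ spanned by the \emph{last} $k$ basis vectors is bi-isotropic. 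Such a block contributes $1$ to $\corank \mathcal P$, and $k = \tfrac{1}{2}((2k-1)+1)$ matches the target dimension.

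Summing over blocks yields $\dim U = \tfrac{1}{2}\sum_i(\dim V_i + \corank \mathcal P_i) = \tfrac{1}{2}(\dim V + \corank \mathcal P)$, which is the bi-Lagrangian dimension. The proof is essentially an unpacking of Theorem \ref{JKD}: in each summand the chosen $U_i$ is simply the ``half'' of the block visibly annihilated by both $\mathcal A_i$ and $\mathcal B_i$. There is no serious obstacle; the one point deserving attention is the arithmetic in the Kronecker case, where one must correctly pair the $+1$ contribution to $\corank \mathcal P$ with the choice of the $k$-dimensional (rather than $(k-1)$-dimensional) half of the $(2k-1)$-dimensional summand, since it is precisely this asymmetry that makes the aggregate dimension equal $\tfrac{1}{2}(\dim V + \corank \mathcal P)$ rather than $\tfrac{1}{2}\dim V$.
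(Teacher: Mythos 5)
Your proof is correct and is essentially the paper's own argument made explicit: the paper likewise takes $U$ to be the direct sum, over all blocks, of the subspaces corresponding to the zero corners of the canonical matrices $\mathcal A_i$, $\mathcal B_i$, and your dimension count (half of each Jordan block, the $k$-dimensional ``long half'' of each $(2k-1)$-dimensional Kronecker block) is exactly the bookkeeping the paper leaves implicit. The only cosmetic difference is that for Jordan blocks you use the upper-left zero corner while the paper refers to the lower-right one; both corners vanish there, so this changes nothing.
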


\begin{proof}
The proof is evident:  as such a subspace $U$ one can take the direct sum of the subspaces related to the right lower zero blocks of  the submatrices  $\mathcal A_i$ and  $\mathcal B_i$ in the Jordan-Kronecker decomposition.
\end{proof}

In fact, this result gives an algebraic explanation of the role which compatible Poisson brackets play  in the theory of completely integrable systems: an analog of  a bi-Lagrangian subspace is just a family of functions in bi-involution. 
In particular, Corollary \ref{cor1} can be understood as an algebraic counterpart for the generalised argument shift conjecture.  By using the results of F.-J.~Turiel \cite{turiel}, \cite{tur3}  on the local classification of  compatible Poisson brackets, one can show that a local version of this conjecture holds true if we replace polynomials by local analytic functions (see also paper by P.~Olver \cite{olver}).  The problem is to ``extend'' these local functions onto the whole space  $\goth g^*$,  more precisely, to ``make them'' into polynomials. Turiel's construction uses arguments from local differential geometry which do not guarantee any kind of ``polynomiality''.

\medskip

Let us give some more straightforward corollaries of Theorem \ref{JKD} playing an important role in the theory of bi-Hamiltonian systems.

\begin{Cor}\label{cor2}\quad

\begin{enumerate}
\item The subspace $L= \sum_{\lambda\notin \Lambda} \Ker \mathcal A_\lambda$ is bi-isotropic, i.e., isotropic w.r.t. all forms $\mathcal A_\lambda \in \mathcal P$. 

\item  $L$ is contained in every bi-Lagrangian subspace $U$. Moreover, $L$ can be characterised as the intersection of all bi-Lagrangian subspaces.

\item $\dim L =  \sum_{i=1}^s k_i$, where $k_1, \dots, k_s$  are the Kronecker indices of $\mathcal P$.
\end{enumerate}
\end{Cor}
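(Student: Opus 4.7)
The plan is to reduce everything to a blockwise analysis using the Jordan--Kronecker decomposition of Theorem~\ref{JKD}. Write $V=\bigoplus_{i} V_i$ with each $V_i$ a Jordan or Kronecker block of the canonical form. Since every $\mathcal A_\lambda$ is block-diagonal in this decomposition, one has $\Ker \mathcal A_\lambda=\bigoplus_i\Ker (\mathcal A_\lambda|_{V_i})$, and isotropy can be tested blockwise.

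On a Jordan block with characteristic number $\lambda_j$, the form $\mathcal A_\lambda|_{V_i}$ has corner $J(\lambda_j)-\lambda\cdot\mathrm{Id}$, which is invertible for every $\lambda\notin\Lambda$; hence $\mathcal A_\lambda|_{V_i}$ is non-degenerate and contributes $0$ to $L$. On a Kronecker block of indices $k_i$, a direct computation in the canonical basis shows that $\Ker(\mathcal A_\lambda|_{V_i})$ is one-dimensional and sits entirely in the ``lower'' $k_i$-dimensional subspace $W_i$ corresponding to the bottom-right zero corner of $\mathcal A_i$ and $\mathcal B_i$; its generating vector is polynomial in $\lambda$. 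A Vandermonde argument then shows that as $\lambda$ ranges over the infinite set $\mathbb C\setminus\Lambda$, these one-dimensional kernels span all of $W_i$. This gives $L=\bigoplus_{\text{Kronecker }i}W_i$, so $\dim L=\sum k_i$, proving~(3); since each $W_i$ lives inside the zero corner of both $\mathcal A_i$ and $\mathcal B_i$, and different blocks are mutually $\mathcal A_\lambda$-orthogonal, (1) follows immediately.

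For (2), I would first show $L\subset U$ for every bi-Lagrangian $U$. For $\lambda\notin\Lambda$ one has $\corank \mathcal A_\lambda=\corank\mathcal P$, so $\dim U=\tfrac12(\dim V+\corank\mathcal A_\lambda)$ matches the dimension of a maximal isotropic subspace of the single form $\mathcal A_\lambda$; hence $U$ is maximal isotropic for $\mathcal A_\lambda$, and any such subspace must contain $\Ker\mathcal A_\lambda$ (otherwise $U+\Ker\mathcal A_\lambda$ would be a strictly larger isotropic subspace). Summing over $\lambda$ gives $L\subset U$. For the reverse inclusion, I exhibit two bi-Lagrangians whose intersection is exactly $L$: for each Jordan block, let $W_i^+$ and $W_i^-$ be its ``upper'' and ``lower'' halves in the canonical basis, each isotropic for both $\mathcal A$ and $\mathcal B$ by the shape of the Jordan matrices. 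Setting
$$
U_\pm=L\oplus\bigoplus_{\text{Jordan }i}W_i^\pm,
$$
a dimension count confirms both are bi-Lagrangian, and $W_i^+\cap W_i^-=0$ yields $U_+\cap U_-=L$.

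The main technical point is the Kronecker-block computation together with the Vandermonde-type argument showing that the 1-dimensional kernels sweep out all of $W_i$ as $\lambda$ varies---this is where the Kronecker indices $k_i$ explicitly enter the dimension count for $L$. Every subsequent step---block orthogonality, the standard ``maximal isotropic contains the radical'' lemma, and the explicit bi-Lagrangians $U_\pm$---is essentially formal once the block picture is in hand.
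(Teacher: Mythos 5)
Your proof is correct, and it follows exactly the route the paper intends: the corollary is stated there without proof as a direct consequence of Theorem~\ref{JKD}, and your blockwise computation (non-degeneracy of $\mathcal A_\lambda$ on Jordan blocks for $\lambda\notin\Lambda$, the one-dimensional kernel in the lower corner of each Kronecker block swept out by a Vandermonde argument, and the two bi-Lagrangians $U_\pm$ built from the halves of the Jordan blocks) is the standard verification. No gaps.
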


The subspace $L$ admits another useful description.  Assume that $\mathcal B$ is a regular form in $\mathcal P$ and  $v^0_1,\dots, v^0_s$ is a basis of $\Ker \mathcal B$. Consider the following recursion relations:
$$
\begin{aligned}
\mathcal A v_i^1 &= \mathcal B v_i^0,\\
\mathcal A v_i^2 &= \mathcal B v_i^1,\\
&\dots\\
\mathcal A v_i^{k} &= \mathcal B v_i^{k-1},\\
&\dots
\end{aligned}
$$   
It follows immediately from Theorem \ref{JKD} that these  relations are consistent in the following strong sense: if we have chosen some $v_i^0, \dots , v_i^{k}$ satisfying the first $k$ relations (this choice is not unique), then there is $v_i^{k+1}$ that satisfies the $(k+1)$'s relation,  so that we can continue this chain up to infinity starting from any step.

\begin{Cor}
\label{coradd}
$L$ is  the span of all vectors $v_i^k$, $i=1,\dots,s$, $k\ge 0$.
\end{Cor}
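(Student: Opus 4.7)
The plan is to exploit the Jordan--Kronecker decomposition from Theorem \ref{JKD} and reduce to a block-by-block verification. Since $\mathcal A$ and $\mathcal B$ preserve the block decomposition $V = \bigoplus V_j$, every object in the statement---$\Ker \mathcal B$, an image $\mathcal B v$, any solution $v'$ of $\mathcal A v' = \mathcal B v$, and $\Ker \mathcal A_\lambda$ for $\lambda\notin\Lambda$---can be analysed one block at a time. As usual, I would arrange by a harmless shift $\mathcal B\mapsto \mathcal B+\mu\mathcal A$ that no Jordan block at $\infty$ appears, so that $\mathcal B$ is regular in the pencil and $\Ker\mathcal B$ has the minimal dimension $s=\corank\mathcal P$.

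First I would show that both $L$ and the span of the recursion vectors live entirely in the Kronecker part. On any Jordan block $\mathcal A_\lambda$ is non-degenerate for every $\lambda\notin\Lambda$, so $\Ker \mathcal A_\lambda$, and hence $L$, has no Jordan component. Similarly $\mathcal B$ is invertible on every (finite) Jordan block, so the basis vectors $v_i^0$ of $\Ker\mathcal B$ have no Jordan component, and inductively neither do the $v_i^k$: since $\mathcal B v_i^k$ is supported in the Kronecker part and $\mathcal A$ is invertible on each Jordan block, the Jordan component of $v_i^{k+1}$ is forced to vanish up to an element of $\Ker \mathcal A$, which on a Jordan block is also zero.

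Next I would carry out the explicit calculation on a single Kronecker block of size $2k_j-1$, working in the canonical basis from Theorem \ref{JKD}. There $\Ker\mathcal B$ is one-dimensional, spanned by the first basis vector $v^0$ of the ``long'' half, and a direct computation shows that the chain $v^0, v^1, v^2, \dots$ can be chosen to run backwards through all $k_j$ vectors of that half; call this subspace $U_j$. Alternative choices differ only by elements of $\Ker\mathcal A$, which on a Kronecker block is itself one-dimensional and contained in $U_j$, so the span is unchanged. On the other hand $\Ker\mathcal A_\lambda$ is one-dimensional in the block, contained in $U_j$, and its coordinates with respect to the canonical basis depend on $\lambda$ in a Vandermonde-like manner; letting $\lambda$ range over any cofinite subset of $\mathbb C$ therefore recovers all of $U_j$. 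This gives $L\cap V_j = U_j = \mathrm{span}\{v_i^k\}\cap V_j$ on each Kronecker block.

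The main bookkeeping obstacle I anticipate is the non-uniqueness of $v_i^{k+1}$: at each step it is defined only modulo $\Ker\mathcal A$. The remedy is precisely the observation above: on each invariant summand $\Ker\mathcal A$ is contained in the candidate subspace ($U_j$ on Kronecker blocks, $0$ on Jordan blocks), so the ambiguity never enlarges the span of the chain and the block-by-block argument glues cleanly. Summing over blocks then yields $L=\bigoplus_j U_j=\mathrm{span}\{v_i^k\}$, which is exactly the assertion of Corollary \ref{coradd}; the total dimension $\sum_j k_j$ agrees with Corollary \ref{cor2}(3) as a sanity check.
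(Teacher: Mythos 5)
Your block-by-block strategy is the natural one here (the paper gives no proof of this corollary, presenting it as an immediate consequence of Theorem \ref{JKD}), and the core computation on a single Kronecker block --- the Vandermonde argument showing that the kernels $\Ker \mathcal A_\lambda$, $\lambda\notin\Lambda$, span the $k_j$-dimensional half $U_j$, and the chain running backwards through that same half --- is correct and is the heart of the matter. The step that fails as written is the assertion that $\Ker \mathcal A$ vanishes on every Jordan block. Your normalisation $\mathcal B\mapsto\mathcal B+\mu\mathcal A$ only makes $\mathcal B$ regular in the pencil, i.e.\ removes $\infty$ from $\Lambda$; it does not make $\mathcal A$ regular. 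If $0\in\Lambda$, then on each Jordan block with eigenvalue $0$ the form $\mathcal A$ has a two-dimensional kernel (spanned by $e_m$ and $f_1$ in the canonical basis of a block of size $2m$), so both your inductive claim that the $v_i^k$ have no Jordan component and your ``remedy'' for the non-uniqueness ($\Ker\mathcal A$ contained in the candidate subspace, ``$0$ on Jordan blocks'') break down: a perfectly admissible choice of $v_i^{k+1}$ can acquire a nonzero component in a $0$-Jordan block. You cannot repair this by shifting $\mathcal A$ as well, since that changes the recursion defining the $v_i^k$.

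The conclusion is still true, but it needs an argument in place of the false assertion. On a $0$-Jordan block of size $2m$ one checks that $\mathcal A v'=\mathcal B v$ is solvable within the block only if the $e_1$- and $f_m$-coefficients of $v$ vanish, and that solving it shifts the $e$-indices down and the $f$-indices up by one; consequently a nonzero component in $\Ker(\mathcal A|_{J_0})=\spann\{e_m,f_1\}$ introduced at any step forces the chain to terminate after at most $m-1$ further steps. Since the $v_i^k$ belong to chains that continue indefinitely, their components in the $0$-Jordan blocks must all vanish, and your argument goes through. A smaller point of phrasing: ``alternative choices \dots\ so the span is unchanged'' is not literally correct --- the chain in which every kernel ambiguity is resolved by $0$ spans only $\Ker\mathcal B$ --- but what you actually use is the correct two-sided statement that no admissible choice leaves $\bigoplus_j U_j$ while some choice fills it, and that is exactly what the corollary (read as the span of all admissible $v_i^k$ over all choices of chain) requires.
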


The next corollary gives a description of Kronecker pencils (i.e., with no Jordan blocks).

\begin{Cor}\label{cor3}
The following statements are equivalent:
\begin{enumerate}
\item $\mathcal P$ is of Kronecker type, i.e., the JK decomposition\footnote{Sometimes we use {\it JK} as abbreviation of {\it Jordan-Kronecker}.} of $\mathcal P$ has no Jordan blocks;
\item $\rank \mathcal A_\lambda = \rank \mathcal P$ for all $\lambda\in \bar{\mathbb C}$, i.e., $\Lambda = \emptyset$;
\item the subspace $L= \sum_{\lambda\notin \Lambda} \Ker \mathcal A_\lambda$ is bi-Lagrangian;
\item a bi-Lagrangian subspace is unique.
\end{enumerate}
\end{Cor}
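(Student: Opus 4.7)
My plan is to prove the four-way equivalence by establishing (1)$\Leftrightarrow$(2), (1)$\Leftrightarrow$(3) and (3)$\Leftrightarrow$(4), in each case reducing the question to a block-wise computation in the Jordan--Kronecker canonical form of Theorem \ref{JKD} together with the information on $L$ supplied by Corollary \ref{cor2}. The whole argument is essentially a dimension count backed up by an elementary rank calculation in each block.

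For (1)$\Leftrightarrow$(2) I would inspect $\rank \mathcal{A}_\lambda$ block by block. In a Jordan block of size $2n_i$ with finite characteristic number $\lambda_i$, the matrix $\mathcal{A}_\lambda$ is built from $J(\lambda_i)-\lambda I$, which is invertible for $\lambda\ne\lambda_i$ and has rank $n_i-1$ at $\lambda=\lambda_i$; the whole block therefore has rank $2n_i$ off $\lambda_i$ and $2n_i-2$ at it. A Kronecker block of size $2k_i-1$ has $\rank\mathcal{A}_\lambda = 2k_i-2$ for every $\lambda\in\bar{\mathbb C}$, as the staircase structure of $\mathcal A_i+\lambda\mathcal B_i$ keeps its maximal rank for all $\lambda$. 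Summing over blocks, $\rank\mathcal{A}_\lambda$ falls below $\rank\mathcal P$ exactly at the Jordan eigenvalues, so $\Lambda=\emptyset$ is equivalent to the absence of Jordan blocks.

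For (1)$\Leftrightarrow$(3) I would perform a single dimension count. Reading off the canonical form, $\dim V=\sum_{\text{Jord}}2n_i+\sum_{\text{Kron}}(2k_i-1)$ and $\cork\mathcal P=s$, the number of Kronecker blocks. The common dimension of every bi-Lagrangian subspace is therefore
$$
\tfrac12(\dim V+\cork\mathcal P) \;=\; \sum_{\text{Jord}} n_i \;+\; \sum_{\text{Kron}} k_i,
$$
whereas Corollary \ref{cor2}(3) gives $\dim L=\sum_{\text{Kron}} k_i$ and Corollary \ref{cor2}(1) guarantees that $L$ is already bi-isotropic. Hence $L$ is bi-Lagrangian if and only if $\sum_{\text{Jord}} n_i=0$, i.e.\ no Jordan block occurs.

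Finally, (3)$\Leftrightarrow$(4) is a direct consequence of Corollary \ref{cor2}(2). If $L$ is bi-Lagrangian and $U$ is any bi-Lagrangian subspace, then $L\subseteq U$ with both of the same dimension, forcing $U=L$ and hence uniqueness. Conversely, uniqueness of a bi-Lagrangian $U$ combined with the identity $L=\bigcap U$ forces $L=U$, so $L$ itself is bi-Lagrangian. I do not expect a real obstacle once Theorem \ref{JKD} and Corollary \ref{cor2} are available: the corollary reduces to the rank and dimension bookkeeping sketched above.
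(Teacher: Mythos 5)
Your proof is correct and is exactly the block-by-block rank and dimension bookkeeping that the paper leaves implicit (Corollary \ref{cor3} is stated there without proof as a direct consequence of Theorem \ref{JKD} and Corollary \ref{cor2}). The three computations — rank drop of $2$ per Jordan block at its characteristic number, constant rank $2k_i-2$ of each Kronecker block, and the count $\dim L=\sum k_i$ versus $\frac12(\dim V+\cork\mathcal P)=\sum n_i+\sum k_i$ — together with the minimality/intersection property of $L$ for (3)$\Leftrightarrow$(4), are precisely what is needed.
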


The following statement allows us to compute the number of Jordan blocks (both trivial, i.e., of size $2\times 2$, and non-trivial) for each characteristic number.

\begin{Cor}\label{cor4}
Let $\mathcal P = \{ \mathcal A + \lambda \mathcal B \}$  and $\mu\ne 0$ be a characteristic number. Then 
\begin{enumerate}
\item $\corank \left( \mathcal A|_{\mathcal \Ker (\mathcal A+\mu \mathcal B)}\right) \ge \corank \mathcal P$;
\item $\corank \left( \mathcal A|_{\mathcal \Ker (\mathcal A+\mu \mathcal B)}\right) = \corank \mathcal P$ iff  the Jordan $\mu$-blocks are all trivial;
\item the number of all Jordan $\mu$-blocks is equal to
$$
\frac{1}{2} \bigl(\dim\Ker (\mathcal A+\mu \mathcal B) -
\corank\mathcal P\bigr);
$$
\item the number of non-trivial Jordan $\mu$-blocks is equal to
$$
\frac{1}{2} \left(\corank \left( \mathcal A|_{\mathcal \Ker (\mathcal A+\mu \mathcal B)}\right) -
\corank\mathcal P\right).
$$ 
\end{enumerate}
\end{Cor}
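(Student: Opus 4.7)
The plan is to reduce everything to the Jordan--Kronecker canonical form of Theorem~\ref{JKD} and then carry out a short block-by-block computation. The key observation is that both $\dim\Ker(\mathcal A+\mu\mathcal B)$ and $\corank\bigl(\mathcal A|_{\Ker(\mathcal A+\mu\mathcal B)}\bigr)$ are basis-invariant and additive over JK blocks: since $\mathcal A+\mu\mathcal B$ is block-diagonal, its kernel decomposes as the direct sum of the kernels of the individual blocks $\mathcal A_i+\mu\mathcal B_i$, and the restriction of $\mathcal A$ to this direct sum is again block-diagonal. It therefore suffices to tabulate, for each block type, the contributions to $\dim\Ker$ and to $\corank(\mathcal A|_{\Ker})$, and then sum.

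The per-block contributions are read off directly from the explicit canonical matrices. A Kronecker block of size $2k_i-1$ has a $1$-dimensional kernel for every value of the spectral parameter, and any skew form on a line vanishes; hence its contribution is $(1,1)$. A Jordan $\lambda_i$-block with $\lambda_i\ne\mu$ has $J(\lambda_i)-\mu\,\mathrm{Id}$ invertible, so the block contributes $(0,0)$. For a Jordan $\mu$-block of size $2m$, the matrix $\mathcal A_i+\mu\mathcal B_i$ has the nilpotent $m\times m$ block $J(0)$ in its off-diagonal position, giving a $2$-dimensional kernel spanned by $v_1=(e_m,0)$ and $v_2=(0,e_1)$, where $e_1$ and $e_m$ generate $\Ker J(0)$ and $\Ker J^\top(0)$ respectively. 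A direct computation yields $\mathcal A_i(v_1,v_2)=\mu\,\delta_{m,1}$. Therefore, since $\mu\ne 0$, the trivial Jordan $\mu$-block ($m=1$) contributes $(2,0)$ and every non-trivial one ($m\ge 2$) contributes $(2,2)$.

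Summing over all blocks and recalling that $\corank\mathcal P$ equals the total number of Kronecker blocks, one obtains
$$
\dim\Ker(\mathcal A+\mu\mathcal B)=\corank\mathcal P+2N_\mu,\qquad
\corank\bigl(\mathcal A|_{\Ker(\mathcal A+\mu\mathcal B)}\bigr)=\corank\mathcal P+2N_\mu^{\mathrm{nt}},
$$
where $N_\mu$ and $N_\mu^{\mathrm{nt}}$ denote the total number of Jordan $\mu$-blocks and the number of non-trivial ones. Identities (3) and (4) follow immediately, and parts (1) and (2) are direct consequences of (4), since $N_\mu^{\mathrm{nt}}\ge 0$ with equality iff every Jordan $\mu$-block is trivial.

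The only step requiring genuine attention is the calculation $\mathcal A_i(v_1,v_2)=\mu\,\delta_{m,1}$ on the Jordan $\mu$-block. This is where the hypothesis $\mu\ne 0$ enters: it guarantees that the $2\times 2$ matrix $\mathcal A_i=\begin{pmatrix}0&\mu\\-\mu&0\end{pmatrix}$ attached to a trivial Jordan $\mu$-block is non-degenerate, which is precisely what distinguishes the trivial case from the non-trivial one in the corollary.
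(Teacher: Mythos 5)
Your proof is correct, and it follows exactly the route the paper intends: Corollary \ref{cor4} is stated there as an unproved ``straightforward corollary'' of the canonical form of Theorem \ref{JKD}, and the block-by-block bookkeeping of the contributions $(1,1)$, $(0,0)$, $(2,0)$, $(2,2)$ to $\bigl(\dim\Ker(\mathcal A+\mu\mathcal B),\ \corank(\mathcal A|_{\Ker(\mathcal A+\mu\mathcal B)})\bigr)$ is precisely the intended argument. The key computation $\mathcal A_i(v_1,v_2)=\mu\,\delta_{m,1}$, which isolates where the hypothesis $\mu\ne 0$ is used, checks out.
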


These purely algebraic and elementary results have natural analogs (in fact, direct implications) in the theory of integrable systems.  Here is a kind of dictionary that allows to translate ``linear algebra'' to ``Poisson geometry'':

\medskip

\begin{tabular}{lcl}
skew-symmetric form & $\longleftrightarrow$ & Poisson structure \\
kernel of a skew-symmetric form & $\longleftrightarrow$ & Casimir functions \\
pencil of skew-symmetric forms & $\longleftrightarrow$ & compatible Poisson brackets \\
isotropic subspace & $\longleftrightarrow$ & family of commuting functions \\
maximal isotropic subspace & $\longleftrightarrow$ & integrable system \\
bi-Lagrangian subspace & $\longleftrightarrow$ & functions in bi-involution
\end{tabular}

\medskip

Understanding this relationship allows us not only to interpret, but also to prove many important facts related to compatible Poisson structures and bi-Hamiltonian systems.
For example, 
the argument shift method (part 1 of Theorem \ref{MFth}) is just a reformulation of item 1 of Corollary \ref{cor2} in terms of the compatible Poisson brackets \eqref{LiePoisson} and \eqref{abracket} on the dual space $\goth g^*$.  
The passage from the classical shifts $f(x+\lambda a)$ to the canonical algebra of polynomial shifts $\mathcal F_a$ is equivalent to the interpretation of the subspace  $L$  given by Corollary \ref{coradd}   (here  the family (algebra) of shifts $\mathcal F_a$ itself corresponds to $L$). 
The  reformulation of the generalised argument shift conjecture given at the end of Section 4 becomes immediately clear, if we compare it with item 2 of Corollary \ref{cor2}. 

In fact, the main idea of this paper is just to use this relationship in a systematic way for compatible  Poisson brackets \eqref{LiePoisson} and \eqref{abracket} on the dual space $\goth g^*$ in order to get some information about the Lie algebra $\goth g$ itself and its coadjoint representation.

We are not going to give detailed proofs of the results presented below.  Instead, we will give a reference to one of the algebraic results discussed above, from which the desired fact immediately follows.  For a reader who is not familiar with this ``linear algebra $\longleftrightarrow$ Poisson geometry'', we should, perhaps, explain from the very beginning how deep this relationship is.
A Poisson structure, as an object of differential geometry,  can be considered up to a certain order of  approximation.  The above relationship is just of zero order.  But even this leads to non-trivial results, as behind ``linear algebra''  there is always a ``compatibility condition''\footnote{This condition is of the first order  (and non-linear!) and says that $\mathcal A+ \lambda\mathcal B$ is Poisson  for each $\lambda$.}  which is highly non-trivial and is responsible for many things that cannot be even seen on the level of ``linear algebra''.   For example,  one can naturally ask the following question: how the Jordan--Kronecker decomposition  could help in the theory of Lie algebras, if it does not know anything about the Jacobi identity?  The answer is very simple:  Jacobi identity is hidden in the compatibility condition for brackets  \eqref{LiePoisson} and \eqref{abracket}.  Thus, the point is that the Jordan--Kronecker decomposition indeed contains a lot of useful information,  but to get any non-trivial conclusion  from it, we need something extra, and this ``extra'' is basically hidden in the compatibility condition.

\section{Definiton of Jordan--Kronecker invariants}

Let $\mathfrak g$ be a Lie algebra and $\mathfrak g^*$ be its dual space. Consider $x,a\in\goth g^*$ and the corresponding skew-symmetric forms $\mathcal A_x=\Bigl( \sum c_{ij}^k x_k \Bigr)$ and $\mathcal A_a=\Bigl( \sum c_{ij}^k a_k \Bigr)$. The Jordan--Kronecker decomposition of the pencil $\{\mathcal A_x + \lambda \mathcal A_a\}$ essentially depends on the choice of $x$ and $a$.   However,  for almost all pairs $(x,a)$, the algebraic type of this pencil is the same.

We will say that $(x,a)\in \mathfrak g^*\times \mathfrak g^*$ is a {\it generic pair}  if the algebraic type of the Jordan--Kronecker decomposition  for $\{\mathcal A_x + \lambda \mathcal A_a\}$ remains unchanged under a small variation of both $x$ and $a$. The pencil $\{\mathcal A_{x} + \lambda \mathcal A_{a}\}$  in this case is called {\it generic} too. Clearly, the set of all generic pairs $(x,a)$ is Zariski open non-empty subset of $\mathfrak g^*\times \mathfrak g^*$.

\begin{definition}\rm{
The algebraic type of  the Jordan--Kronecker canonical form for a generic pencil $\mathcal A_x + \lambda \mathcal A_a$  is called the {\it Jordan--Kronecker invariant} of $\mathfrak g$.}
\end{definition}

Here by the algebraic type of a JK canonical form we mean the number and sizes of Kronecker  and Jordan (of course, separately for each characteristic number) blocks. 

In particular,  we will say that a Lie algebra $\goth g$  is of
\begin{itemize}
\item Kronecker type,
\item Jordan (symplectic) type,
\item  mixed type,
\end{itemize}
if the Jordan--Kronecker decomposition for the generic pencil $\mathcal A_x + \lambda \mathcal A_a$ consists of 
\begin{itemize}
\item only Kronecker blocks,
\item only Jordan blocks,
\item both of Jordan and Kronecker blocks
\end{itemize}
respectively.

Following the same idea we give two more definitions.

\begin{definition}{\rm
The Kronecker indices of a generic pencil  $\mathcal A_x + \lambda \mathcal A_a$ are called {\it Kronecker indices} of $\goth g$.}
\end{definition}

\begin{definition}\rm{ The characteristic numbers of a generic pencil
 $\mathcal A_x + \lambda \mathcal A_a$ are called {\it characteristic numbers $\lambda_i$} of $\mathfrak g$.}
\end{definition}

Notice that in a neighbourhood of a generic pair $(x,a)\in \goth g^*\times\goth g^*$, these characteristic numbers are analytic functions of $x$ and $a$:
$$
\lambda_i = \lambda_i(x,a).
$$

\section{Basic properties of JK invariants}

The next two theorems easily follow from the definition of JK invariants and give characterisation of Lie algebras of Kronecker and Jordan types respectively.

\begin{theorem}
\label{kroncase}
The following properties of a Lie algebra $\goth g$ are equivalent:
\begin{enumerate}
\item $\goth g$ is of Kronecker type, i.e. the Jordan--Kronecker decomposition of a generic pencil  $\mathcal A_x + \lambda \mathcal A_a$ consists only of Kronecker blocks,
\item $\codim \Sing  \ge 2$,  where
$$
\Sing  =\{  y\in \goth g^*~|~   \dim\Ann y > \ind\goth g   \} \subset \goth g^*
$$
is the singular set of $\goth g^*$,
\item  the algebra of shifts $\mathcal F_a$ is complete.
\end{enumerate}

\end{theorem}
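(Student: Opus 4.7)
The plan is to establish (1) $\Leftrightarrow$ (2) and (1) $\Leftrightarrow$ (3) separately, in each case translating the property of the generic pencil $\{\mathcal A_x + \lambda \mathcal A_a\}$ into a Lie-algebraic statement via the ``dictionary'' of Section 5.

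For (1) $\Leftrightarrow$ (2), the key observation is the linearity $\mathcal A_x+\lambda\mathcal A_a = \mathcal A_{x+\lambda a}$ of the Poisson tensor in $y\in\goth g^*$. Hence a scalar $\mu$ is a characteristic number of the pencil precisely when $\cork \mathcal A_{x+\mu a}>\ind\goth g$, i.e., when $x+\mu a\in\Sing$. By Corollary \ref{cor3}, $\goth g$ is of Kronecker type iff for generic $(x,a)$ the set of characteristic numbers is empty, equivalently the affine line $\ell_{x,a}=\{x+\lambda a:\lambda\in\mathbb C\}$ misses $\Sing$ (the ``point at infinity'' is automatically avoided since $a$ is regular). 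Applying a standard incidence count to the map $\goth g^*\times\goth g^*\times\mathbb C\to\goth g^*$, $(x,a,\lambda)\mapsto x+\lambda a$, one sees that the locus of pairs $(x,a)$ with $\ell_{x,a}\cap\Sing\ne\emptyset$ is a constructible subset of $\goth g^*\times\goth g^*$ of codimension exactly $\codim\Sing-1$. Hence a generic line misses $\Sing$ precisely when $\codim\Sing\ge 2$.

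For (1) $\Leftrightarrow$ (3), completeness of $\mathcal F_a$ is equivalent to the statement that at a generic point $x\in\goth g^*$,
$$
\dim\,\mathrm{span}\{df(x):f\in\mathcal F_a\}\ =\ \tfrac12(\dim\goth g+\ind\goth g),
$$
since the transcendence degree of a polynomial subalgebra equals the generic rank of its differential map. The key identification is that this span coincides with the bi-isotropic subspace $L(x,a)=\sum_{\lambda\notin\Lambda}\Ker\mathcal A_{x+\lambda a}$ of Corollary \ref{cor2}. Indeed, expanding the Casimir identity $(\mathcal A_a+\lambda\mathcal A_x)\,df_i(a+\lambda x)=0$ in powers of $\lambda$ and using the chain-rule relation $\lambda\,df_i(a+\lambda x)=\sum_{m\ge 0}\lambda^{m+1}df_i^{m+1}(x)$, one obtains
$$
\mathcal A_a\,df_i^{1}(x)=0,\qquad \mathcal A_a\,df_i^{m+1}(x)+\mathcal A_x\,df_i^{m}(x)=0\ \ (m\ge 1),
$$
which (up to signs) are the recursion relations of Corollary \ref{coradd} for the pencil at hand. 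Since the starting vectors $df_i^1(x)=df_i(a)$, $i=1,\dots,\ind\goth g$, form by construction a basis of $\Ker\mathcal A_a$, the resulting chains collectively span $L(x,a)$. Hence the transcendence degree of $\mathcal F_a$ equals $\dim L(x,a)=\sum k_i$; by Corollary \ref{cor3}, this coincides with $\tfrac12(\dim\goth g+\ind\goth g)$ iff the generic pencil is of Kronecker type.

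The principal technical obstacle is the ``spanning'' half of the identification above, namely, that the differentials of the canonical generators of $\mathcal F_a$ produce all of $L$ and not merely a proper subspace. This rests on the strong consistency of the recursion provided by Theorem \ref{JKD} (every initial vector in $\Ker\mathcal A_a$ admits a full infinite chain) together with the specific Kronecker-block structure, in which a single initial vector generates the entire $k_i$-dimensional part of $L$ in that block; one must also check that the canonicality of $\mathcal F_a$ (independence of the choice of $f_1,\dots,f_s$) makes the span intrinsic. A secondary subtlety in (1) $\Leftrightarrow$ (2) is the handling of a potential characteristic number at $\lambda=\infty$, which corresponds to $a\in\Sing$ and is automatically excluded for generic $a$; the remainder of the incidence/codimension argument is routine.
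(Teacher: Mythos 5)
Your proposal is correct and follows essentially the same route as the paper's own (sketched) proof: the equivalence $(1)\Leftrightarrow(2)$ via the observation that characteristic numbers are the intersections of the line $x+\lambda a$ with $\Sing$ together with Corollary \ref{cor3}, and $(1)\Leftrightarrow(3)$ via the identification of the span of the differentials of $\mathcal F_a$ with the subspace $L$ of Corollaries \ref{cor2}--\ref{coradd}. Your write-up merely fills in details (the incidence count for generic lines, the explicit recursion from the Casimir identity) that the paper delegates to \cite{Bols1} and \cite{bz}.
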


\begin{proof}
This theorem is, in fact,  the main result of \cite{Bols1} .
We give a sketch of  proof (see details in \cite{Bols1} and, in a more general case, \cite{bz}).   A generic pencil $\mathcal A_x + \lambda \mathcal A_a$ is Kronecker,  if and only iff the rank of $\mathcal A_x + \lambda \mathcal A_a=\mathcal A_{x+\lambda a}$  is maximal for all $\lambda$ (Corollary \ref{cor3}), i.e., a generic line $x+\lambda a$ does not intersect the singular set $\Sing $.  This is obviously equivalent to the condition $\codim \Sing  \ge 2$.  The equivalence of 1 and 3 follows directly from Corollary \ref{cor3}  (see items 1 and 3) if we take into account the fact that the differentials of shifts $f\in\mathcal F_a$ at the point $x$ generate the subspace $L=\sum_{\lambda\notin\Lambda} \Ker (\mathcal A_{x}+\lambda \mathcal A_{a})$ (Corollary \ref{coradd}).
\end{proof}

Notice that for Lie algebras of Kronecker type, the generalised argument shift conjecture holds true automatically as the family of shifts $\mathcal F_a$  itself is complete and in bi-involution.  Examples of such Lie algebras include, first of all, semisimple Lie algebras \cite{MischFom} and semiderect sums $\goth g +_\rho V$, where $\goth g$ is simple, $V$ is Abelian  and $\rho: \goth g \to \mathrm{gl}(V)$ is irreducible \cite{bolsActa}, \cite{priwitzer}, \cite{KnopLittl} (see Section \ref{examples}).

\medskip 

The next theorem is obvious and can be viewed as an interpretation of the notion of a Frobenius Lie algebra (\cite{ElashFrob},  \cite{Ooms})  in terms of Jordan--Kronecker invariants.

\begin{theorem}
\label{jordcase}
The following properties of a Lie algebra $\goth g$ are equivalent:
\begin{enumerate}
\item $\goth g$ is of Jordan type, i.e., the Jordan-Kronecker decomposition for a generic pencil  $\mathcal A_x + \lambda \mathcal A_a$ consists only of Jordan blocks.
\item a generic form $\mathcal A_x$ is non-degenerate, i.e.,
$\ind \goth g=0$  and $\goth g$ is Frobenius,
\item the algebra of shifts $\mathcal F_a$ is trivial, i.e., $\mathcal F_a=\mathbb C$.
\end{enumerate}
\end{theorem}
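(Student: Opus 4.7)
The theorem is essentially a bookkeeping exercise combining the definition of index, Corollary \ref{cor3}, and Corollary \ref{coradd}; there is no serious obstacle, only a careful matching of objects. The plan is to prove the cycle $(1)\Leftrightarrow(2)$ and $(1)\Leftrightarrow(3)$ separately.

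For $(1)\Leftrightarrow(2)$, the key observation is that the number of Kronecker blocks in the Jordan--Kronecker decomposition of any pencil equals its corank (as noted right before Corollary \ref{cor2}). For a generic pair $(x,a)$ both forms $\mathcal A_x$ and $\mathcal A_a$ have corank $\ind\goth g$ (they avoid $\Sing$), so they are regular in the pencil $\mathcal P=\{\mathcal A_x+\lambda\mathcal A_a\}$, and hence $\corank \mathcal P=\ind\goth g$. Thus the generic pencil has no Kronecker block if and only if $\ind\goth g=0$, i.e.\ a generic $\mathcal A_x$ is non-degenerate and $\goth g$ is Frobenius.

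For $(2)\Rightarrow(3)$, the definition of $\mathcal F_a$ given in \eqref{generators} provides generators $f_i^k$ indexed by $i=1,\dots,\ind\goth g$ and $k>0$. If $\ind\goth g=0$, there are no generators at all, so $\mathcal F_a=\mathbb C$.

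For $(3)\Rightarrow(1)$, I would invoke the interpretation of $\mathcal F_a$ via Corollary \ref{coradd} already used in the proof of Theorem \ref{kroncase}: at a generic point $x\in\goth g^*$, the differentials $\{df(x)\mid f\in\mathcal F_a\}$ span the subspace
\[
L=\sum_{\lambda\notin\Lambda}\Ker(\mathcal A_x+\lambda\mathcal A_a)\subset\goth g.
\]
If $\mathcal F_a=\mathbb C$ then every such differential vanishes, hence $L=\{0\}$. By Corollary \ref{cor2}(3), $\dim L=\sum_{i=1}^s k_i$ with each Kronecker index $k_i\geq 1$, so $L=\{0\}$ forces $s=0$. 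Thus the generic pencil $\mathcal A_x+\lambda\mathcal A_a$ contains no Kronecker blocks, i.e.\ $\goth g$ is of Jordan type, closing the cycle. The only mildly delicate point is checking that $(x,a)$ can be simultaneously chosen to be a generic pair and a point where the ``differentials span $L$'' property from Corollary \ref{coradd} applies, but both conditions define Zariski open sets whose intersection is non-empty.
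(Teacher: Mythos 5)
Your argument is correct; the paper itself gives no proof, merely declaring the theorem "obvious", and your reasoning (identifying the number of Kronecker blocks with $\corank\mathcal P=\ind\goth g$ for a generic pair, and identifying the span of differentials of $\mathcal F_a$ with the subspace $L$ of Corollary \ref{coradd}) is exactly the intended route, filling in the details the authors omit. The genericity caveat at the end is handled correctly as well.
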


\section{Kronecker blocks and Kronecker indices}

Here we focus on  Kronecker blocks and discuss  some elementary results to illustrate a relationship between JK invariants and properties of a Lie algebra~$\goth  g$.

\begin{proposition}
Let $\mathcal P = \{ \mathcal A_{x+\lambda a}\}$ be a generic pencil,  $x,a\in\goth g^*$. Then:
\begin{enumerate}
\item  the number of Kronecker blocks in the JK decomposition for $\mathcal P$ equals to the index of $\goth g$;

\item the number of trivial Kronecker blocks is greater or equal to the dimension of the centre of $\goth g$;

\item  the number of algebraically independent functions in the algebra of shifts $\mathcal F_a$ equals $\sum_{i=1}^s k_i$,  where
$k_1,\dots, k_s$  are Kronecker indices of $\goth g$,  $s=\ind\goth g$.
\end{enumerate}
\end{proposition}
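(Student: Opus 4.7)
The plan is to translate each of the three claims into a purely linear-algebraic fact about the generic pencil $\mathcal P = \{\mathcal A_x + \lambda \mathcal A_a\}$ on the vector space $\goth g$, and then read off the result from the corollaries of the Jordan--Kronecker decomposition theorem stated in Section 5. Throughout, genericity of $(x,a)$ ensures in particular that both $x$ and $a$ lie in $\goth g^*\setminus\Sing$, so that neither $0$ nor $\infty$ appears as a characteristic number of $\mathcal P$.

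For (1), the key step is to compute $\corank \mathcal P$. Since for generic $\lambda$ the covector $x+\lambda a$ avoids $\Sing$, one has $\corank \mathcal A_{x+\lambda a} = \ind\goth g$ for all but finitely many $\lambda$, hence $\corank \mathcal P = \ind\goth g$. The remark made immediately after Theorem \ref{JKD} states that the number of Kronecker blocks in the JK decomposition equals $\corank \mathcal P$, which finishes (1).

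For (2), I would identify the number of trivial Kronecker blocks with $\dim\bigl(\Ker\mathcal A_x\cap\Ker\mathcal A_a\bigr) = \dim\bigl(\Ann x\cap\Ann a\bigr)$. This is a block-by-block inspection in the JK decomposition: in each non-trivial Kronecker block the kernels of the two forms are distinct one-dimensional subspaces (intersecting trivially within the block), and in each Jordan block (whose characteristic number is neither $0$ nor $\infty$ by genericity) both $\mathcal A_i$ and $\mathcal B_i$ are nondegenerate; only trivial Kronecker blocks contribute to the common kernel, and each contributes exactly one dimension. The second step is the elementary observation that every $\xi$ in the centre $Z(\goth g)$ satisfies $\ad^*_\xi\equiv 0$ and hence lies in $\Ann y$ for every $y\in\goth g^*$; therefore $Z(\goth g)\subseteq \Ann x\cap\Ann a$, and the inequality follows.

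For (3), the plan is to invoke Corollary \ref{coradd}: the subspace $L = \sum_{\lambda\notin\Lambda}\Ker\mathcal A_{x+\lambda a}$ is spanned by the chain of vectors $v_i^k$ constructed via the recursion $\mathcal A_x v_i^{k+1} = \mathcal A_a v_i^k$ starting from a basis of $\Ker \mathcal A_a$. The decisive fact, which is already tacitly used in the proof of Theorem \ref{kroncase}, is that differentiating the identity $\ad^*_{df_i(y)}y\equiv 0$ order by order in the Taylor expansion $f_i(a+\lambda x) = \sum_k \lambda^k f_i^k(x)$ yields precisely this recursion for $df_i^k(x)$; consequently $\spann\{df_i^k(x)\mid i,k\} = L$. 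Since for any subalgebra of $P(\goth g)$ the number of algebraically independent generators equals the dimension of the span of their differentials at a generic point, the number of algebraically independent polynomials in $\mathcal F_a$ equals $\dim L$, which by Corollary \ref{cor2}(3) is $\sum_{i=1}^s k_i$. The only step that requires a little care is the matching $df_i^k(x)\leftrightarrow v_i^k$ of indices; everything else is a direct citation of results from Section~5.
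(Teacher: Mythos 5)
Your proof is correct and follows exactly the route the paper intends: the paper itself only remarks that items 1 and 2 are obvious and that item 3 follows from Corollary \ref{cor2}(3), and your arguments (corank of a generic pencil equals $\ind\goth g$, identification of trivial Kronecker blocks with $\dim(\Ann x\cap\Ann a)\supseteq Z(\goth g)$, and the identification of $\spann\{df_i^k(x)\}$ with $L$ via Corollary \ref{coradd}) are precisely the details being suppressed. No gaps.
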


Items 1 and 2 are obvious. The third statement follows from Corollary \ref{cor2}, part 3.

\medskip

It is interesting to notice that Kronecker indices give a very simple and natural estimate  
for the degrees of polynomial coadjoint invariants. This result has been recently obtained by A.\,Vorontsov.

\begin{theorem} [A.\,Vorontsov \cite{Vorontsov}]\label{Vorontsov}
Let $f_1(x), f_2(x), \dots, f_s(x)\in P(\goth g)$ be algebraically independent polynomial coadjoint invariants,  $s=\ind\goth g$,  and  $m_1\le m_2\le \dots \le m_s$  be their degrees,  $m_i=\mathrm{deg}\, f_i$. Then the following estimate holds 
\begin{equation}
\label{estim}
m_i \ge k_i,
\end{equation}
where $k_1\le k_2 \le\dots \le k_s$ are Kronecker indices of $\goth g$.
\end{theorem}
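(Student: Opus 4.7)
The plan is to combine the Taylor-expansion chain built from $f_i$ with the Jordan--Kronecker canonical form of the generic pencil $\{\mathcal A_a+\lambda\mathcal A_x\}$, reading off $m_i\ge k_i$ from a rigidity property of the leading term of each chain inside each Kronecker block.

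First, expand $df_i(a+\lambda x)=\sum_{p\ge 0}\lambda^p h_i^p(x)$, so that $h_i^0=df_i(a)$. Differentiating the Casimir identity $\mathcal A_{a+\lambda x}\,df_i(a+\lambda x)\equiv 0$ and comparing coefficients of $\lambda$ yields the recursion $\mathcal A_a h_i^{p+1}+\mathcal A_x h_i^p=0$ with $h_i^0\in\Ker\mathcal A_a$. Because $\deg f_i=m_i$, the $\lambda$-polynomial $df_i(a+\lambda x)$ has degree at most $m_i-1$, so $h_i^p\equiv 0$ for $p\ge m_i$; plugging $p=m_i$ into the recursion gives the boundary condition $h_i^{m_i-1}\in\Ker\mathcal A_x$. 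Algebraic independence of $f_1,\dots,f_s$ together with $s=\ind\goth g$ ensures that $df_1(a),\dots,df_s(a)$ form a basis of $\Ann a=\Ker\mathcal A_a$ at a generic $a$.

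Next, fix a generic pair $(x,a)$ with neither $0$ nor $\infty$ a characteristic number of the pencil, and pass to a Jordan--Kronecker basis of $\goth g$. Only the Kronecker blocks contribute to $\Ker\mathcal A_a$ and $\Ker\mathcal A_x$; in the $j$-th Kronecker block (size $2k_j-1$) take the standard basis $e_1^{(j)},\dots,e_{k_j}^{(j)},f_1^{(j)},\dots,f_{k_j-1}^{(j)}$ in which $\Ker\mathcal A_a\cap\text{block}_j=\langle e_1^{(j)}\rangle$, $\Ker\mathcal A_x\cap\text{block}_j=\langle e_{k_j}^{(j)}\rangle$, $\mathcal A_a e_r=f_{r-1}^*$ for $r\ge 2$, and $\mathcal A_x e_r=f_r^*$ for $r\le k_j-1$. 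Writing $df_i(a)=\sum_j c_{ij}\,e_1^{(j)}$ produces an $s\times s$ matrix $C=(c_{ij})$ that is invertible because both sides are bases of $\Ker\mathcal A_a$. A block-local induction then shows that the block-component of the chain must have the form $h_{ij}^p=(-1)^p c_{ij}\,e_{p+1}^{(j)}+(\text{combination of }e_1^{(j)},\dots,e_p^{(j)})$ for $p=0,1,\dots,k_j-1$; combined with $h_{ij}^{m_i-1}\in\langle e_{k_j}^{(j)}\rangle$ this forces $c_{ij}=0$ whenever $m_i<k_j$, because then $e_{m_i}^{(j)}$ and $e_{k_j}^{(j)}$ are distinct basis vectors.

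The conclusion is a pigeonhole count. If $m_{i_0}<k_{i_0}$ for some $i_0$, then for every $i\le i_0$ and every $j\ge i_0$ one has $m_i\le m_{i_0}<k_{i_0}\le k_j$, hence $c_{ij}=0$; this gives an $i_0\times(s-i_0+1)$ zero block in $C$, so the $s-i_0+1$ columns indexed by $j\ge i_0$ all live in the $(s-i_0)$-dimensional subspace cut out by $c_{1j}=\dots=c_{i_0 j}=0$ and are linearly dependent, contradicting invertibility of $C$. The main obstacle I expect is verifying the rigidity of the leading coefficient in the block-local induction -- namely, that the freedom in each recursion step, which lies only in $\langle e_1^{(j)}\rangle$, cannot perturb the $e_{p+1}^{(j)}$-component -- so that the chain advances by exactly one basis vector per step. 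Once this stability is in hand, the pigeonhole argument finishes the proof immediately.
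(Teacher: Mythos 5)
Your proof is correct. The paper gives no proof of this theorem (it defers entirely to the citation of Vorontsov), but your argument --- propagating the chain $h_i^0=df_i(a),h_i^1,\dots$ through the Kronecker blocks via the recursion $\mathcal A_a h_i^{p+1}=-\mathcal A_x h_i^p$, using the one-dimensional kernel freedom $\langle e_1^{(j)}\rangle$ per block to establish rigidity of the leading coefficient, and then the truncation $h_i^p=0$ for $p\ge m_i$ to force $c_{ij}=0$ whenever $m_i<k_j$, followed by the pigeonhole on the invertible matrix $C$ --- is exactly the chain machinery the paper itself sets up in Corollary \ref{coradd} and is the standard argument of the cited source; the only cosmetic slip is that the boundary condition $h_i^{m_i-1}\in\Ker\mathcal A_x$ comes from the recursion at $p=m_i-1$ (where $h_i^{m_i}=0$), not at $p=m_i$.
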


This theorem is related to the case when $\goth g$ admits a ``complete set'' of polynomial $\Ad^*$-invariants, i.e., the number of algebraically independent invariants is equal to the index of $\goth g$.  
However, in general  coadjoint invariants are not necessarily polynomial or even rational\,\footnote{The rationality of invariants is guaranteed by Rosenlicht's Theorem for algebraic Lie algebras.  The polynomiality follows, in particular, from the condition $[\goth g, \goth g]=\goth g$.  A Lie algebra satisfying this condition is called {\it perfect}.}.  But even in this more general case a similar estimate holds true.  We only need to replace the degree $m_i$ by another characteristic of a (local) analytic function $f$.  Namely,  consider the Taylor expansion of $f$ at a generic point $a\in \goth g^*$:
$$
f(a + \lambda x) = f_0 + \lambda f_1(x) + \lambda^2 f_2(x) + \lambda^3 f_3(x) + \dots
$$
where $f_k$ is a homogeneous polynomial in $x$ of degree $k$.  Denote by $m(f)$ the number of algebraically independent polynomials among $f_i$'s.   It is clear that  if $f$ itself is a polynomial, then $m(f) \le \deg f$.  

Thus, if $f_1,\dots, f_s$  are independent (local) analytic $\Ad^*$-invariants, 
$s=\ind\goth g$,  and  $m(f_1)\le m(f_2)\le \dots \le m(f_s)$, then we still have the same estimate
$$
m(f_i) \ge k_i, \quad
i=1,\dots, s=\ind \goth g.
$$

This observation can sometimes be used to compute Kronecker indices for Lie algebras.  For example,  for a semisimple Lie algebra $\goth g$, the algebra of  polynomial invariants admits a natural basis 
$f_1, \dots, f_s$  and in this case \eqref{estim}  becomes the exact equality (A.\,Panasyuk \cite{Pan1}):
$$
m_i =  k_i.
$$
The numbers $e_i=m_i-1$  are known as {\it exponents} of a semisimple Lie algebra $\goth g$.   Thus, the Kronecker indices of  $\goth g$ can naturally be related to its exponents.  This result is based on the following general observation \cite{Vorontsov}.

\begin{proposition}
\label{pv}
Let $\goth g$ be of Kronecker type and $f_1,\dots, f_s$,  $s=\ind g$, be algebraically independent $\Ad^*$-invariant polynomials.  If 
\begin{equation}
\sum_{i=1}^s \deg f_i = \frac{1}{2}(\dim \goth g + \ind\goth g),
\label{eqs}
\end{equation}
 then 
$$\deg f_i=k_i ,$$
where $k_i$ are the Kronecker indices of $\goth g$.  In particular, the degrees of algebraically independent $\Ad^*$-invariants $f_1,\dots, f_s$  satisfying \eqref{eqs} are uniquely defined.
\end{proposition}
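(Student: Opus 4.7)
The strategy is to combine Theorem \ref{Vorontsov} with a simple dimension count in the Kronecker case; the result then drops out from the elementary observation that equality in a sum of $s$ inequalities forces equality term by term.

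First I would compute $\sum_{i=1}^s k_i$ for a Lie algebra of Kronecker type. For a generic pencil $\mathcal P = \{\mathcal A_{x+\lambda a}\}$, Corollary \ref{cor2}(3) gives $\dim L = \sum_{i=1}^s k_i$, where $L = \sum_{\lambda \notin \Lambda} \Ker \mathcal A_{x+\lambda a}$. Since $\goth g$ is of Kronecker type, Corollary \ref{cor3} tells us that $L$ is itself bi-Lagrangian, so
$$
\sum_{i=1}^s k_i \;=\; \dim L \;=\; \frac{1}{2}\bigl(\dim \goth g + \corank \mathcal P\bigr) \;=\; \frac{1}{2}\bigl(\dim \goth g + \ind \goth g\bigr),
$$
where I used the defining dimension of a bi-Lagrangian subspace and the fact that $\corank \mathcal P$ equals the number of Kronecker blocks, which is $\ind \goth g$ for a generic pencil.

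Next I would arrange both sequences in non-decreasing order, $k_1 \le \dots \le k_s$ and $\deg f_1 \le \dots \le \deg f_s$. Vorontsov's estimate (Theorem \ref{Vorontsov}) yields $\deg f_i \ge k_i$ for every $i$. Summing these inequalities and invoking the hypothesis \eqref{eqs} gives
$$
\frac{1}{2}\bigl(\dim \goth g + \ind \goth g\bigr) \;=\; \sum_{i=1}^s \deg f_i \;\ge\; \sum_{i=1}^s k_i \;=\; \frac{1}{2}\bigl(\dim \goth g + \ind \goth g\bigr).
$$
The extremes coincide, so every intermediate inequality must be sharp; hence $\deg f_i = k_i$ for every $i$.

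The uniqueness assertion is then immediate: the Kronecker indices $k_i$ depend only on the algebraic type of a generic pencil and are therefore invariants of $\goth g$, so any two collections of algebraically independent $\Ad^*$-invariants satisfying \eqref{eqs} must share the same multiset of degrees $\{k_1, \dots, k_s\}$. I do not anticipate a genuine obstacle here --- the proposition is essentially a direct consequence of Theorem \ref{Vorontsov} together with the Kronecker-type identity $\sum k_i = \frac{1}{2}(\dim \goth g + \ind \goth g)$. The only point I would want to double-check is the compatibility of orderings: Vorontsov's estimate is stated with both $m_i$ and $k_i$ arranged in non-decreasing order, and it is precisely this compatibility that legitimises the termwise comparison above.
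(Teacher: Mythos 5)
Your argument is correct and is essentially the one the paper has in mind (the proposition is stated without a written-out proof, being attributed to Vorontsov's observation): the termwise bound $\deg f_i \ge k_i$ from Theorem \ref{Vorontsov}, combined with the identity $\sum k_i = \frac{1}{2}(\dim\goth g + \ind\goth g)$ valid in the Kronecker case via Corollaries \ref{cor2}(3) and \ref{cor3}, forces equality in each term once the sums agree. The ordering point you flag is harmless, since $\sum(\deg f_i - k_i)=0$ with nonnegative summands already gives the conclusion.
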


It is worth noticing that  $\Ad^*$-invariants $f_1,\dots, f_s$  satisfying \eqref{eqs}  possess some other interesting properties (see, for example,  \cite{PY}, \cite{panyushev2}, \cite{OdessRub}).  Here is, for example,  a generalisation of  a well-known result due to B.\,Kostant  (Theorem
9, p. 382 in \cite{K2}) to the non-semisimple case.  

\begin{proposition}
\label{genKostant}
Let $\goth g$ be of Kronecker type and $f_1,\dots, f_s$,  $s=\ind g$, be algebraically independent $\Ad^*$-invariant polynomials satisfying \eqref{eqs}. Then the differentials of $f_1,\dots, f_s$ at a point $x\in\goth g^*$  generate
$\Ann x$ if and only if $x$ is regular, i.e., $x\notin \Sing$.
\end{proposition}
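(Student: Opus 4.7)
The direction $(\Rightarrow)$ is immediate: if $df_1(x),\dots,df_s(x)$ span $\Ann x$, then $\dim\Ann x\le s=\ind\goth g$; combined with the always-valid $\dim\Ann x\ge \ind\goth g$, equality holds, so $x\notin\Sing$. For the converse, assume $x\notin\Sing$; since $\dim\Ann x=s$ at a regular point, it suffices to prove that the $s$ vectors $df_1(x),\dots,df_s(x)\in\Ann x$ are linearly independent. By passing to homogeneous components (each of which remains $\Ad^*$-invariant since $\Ad^*$ preserves polynomial degree), I may assume $f_i$ is homogeneous of degree $m_i$.

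Since $\goth g$ is of Kronecker type, $\codim\Sing\ge 2$ by Theorem \ref{kroncase}. Because $x\notin\Sing$, I may choose $a\in\goth g^*$ so that the entire projective line $\ell=\{x+\mu a:\mu\in\mathbb C\cup\{\infty\}\}$ avoids $\Sing$; then the pencil $\mathcal P=\{\mathcal A_{x+\mu a}\}$ is of Kronecker type by Corollary \ref{cor3}, and the bi-Lagrangian subspace $L=L(x,a)=\sum_\mu\Ker\mathcal A_{x+\mu a}$ has dimension $\sum k_i$. Expanding $f_i(a+\lambda y)=\sum_{j=0}^{m_i}\lambda^j f_i^j(y)$ gives, upon differentiating in $y$, the identity $df_i(a+\lambda y)=\sum_{j\ge 1}\lambda^{j-1}\,df_i^j(y)$. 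Plugging this into the coadjoint-invariance relation $\mathcal A_{a+\lambda y}\,df_i(a+\lambda y)=0$ and collecting powers of $\lambda$ yields the Gelfand--Zakharevich chain
\[
\mathcal A_a\,df_i^1(y)=0,\qquad \mathcal A_a\,df_i^{j+1}(y)=-\mathcal A_y\,df_i^j(y),\quad j=1,\dots,m_i-1,
\]
with seed $df_i^1(y)=df_i(a)\in\Ker\mathcal A_a$. This is precisely the recursion of Corollary \ref{coradd} applied to the pencil generated by $\mathcal A_a$ and $-\mathcal A_y$, so each collection $\{df_i^j(x)\}_{j=1}^{m_i}$ is a chain of length $m_i$ lying in $L(x,a)$.

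By Proposition \ref{pv} and hypothesis \eqref{eqs}, $m_i=k_i$ for all $i$, so the total number of chain vectors $\{df_i^j(x):1\le i\le s,\ 1\le j\le m_i\}$ equals $\sum k_i=\dim L(x,a)$. The algebra $\mathcal F_a$ generated by $\{f_i^j\}$ is complete (Theorem \ref{kroncase}) with $\sum m_i$ algebraically independent generators, and Corollary \ref{coradd} identifies these generators with a full system of chain vectors in $L$, one chain of length $k_i$ per Kronecker block. Consequently, the $\sum m_i$ vectors $\{df_i^j(x)\}$ span $L(x,a)$, and equal count forces them to be linearly independent. In particular, the top-level subset $\{df_i^{m_i}(x)=df_i(x)\}_{i=1}^s$ is linearly independent and therefore spans the $s$-dimensional subspace $\Ann x$.

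The delicate step is the spanning claim: that the chains produced from the $f_i^j$ really do exhaust $L(x,a)$ rather than being trapped in a proper bi-isotropic subspace at the specific point $x$. This is where hypothesis \eqref{eqs} is indispensable; Theorem \ref{Vorontsov} only yields $m_i\ge k_i$ in general, and any strict inequality would leave slack in some chain and allow the $df_i^j(x)$ to miss part of $L$. The equality $\sum m_i=\sum k_i$ rigidifies the count and, via the Jordan--Kronecker decomposition, forces a perfect pairing between invariants and Kronecker blocks, which in turn makes the chain vectors a basis of $L(x,a)$ at every point of $\ell$ --- including $x$ itself.
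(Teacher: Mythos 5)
Your proof is correct and follows essentially the same route as the paper's: choose $a$ so that the line through $x$ avoids $\Sing$ (possible since $\codim\Sing\ge 2$), note that the differentials of the generators $f_i^j$ of $\mathcal F_a$ span the bi-Lagrangian subspace $L$ of dimension $\tfrac12(\dim\goth g+\ind\goth g)$, and use \eqref{eqs} to match this dimension with the number of generators, forcing linear independence and in particular independence of the top terms $df_i(x)$. The extra detail you supply (the explicit recursion chains and the role of Corollary \ref{coradd}) only elaborates steps the paper invokes by reference.
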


\begin{proof} The proof can be found in \cite{panyushev2}, but here we give another version based on the concept of JK invariants.

Let $x\notin \Sing$.  Since $\goth g$ is of Kronecker type, i.e.,   $\codim\Sing \ge 2$,  we can find a regular element $a\in \goth g^*$ such that the line $x+\lambda a$ does not intersect $\Sing$, so that the pencil $\mathcal A_x + \lambda \mathcal A_a$ is Kronecker.

Consider the generators $f_i^k$ of the algebra of shifts $\mathcal F_a$ obtained from $f_1,\dots, f_s$ as explained in Section 3 (see \eqref{generators}),   and the subspace $L\subset\goth g$ generated by their differentials $df_i^k(x)$. According to Corollary \ref{cor3},  $L$ is bi-Lagrangian, i.e.,
$\dim L = \frac{1}{2}(\dim \goth g + \ind\goth g)$.  But due to \eqref{eqs} this number coincides with the number of generators $f_i^k$.  Hence the differentials $df_i^k(x)$ are linearly independent. The same is, therefore, true for $df_1(x),\dots, df_s(x)$, as the invariants $f_1,\dots, f_s$ themselves belong to the set of generators $\{f_i^k\}$, namely $f_i=f_i^{\deg f_i}$.

Thus, $df_i(x)$ are linearly independent at $x$ and therefore generate $\Ann x$, as required. The converse is obvious:  $x\in\Sing$  means that $\dim\Ann x > s=\ind \goth g$ and $df_1,\dots, df_s$ cannot generate this subalgebra. \end{proof}

\section{Singular set and characteristic numbers}

The singular set $\Sing\subset \goth g^*$ plays a very important role in our construction. Here we briefly discuss some of its elementary properties.

As a subset of $\goth g^*$,  the singular set $\Sing$   
is an algebraic subvariety given by the system of homogeneous polynomial equations of the form:
\begin{equation}
\mathrm{Pf}\, C_{i_1i_2\dots i_{2k}}=0,  \quad 1\le i_1 < i_2 < \dots < i_{2k} \le \dim \goth g
\label{singeqs}
\end{equation}
where $\mathrm{Pf}$ denotes the Pffafian, and $C_{i_1i_2\dots i_{2k}}$ is the diagonal minor of the skew-symmetric matrix
 $\mathcal A_x = (c_{ij}^kx_k)$, related to the rows and columns with numbers $i_1,i_2, \dots , i_{2k}$, $2k=\dim\goth g - \ind\goth g$.  The case of Abelian Lie algebra should,  perhaps, be considered as an exception:  in this case $\Sing = \emptyset$.  Otherwise, $\Sing$ is not empty and contains at least the zero element.

$\Sing $ may consist of several irreducible components which, in general, may have different dimensions. One of the simplest examples is  the direct sum $\goth g=\goth g_1\oplus \goth g_2$, where   the singular sets $\Sing_i\subset \goth g^*_i$ ($i=1,2$) have different codimensions. Then the singular set for $\goth g$ is $\Sing = (\Sing_1\times\goth g_2^*)\cup (\Sing_2\times\goth g_1^*)$, i.e. consists of two components with different dimensions. 

The codimension of $\Sing$ can be arbitrarily large.  As an example, consider the semidirect sum of one-dimensional Lie algebra $\goth h$ and an $n$-dimensional vector space $V$, where a generator $h\in \goth h$ acts on $V$ as a regular semisimple operator. Then it is easy to check that $\Sing \subset (\goth h + V)^*$ is one-dimensional (a line), i.e., $\codim \Sing =   n$. 

If $\goth g$ is a semisimple Lie algebra, then  $\codim \Sing =3$.

The structure of the singular set $\Sing $ plays a very important role in the case when it has codimension $1$.  As we pointed out above,  it is quite possible that $\Sing $ at the same time possesses irreducible components of higher codimension. In such a case it is convenient to distinguish in  $\Sing $  the subset $\Sing _0$ that is the union of all components of codimension 1.  In other words, we simply remove from $\Sing $ all ``low-dimensional'' components.   Then $\Sing _0$ is an algebraic variety defined by one homogeneous polynomial equation:
$$
\Sing _0 = \{ f(x) = 0\}
$$

Such a polynomial $f$ is easy to describe in terms of the structure tensor $c_{ij}^k$.  Indeed,  $\Sing$ is defined by the system of equations \eqref{singeqs}.  Since $\Sing_0 \subset \Sing$, then  $f(x)=0$ implies vanishing all Pfaffians $\mathrm{Pf}\, C_{i_1i_2\dots i_{2k}}$.  Hence, as $f(x)$ we can simply take the greatest common divisor of all these Pfaffians. 
\begin{equation}
\mathsf{f} (x) = \mathrm{g.c.d.} \Bigl(\mathrm{Pf}\, C_{i_1i_2\dots i_{2k}}, \quad 1\le i_1 < i_2 < \dots < i_{2k} \le \dim \goth g\Bigr)
\label{f}
\end{equation}

This polynomial $\mathsf{f}(x)$ is not necessarily irreducible and, in general, may be decomposed into product of (irreducible) components:
$$
\mathsf{f}(x) = \underbrace{f_1(x)\cdot \ldots \cdot f_1(x)}_{s_1 \  \mathrm{ times}}\cdot \ldots \cdot 
 \underbrace{f_k(x)\cdot \ldots \cdot f_k(x)}_{s_k \  \mathrm{ times}}.
$$

Notice that $\mathsf{f}$  (as well as each of irreducible factors $f_i$) is a coadjoint semi-invariant, i.e.,
satisfies $\mathsf{f}(\Ad^*_g x )=\chi(g)\cdot \mathsf{f}(x)$, where $\chi$ is a certain character of the corresponding Lie group.

Along with the polynomial $\mathsf{f}$, we will consider its {\it reduced version}:
\begin{equation}
\mathsf{f}_{\mathrm{red}}(x) = f_1(x)\cdot \ldots \cdot f_k(x),
\label{fred}
\end{equation}
i.e., each irreducible components appears with multiplicity one.  Clearly,  $\mathsf{f}_{\mathrm{red}}(x)=0$ still defines the codimension one singular set $\Sing_0$.

The set $\Sing_0$ and polynomials $\mathsf{f}$, $\mathsf{f}_{\mathrm{red}}$ are closely related to the characteristic numbers of the Lie algebra $\goth g$. First of all, Theorem \ref{kroncase} immediately implies

\begin{proposition}
Characteristic numbers of $\goth g$ exist if and only if  $\codim \Sing  =1$.
\end{proposition}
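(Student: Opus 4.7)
The plan is to identify characteristic numbers geometrically with the parameters $\lambda$ at which the affine line $\ell_{x,a}=\{x+\lambda a\}\subset\goth g^*$ meets $\Sing$, and then to deduce the statement directly from Theorem \ref{kroncase}.

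First I would make the key observation: for a generic pair $(x,a)$, the maximal rank of the pencil satisfies $\rank\mathcal P = \dim\goth g - \ind\goth g$, and a value $\lambda\in\mathbb C$ is a characteristic number of $\mathcal P=\{\mathcal A_{x+\lambda a}\}$ precisely when $\rank \mathcal A_{x+\lambda a}<\rank\mathcal P$. By the very definition of the singular set, this last inequality is equivalent to $x+\lambda a \in \Sing$. Hence characteristic numbers of $\goth g$ exist if and only if the generic line $\ell_{x,a}$ meets $\Sing$.

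Next I would invoke Theorem \ref{kroncase}: the generic pencil is Kronecker (equivalently, has no Jordan blocks, equivalently has no characteristic numbers) if and only if $\codim \Sing\ge 2$. Taking the contrapositive together with the fact that $\Sing$, being the zero locus of the Pfaffians of the top-order diagonal minors of $\mathcal A_x$, is either empty or a proper algebraic subvariety of $\goth g^*$ (its complement contains the non-empty Zariski open set of regular covectors, so $\codim\Sing\ge 1$ whenever $\Sing\ne\emptyset$), we conclude that characteristic numbers exist iff $\codim\Sing = 1$.

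The argument is almost immediate once the geometric reformulation is in hand, and I do not expect any real obstacle. The only minor point to verify is that a codimension-one component of $\Sing$ is actually hit by a generic line in $\goth g^*$ — i.e.\ that the existence of characteristic numbers is not prevented by some ``at infinity'' issue. This follows from the observation that, for generic $(x,a)$, the restriction of the polynomial $\mathsf f$ defined by \eqref{f} to $\ell_{x,a}$ is a non-constant polynomial in $\lambda$ and therefore, over $\mathbb C$, has roots by the fundamental theorem of algebra. Possible components of $\Sing$ of higher codimension play no role here: they are automatically avoided by a generic line and contribute nothing to the characteristic numbers of a generic pencil.
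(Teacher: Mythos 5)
Your proof is correct and follows essentially the same route as the paper, which derives the proposition directly from the equivalence in Theorem \ref{kroncase} between the Kronecker property (absence of characteristic numbers) and the condition $\codim \Sing \ge 2$; your geometric identification of characteristic numbers with the intersection points of the generic line $x+\lambda a$ with $\Sing$ is exactly the mechanism behind that theorem's proof. The extra care you take about the leading coefficient of $\mathsf f(x+\lambda a)$ (which is $\mathsf f(a)\neq 0$ for generic $a$, so the restricted polynomial is non-constant of degree $\deg\mathsf f$) is a correct and worthwhile detail that the paper leaves implicit.
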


Let $\mathcal P = \{\mathcal A_{x+\lambda a}\}$ be a generic pencil, $x,a\in\goth g^*$. 
The characteristic numbers $\lambda_i=\lambda_i(x,a)$ can be characterised by the simple algebraic condition that  $x+\lambda_i a \in \Sing$.  Since the pair $(x,a)$ is generic, $\Sing$ can be replaced by its codimension one part $\Sing_0$. In other words, the characteristic numbers are exactly the roots of the polynomial  $\mathsf{p}(\lambda)=\mathsf{f}(x+ \lambda a)$ (or $\mathsf{p}_{\mathrm{red}}(\lambda)=\mathsf{f}_{\mathrm{red}}(x+ \lambda a)$). According to decomposition \eqref{singeqs}  (or \eqref{fred}),  the  characteristic numbers can be partitioned into $k$ groups $\Lambda_1, \dots, \Lambda_k$ each of which naturally corresponds to one of these irreducible polynomials $f_1(x),\dots, f_k(k)$.
Hence we immediately obtain

\begin{proposition}\label{reducible}\quad

\begin{enumerate}
\item The number of distinct characteristic numbers $\lambda_i$  of  $\goth g$ equals  the degree of  $\mathsf{f}_{\mathrm{red}}(x)$.  Similarly, the degree of $\mathsf{f}(x)$ is the number of characteristic numbers with multiplicities.

\item More precisely,  the number of characteristic numbers in each group $\Lambda_i$ is equal to the degree of  $f_i$.  The multiplicity\footnote{By the multiplicity of a characteristic number $\lambda_i$ we understand the sum of sizes of the corresponding Jordan blocks $J(\lambda_i)$, see Theorem \ref{JKD}.  We want to emphasise that due to skew symmetry each block in the JK decomposition consists of the pair of  $J(\lambda_i)$ blocks.  To compute the multiplicity we take into account only one of them.} of  a characteristic number from $\Lambda_i$ is equal to the multiplicity $s_i$ of $f_i$ in the decomposition   \eqref{singeqs}. In particular, all characteristic numbers within a group $\Lambda_i$ have the same multiplicity.

\item If some of the characteristic numbers have different multiplicities, then the set (variety) of singular elements $\Sing_0 $ is reducible.

\end{enumerate}

\end{proposition}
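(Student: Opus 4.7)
The approach is to identify the restricted polynomial $\mathsf{p}(\lambda) := \mathsf{f}(x + \lambda a)$ with the characteristic polynomial of the pencil $\mathcal{P} = \{\mathcal{A}_{x+\lambda a}\}$ defined in Section 5, so that its roots---taken with the multiplicities stated there---are precisely the characteristic numbers of $\mathcal{P}$. Once this identification is in hand, all three parts fall out of elementary univariate algebra applied to the factorisation $\mathsf{f} = f_1^{s_1} \cdots f_k^{s_k}$.

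For the identification, observe that for a generic pair $(x,a)$ the rank of every form in $\mathcal{P}$ equals $\dim \goth g - \ind \goth g = 2k$, so the characteristic polynomial from Section 5 is the gcd in $\lambda$ of the Pfaffians $\mathrm{Pf}\,C_{i_1 \ldots i_{2k}}(x + \lambda a)$. Since by \eqref{f} the polynomial $\mathsf{f}(y)$ is the gcd in $y$ of the same Pfaffians, $\mathsf{f}(x + \lambda a)$ divides each of these univariate polynomials and hence the characteristic polynomial. The reverse divisibility requires a genericity argument: for $(x,a)$ outside a proper algebraic subset of $\goth g^*\times\goth g^*$, the restrictions of the Pfaffians to the line acquire no new common factors, so the two polynomials agree up to a nonzero scalar. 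The leading coefficient of $\mathsf{p}$ in $\lambda$ is $\mathsf{f}(a)$, which is nonzero for generic $a$, so $\deg_\lambda \mathsf{p} = \deg \mathsf{f}$; this gives Part 1 immediately, and applying the same argument to $\mathsf{f}_{\mathrm{red}}$ yields the count of distinct characteristic numbers.

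For Part 2, substituting into the factorisation gives $\mathsf{p}(\lambda) = \prod_i f_i(x + \lambda a)^{s_i}$. A generic line meets each irreducible hypersurface $\{f_i = 0\}$ transversally in $\deg f_i$ simple points, so the univariate factor $f_i(x + \lambda a)$ has $\deg f_i$ distinct roots; moreover, the line misses each pairwise intersection $\{f_i = 0\} \cap \{f_j = 0\}$, which has codimension at least two, so roots belonging to different factors are disjoint. The group $\Lambda_i$ is then naturally identified with the $\deg f_i$ roots of $f_i(x+\lambda a)$, each entering $\mathsf{p}$ with multiplicity $s_i$. Part 3 is the contrapositive of this description: unequal multiplicities of characteristic numbers force at least two distinct irreducible factors in $\mathsf{f}$, hence in $\mathsf{f}_{\mathrm{red}}$, so $\Sing_0 = \{\mathsf{f}_{\mathrm{red}} = 0\}$ splits as a union of at least two irreducible hypersurfaces.

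The main obstacle is the identification carried out in the second paragraph---verifying that restriction to a generic line commutes with taking the gcd of the defining Pfaffians. This is the only step that goes beyond formal manipulation of polynomials; it amounts to checking that the ``bad'' locus in $\goth g^* \times \goth g^*$ where extra common divisors appear upon restriction is a proper algebraic subset, which can be done by a standard Bertini-type transversality argument. Everything else is then bookkeeping with the factorisation of a univariate polynomial.
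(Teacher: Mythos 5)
Your proposal is correct and follows essentially the same route as the paper, which simply observes that the characteristic numbers are the roots of $\mathsf{p}(\lambda)=\mathsf{f}(x+\lambda a)$ and reads off all three statements from the factorisation $\mathsf{f}=f_1^{s_1}\cdots f_k^{s_k}$ together with genericity of the line $x+\lambda a$. You are in fact more careful than the paper on the one nontrivial point — that taking the g.c.d.\ of the Pfaffians commutes with restriction to a generic line (ruled out by the codimension $\ge 2$ of the common zero locus of the cofactors) — which the paper passes over without comment.
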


Recall that speaking of characteristic numbers $\lambda_i$ of $\goth g$ we consider them as locally analytic functions $\lambda_i(x,a)$ defined in a neighbourhood of a generic pair $(x,a)\in\goth g^*\times\goth g^*$.  However, the partition of the characteristic numbers into groups $\Lambda_i$ is global, whereas the characteristic numbers within a certain group $\Lambda_i$ are defined only locally. For applications, we need, of course,  globally defined invariants of the pencil 
$\mathcal A_{x+\lambda a}$.  They can be easily constructed by means of Vi\`ete's theorem.

\begin{proposition}
The symmetric polynomials of characteristic numbers  are rational functions of $x$ and $a$.
Moreover, if $a\in \mathfrak g^*$ is fixed, then
they are polynomial in $x$.
\end{proposition}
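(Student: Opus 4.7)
The plan is to reduce this to Viète's theorem applied to the one-variable polynomial $\mathsf{p}(\lambda) = \mathsf{f}(x+\lambda a)$, whose roots are exactly the characteristic numbers $\lambda_i(x,a)$ (counted with multiplicity) by the discussion immediately preceding Proposition \ref{reducible}. The whole argument is essentially three lines once the polynomial dependence of $\mathsf{p}$ on $(x,a)$ is made explicit.

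First, I would expand
\[
\mathsf{p}(\lambda) \;=\; \mathsf{f}(x+\lambda a) \;=\; \sum_{k=0}^{d} c_k(x,a)\,\lambda^k, \qquad d=\deg \mathsf{f},
\]
and observe that each $c_k(x,a)$ is a polynomial in the coordinates of $x$ and $a$ (bihomogeneous of total degree $d$), since $\mathsf{f}$ itself is polynomial and $x+\lambda a$ is linear in both arguments. The leading coefficient is $c_d(x,a)=\mathsf{f}(a)$, which depends only on $a$ and is nonzero at a generic $a$ (otherwise the line $x+\lambda a$ would lie entirely in $\Sing_0$, contradicting genericity of the pair $(x,a)$).

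Second, Viète's formulas give
\[
e_k(\lambda_1,\dots,\lambda_d) \;=\; (-1)^k \,\frac{c_{d-k}(x,a)}{\mathsf{f}(a)}
\]
for the elementary symmetric polynomials of the characteristic numbers. The right-hand side is manifestly a rational function of $(x,a)$, proving the first claim. If $a$ is fixed with $\mathsf{f}(a)\neq 0$, the denominator is a nonzero constant and the $e_k$ become honest polynomials in $x$. Finally, since every symmetric polynomial in $\lambda_1,\dots,\lambda_d$ is a polynomial in the $e_k$'s, the same conclusion extends to all symmetric polynomials of the characteristic numbers.

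There is no real obstacle here; the statement is a direct consequence of Viète applied to $\mathsf{p}(\lambda)$. The only minor point to flag is the distinction between using $\mathsf{f}$ (which records characteristic numbers with their multiplicities, as described in Proposition \ref{reducible}) versus $\mathsf{f}_{\mathrm{red}}$ (which lists the distinct ones); the argument above is insensitive to this choice and works verbatim in both settings.
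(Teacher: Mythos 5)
Your argument is correct and is exactly the route the paper intends: the text immediately preceding the proposition identifies the characteristic numbers as the roots of $\mathsf{p}(\lambda)=\mathsf{f}(x+\lambda a)$ and invokes Vi\`ete's theorem, which is precisely what you carry out, including the key observation that the leading coefficient $\mathsf{f}(a)$ is a nonzero constant once $a$ is fixed outside $\Sing_0$. Your closing remark about $\mathsf{f}$ versus $\mathsf{f}_{\mathrm{red}}$ (and, by the same token, the individual factors $f_i$ giving the groups $\Lambda_i$) matches the paper's comment that the conclusion holds in each of these cases.
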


In this statement,  we can consider all distinct characteristic numbers, or all characteristic numbers with multiplicities, or all characteristic numbers from a certain group $\Lambda_i$. The conclusion of this proposition holds true in each of these cases.

From the viewpoint of the generalised argument shift conjecture,  the following statement is very important.

\begin{proposition}
Consider the polynomial $\mathsf{f}_{\mathrm{red}}(x)$ defining the codimension one singular set $\Sing_0$ and given by \eqref{fred}.  Take the Taylor expansion of $\mathsf{f}_{\mathrm{red}}$ at the point $a\in\goth g^*$:
$$
\mathsf {f}_{\mathrm{red}} (a + \lambda x) = g_0 + \lambda g_1(x) + \lambda^2 g_2(x) + \dots + \lambda^m g_m(x).
$$
Then the homogeneous polynomials $g_1(x), \dots, g_m(x)$  are in bi-involution w.r.t. the brackers \eqref{LiePoisson} and \eqref{abracket}.
\end{proposition}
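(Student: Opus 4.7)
The plan is to exploit the fact that $\mathsf{f}_{\mathrm{red}}$, being the product of the irreducible coadjoint semi-invariants $f_1,\dots,f_k$, is itself a polynomial coadjoint semi-invariant with character $\chi = \chi_1\cdots\chi_k$. Differentiating the relation $\mathsf{f}_{\mathrm{red}}(\Ad^*_{\exp t\xi}x) = e^{t\,d\chi(\xi)}\mathsf{f}_{\mathrm{red}}(x)$ at $t=0$ and using the definition of $\ad^*$ yields the infinitesimal semi-invariance identity
\begin{equation*}
\langle x,\,[d\mathsf{f}_{\mathrm{red}}(x),\,\xi]\rangle \;=\; d\chi(\xi)\cdot \mathsf{f}_{\mathrm{red}}(x) \qquad \text{for all } \xi\in\goth g,\ x\in\goth g^*.
\end{equation*}

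The crucial observation is obtained by setting $\xi = d\mathsf{f}_{\mathrm{red}}(x)$ in this identity: the left-hand side then vanishes by skew-symmetry of the Lie bracket, so $d\chi\bigl(d\mathsf{f}_{\mathrm{red}}(x)\bigr)\cdot \mathsf{f}_{\mathrm{red}}(x) \equiv 0$. Since $\mathsf{f}_{\mathrm{red}}$ is a nonzero polynomial, this forces the polynomial identity $d\chi\bigl(d\mathsf{f}_{\mathrm{red}}(x)\bigr) \equiv 0$ on all of $\goth g^*$. This is where the proof really happens; everything else is formal.

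Writing $F_\lambda(x) := \mathsf{f}_{\mathrm{red}}(a+\lambda x)$ and $w_\lambda := a+\lambda x$, we have $dF_\lambda(x) = \lambda\, d\mathsf{f}_{\mathrm{red}}(w_\lambda)$, so
\begin{equation*}
\{F_\lambda,F_\mu\}(x) = \lambda\mu\,\bigl\langle x,\,[d\mathsf{f}_{\mathrm{red}}(w_\lambda),d\mathsf{f}_{\mathrm{red}}(w_\mu)]\bigr\rangle, \quad \{F_\lambda,F_\mu\}_a(x) = \lambda\mu\,\bigl\langle a,\,[d\mathsf{f}_{\mathrm{red}}(w_\lambda),d\mathsf{f}_{\mathrm{red}}(w_\mu)]\bigr\rangle.
\end{equation*}
Both $x = (w_\lambda - w_\mu)/(\lambda-\mu)$ and $a = (\lambda w_\mu - \mu w_\lambda)/(\lambda-\mu)$ are linear combinations of $w_\lambda$ and $w_\mu$, so applying the semi-invariance identity at the points $w_\lambda$ and $w_\mu$ (with the other differential playing the role of $\xi$) rewrites each bracket as a linear combination of the already-vanishing quantities $d\chi(d\mathsf{f}_{\mathrm{red}}(w_\bullet))\cdot \mathsf{f}_{\mathrm{red}}(w_\bullet)$. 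Hence $\{F_\lambda,F_\mu\}(x)\equiv 0$ and $\{F_\lambda,F_\mu\}_a(x)\equiv 0$ identically in $x$, and extracting coefficients of $\lambda^i\mu^j$ in the Taylor expansions yields the desired bi-involution $\{g_i,g_j\} = \{g_i,g_j\}_a = 0$ for all $i,j\geq 1$.

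The main — and in fact only — obstacle is the polynomial identity $d\chi\circ d\mathsf{f}_{\mathrm{red}} \equiv 0$, which is not obvious a priori but is forced by the self-substitution $\xi = d\mathsf{f}_{\mathrm{red}}(x)$ in the infinitesimal semi-invariance. Once this is in place, the bracket computation is exactly in the spirit of the classical Mischenko--Fomenko argument, extended from coadjoint invariants to coadjoint semi-invariants.
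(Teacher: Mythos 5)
Your proof is correct, and it is a genuinely different (and more self-contained) route than the one the paper takes. The paper does not argue directly: it observes that $g_1,\dots,g_m$ are, up to constants, the symmetric functions of the characteristic numbers of the pencil $\mathcal A_{a+\lambda x}$ and then invokes the general bi-Hamiltonian fact that characteristic numbers of a compatible Poisson pencil are in bi-involution, mentioning the Arkhangelskii--Trofimov ``shift of semi-invariants'' only as an alternative viewpoint. What you have written is a complete algebraic rendering of that second route. The crux is exactly where you put it: the identity $d\chi\bigl(d\mathsf{f}_{\mathrm{red}}(x)\bigr)\equiv 0$, obtained by the self-substitution $\xi=d\mathsf{f}_{\mathrm{red}}(x)$ and cancellation of the nonzero polynomial $\mathsf{f}_{\mathrm{red}}$; this is what replaces the condition $df(y)\in\Ker\mathcal A_y$ used in the classical Mischenko--Fomenko computation for genuine invariants, and the rest of your bracket manipulation (expressing $x$ and $a$ as linear combinations of $w_\lambda$ and $w_\mu$ and then extracting coefficients of $\lambda^i\mu^j$) is the standard argument, carried out correctly with the paper's sign conventions. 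It is worth noting that your argument succeeds precisely because only one semi-invariant is being shifted: for two distinct semi-invariants $f,g$ with characters $\chi_f,\chi_g$ one would additionally need the cross conditions $d\chi_f(dg)\equiv 0$ and $d\chi_g(df)\equiv 0$, which the self-substitution does not supply. Your approach buys a purely algebraic proof valid over any field of characteristic zero, with no appeal to Nijenhuis tensors or local normal forms; the paper's route buys brevity and embeds the statement in the general theory. The only input you take on faith is the semi-invariance of $\mathsf{f}_{\mathrm{red}}$ itself, but the paper asserts this explicitly, so citing it is legitimate.
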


Clearly, the polynomials $g_1, \dots, g_m$ up to a certain constant  (that depends on $a$) are exactly the symmetric polynomials of characteristic numbers. So this proposition is just a particular case of a well-known statement from the theory of bi-Hamiltonian systems:   for  any pencil of compatible Poisson structures $\mathcal A +\lambda\mathcal B$, the characteristic numbers of this pencil are in bi-involution.

On the other hand, this proposition can be considered as a particular case of the ``shift of semi-invariants'' method suggested by A.A.\,Arkhangelskii \cite{Arkh} and then developed by V.V.\,Trofimov \cite{trofimov}.

\medskip

Finally we consider the case when a Lie algebra $\goth g$ is Frobenius, i.e. its index is zero. Then $\Sing $ is defined by one single polynomial, namely: $\mathsf{f}(x)=\mathrm{Pf} (\mathcal A_x) = \sqrt{\det \left(c_{ij}^k x_k\right)}$. Assume that this polynomial is either irreducible, or has no multiple components in its decomposition \eqref{singeqs} into irreducibles polynomials, i.e., all $s_i$ equal 1.  This is equivalent to the fact that its degree $\deg \mathsf{f}$ coincides with the (geometric) degree of the singular set $\Sing$ which we can understand as the number of distinct intersection points of $\Sing$ with a generic line $x+\lambda a$. Such a situation seems to be quite typical. Under this assumption we have the following
 
 \begin{theorem}\label{thfrob}
Let $\goth g$ be a Frobenius Lie algebra, and the (geometric) degree of  $\Sing\subset \goth g^*$ is equal to $k=\frac12\dim\goth g$.
 Then a generic pencil $\mathcal A_x + \lambda \mathcal A_a$ is diagonalisable (i.e. has no Jordan blocks of size greater than $2\times 2$), all characteristic numbers are distinct, and the coefficients of the ``characteristic polynomial''  $\mathsf{p}(\lambda)=\mathrm{Pf}\, \mathcal A_{x + \lambda a}$ form a complete family of polynomials in bi-involution.
\end{theorem}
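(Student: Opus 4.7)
The plan is to exploit the fact that for a Frobenius algebra, Theorem \ref{jordcase} immediately forces the generic pencil $\mathcal{A}_x + \lambda \mathcal{A}_a$ to be of purely Jordan type; no Kronecker blocks appear, and $\dim\goth g = 2k$ is even. The single object controlling everything is the Pfaffian polynomial $\mathsf{p}(\lambda) = \mathrm{Pf}\,\mathcal{A}_{x+\lambda a}$, which is a polynomial in $\lambda$ of degree $k$, and whose constant term equals $\mathsf{f}(x) = \mathrm{Pf}\,\mathcal{A}_x$ --- the polynomial describing $\Sing_0$.

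The first step I would take is to pin down the precise relationship between the factorisation of $\mathsf{p}(\lambda)$ and the JK decomposition of the pencil. A direct computation on a single Jordan pair of size $m$ (where $J(\lambda_i)$ is $m\times m$) yields $\mathrm{Pf}(\mathcal{A}_i + \lambda \mathcal{B}_i) = \pm (\lambda_i - \lambda)^m$, so that multiplying over all blocks gives $\mathsf{p}(\lambda) = \pm \prod_i (\lambda - \lambda_i)^{m_i}$ with $\sum_i m_i = k$. Consequently, the geometric degree of $\Sing_0$ equals the number of distinct roots of $\mathsf{p}(\lambda)$, and the hypothesis that this degree equals $k$ forces every $m_i = 1$. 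Translated back, each Jordan pair in the JK decomposition has size one, i.e.\ consists of $2\times 2$ blocks, and the $k$ characteristic numbers are pairwise distinct. This gives the diagonalisability and distinctness assertions.

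For bi-involution of the coefficients of $\mathsf{p}(\lambda)$, I would invoke directly the proposition preceding the theorem about the Taylor expansion of $\mathsf{f}_{\mathrm{red}}$. Since all $m_i = 1$, Proposition \ref{reducible} tells us that all the multiplicities $s_i$ in the decomposition \eqref{singeqs} are equal to one, hence $\mathsf{f} = \mathsf{f}_{\mathrm{red}}$. Therefore the coefficients $g_1(x),\dots,g_k(x)$ of $\mathsf{p}(\lambda) = \mathsf{f}(x + \lambda a)$ are precisely the polynomial shifts of $\mathsf{f}_{\mathrm{red}}$ from that proposition, and so they are in bi-involution with respect to both brackets \eqref{LiePoisson} and \eqref{abracket}.

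The remaining and, in my view, hardest step is completeness: the $k$ coefficients of $\mathsf{p}(\lambda)$ must be algebraically independent as polynomials in $x$, since here $\tfrac12(\dim\goth g + \ind \goth g) = k$. The cleanest route is to show that the differentials $d\lambda_1,\dots,d\lambda_k$ of the locally analytic characteristic numbers are linearly independent at a generic $x_0$; the algebraic independence of their elementary symmetric functions, and hence of the coefficients of $\mathsf{p}$, then follows from a Vandermonde argument using that the $\lambda_i$ are distinct. By the implicit function theorem applied to $\mathsf{f}(x + \lambda_i a) = 0$, this linear independence reduces to linear independence of the normal vectors $d\mathsf{f}|_{y_i}$ at the $k$ distinct smooth points $y_i = x + \lambda_i a$ of $\Sing_0$. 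This is the point I expect to be the main obstacle. The argument I would pursue is twofold: first, that for generic $(x,a)$ the line $x + \lambda a$ meets $\Sing_0$ transversally at each of its $k$ intersection points (using that the geometric degree already equals the algebraic degree $k$), and second, that by perturbing $x$ one can independently shift each of the intersection points along the line, so that no nontrivial linear combination of the $d\mathsf{f}|_{y_i}$ can vanish. Alternatively, the same conclusion can be reached by passing to a local JK normal form near a generic $(x_0,a)$: the pencil there decomposes as a direct sum of $k$ independent $2\times 2$ Jordan pairs, each with its own independently varying parameter $\lambda_i$, giving the independence of the $d\lambda_i$ at once.
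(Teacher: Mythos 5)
Your handling of diagonalisability, distinctness of the characteristic numbers, and bi-involution is correct and matches the paper: the Pfaffian of a Jordan pair of size $m$ is $\pm(\lambda_i-\lambda)^m$, so $\mathsf{p}(\lambda)=\pm\prod(\lambda-\lambda_i)^{m_i}$ with $\sum m_i=k$, and the hypothesis on the geometric degree forces all $m_i=1$; bi-involution then follows from the proposition on shifts of $\mathsf{f}_{\mathrm{red}}$. The gap is exactly where you anticipated it: the functional independence of $\lambda_1,\dots,\lambda_k$. Your route (a) reduces this, correctly, to the linear independence of the covectors $d\mathsf{f}|_{y_i}$ at the $k$ intersection points $y_i=x+\lambda_i a$, but the justification you offer (transversality plus "one can independently shift each intersection point") is circular --- the ability to shift the points independently \emph{is} the linear independence of the $d\lambda_i$ --- and, more seriously, the statement is simply false at the level of generality you are working at. Take $\mathsf{f}=x_1x_2(x_1+x_2)$ on $\mathbb C^6$ with $k=3$: a generic line meets $\{\mathsf{f}=0\}$ transversally in $3$ distinct points, so the geometric degree equals the algebraic degree, yet the three root functions $\lambda_1=-x_1/a_1$, $\lambda_2=-x_2/a_2$, $\lambda_3=-(x_1+x_2)/(a_1+a_2)$ have differentials lying in the two-dimensional span of $dx_1,dx_2$ and are functionally dependent. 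So no argument using only the degree hypothesis and the geometry of the hypersurface $\Sing_0$ can close this step.

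The paper is explicit that this is the one non-trivial ingredient and that it is precisely here that the Jacobi identity enters: the compatibility of the Poisson structures $\mathcal A_x$ and $\mathcal A_a$ forces the Nijenhuis tensor of the recursion operator $R=\mathcal A_x\mathcal A_a^{-1}$ to vanish, and a Nijenhuis operator with simple non-constant eigenvalues splits \emph{locally, in a neighbourhood} into blocks each of whose eigenvalue depends only on that block's coordinates --- whence the independence (alternatively one invokes Turiel's local normal form for a non-degenerate compatible pair). Your route (b) gestures at exactly this conclusion ("a local JK normal form near $(x_0,a)$ with independently varying parameters"), but the pointwise Jordan--Kronecker decomposition gives no such neighbourhood statement for free; upgrading it to a local splitting with independent eigenvalues is precisely the content of the Turiel/Nijenhuis theorems and requires the compatibility condition, which your proposal never uses. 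To repair the proof you must bring in this bi-Hamiltonian input; as written, both of your routes to completeness fail.
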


\begin{proof} The diagonalisability of $\mathcal A_x + \lambda \mathcal A_a$ is obvious as all characteristic numbers are distinct.
The second statement of the theorem contains one non-trivial ingredient: from the existence of 
 $k$ {\it distinct} characteristic numbers\footnote{To prove the theorem we can obviously pass from coefficients of $\mathsf{f}(x+\lambda a)$ to its roots, i.e., to the characteristic numbers.} we can immediately conclude that they are {\it functionally independent}  (by the way, it is for this reason that we need the Jacobi identity). The explanations of this ``miracle'' comes from the theory of bi-hamiltonian systems and compatible Poisson brackets.  If we consider the so-called recursion operator $R=\mathcal A_x\mathcal A_a^{-1}$, then the compatibility condition for the Poisson structures $\mathcal A_x$ and $\mathcal A_a$ immediately implies vanishing the Nijenhuis tensor for $R$. It is a very well-known fact from local differential geometry that non-constant eigenvalues of such operators have to be functionally independent. The point is that $R$  (with zero Nijenhuis tensor) can locally be reduced to a block-diagonal form where each block possesses exactly one eigenvalue and, moreover, this eigenvalue depends only of the coordinates related to the block\footnote{Alternatively, one can use the normal form theorem for non-degenerate compatible Poisson structures by F.-J.Turiel \cite{turiel} from which the desired result immediately follows.}. Thus, the purely algebraic fact (algebraic independence of the coefficients of  $\mathsf{p}(\lambda)=\mathrm{Pf}\, \mathcal A_{x + \lambda a} $) which would  probably be difficult to prove by algebraic means, turns out to be almost obvious from the viewpoint of bi-Poisson geometry. \end{proof}

Notice that if  the degree of $\Sing$ is smaller than $\frac12\dim\goth g$, then in the case of a Frobenius Lie algebra $\goth g$ we can still assert that the coefficients of the reduced polynomial $\mathsf{p}_{\mathrm{red}}(\lambda)=\mathsf{f}_{\mathrm{red}}(x+\lambda a)$  are functionally independent, i.e., in any case we obtain $k$ functions in bi-involution, where $k$ is the geometric degree of $\Sing$.
It is not quite clear if this statement still holds if $\goth g$ is not Frobenius  (i.e. if some Kronecker blocks exist). The answer is apparently  negative.

\section{Index of the annihilator and the Elashvili conjecture from the viewpoint of JK invariants}

In this section, instead of a generic pair $(x,a)\in \goth g^*\times\goth g^*$ we consider the situation when $a\in \goth g^*$ is singular and fixed, whereas $x\in \goth g^*$ is still generic.

Let $\Ann a = \{ \xi \in \goth g~|~ \ad^*_\xi a = 0\}$ be the stationary subalgebra of $a\in \goth g^*$ with respect to the coadjoint representation.
The following estimate is well-known (see, for example, \cite{ArnGiv}):
\begin{equation}
\ind \Ann a \ge \ind \goth g.
\label{indexest}
\end{equation}

The Elashvili conjecture\footnote{This conjecture has its origin in the theory of integrable systems on Lie algebras. Namely, in \cite{Bols1} it was proved that the condition $\ind \Ann a = \ind \goth g$ is equivalent to the completeness of the family of shifts on the singular coadjoint orbit $O(a)$. This equality was checked for all singular elements of $\mathrm{sl}(n)$ and it was conjectured that it is still true for arbitrary (or at least for classical) semisimple Lie algebras.} states that if $\goth g$ is semisimple then for any $a\in \goth g^*=\goth g$ we have the equality
\begin{equation}
\ind \Ann a = \ind \goth g.
\label{indexconj}
\end{equation}
This conjecture has been recently proved by J--Y.\,Charbonnel and A.\,Moreau \cite{charb}, see also discussion in \cite{degraaf, panyushev, yak}.

Here is the reformulation of \eqref{indexconj}  in terms of Jordan-Kronecker decomposition:

\begin{proposition}
\label{proindex}
Let $a\in \goth g^*$ be fixed and $x\in \goth g^*$ be generic in the sense that the type of the  Jordan--Kronecker decomposition of the pencil $\mathcal A_x+\lambda \mathcal A_a$ remains unchanged under small perturbation of $x$. Then
$$
\ind \Ann a = \ind \goth g
$$
if and only if the Jordan--Kronecker decomposition of $\mathcal A_x+\lambda \mathcal A_a$ does not contain non-trivial Jordan blocks, i.e.,  the Jordan part is diagonalisable.
Otherwise, i.e. if there are non-trivial Jordan blocks,  we have strong inequality:
$$
\ind \Ann a > \ind \goth g
$$
\end{proposition}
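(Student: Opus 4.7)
The plan is to interpret $\ind \Ann a$ as a corank that can be read off directly from the Jordan--Kronecker normal form of the pencil $\{\mathcal A_x + \lambda \mathcal A_a\}$, and then to compute that corank block by block. The starting observation is that $\Ann a = \Ker \mathcal A_a$, and that for any $y \in (\Ann a)^*$ with extension $\tilde y \in \goth g^*$, the Lie--Poisson tensor of $\Ann a$ at $y$ equals the restriction $\mathcal A_{\tilde y}|_{\Ann a \times \Ann a}$; the right-hand side is independent of the extension since the difference of two extensions vanishes on $\Ann a$. Because the restriction map $\goth g^* \to (\Ann a)^*$ is surjective, a generic $x \in \goth g^*$ restricts to a generic element of $(\Ann a)^*$. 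Taking $x$ simultaneously generic for the JK type of the pencil (both being Zariski-open non-empty conditions), we obtain
$$
\ind \Ann a \;=\; \corank\bigl(\mathcal A_x|_{\Ker \mathcal A_a}\bigr).
$$

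Next, the JK blocks are pairwise orthogonal for both $\mathcal A_x$ and $\mathcal A_a$, so $\Ker \mathcal A_a$ splits as a direct sum of kernels in each block and $\mathcal A_x|_{\Ker \mathcal A_a}$ becomes block-diagonal. One inspects the four block types. A Kronecker block of size $2k-1$ contributes a one-dimensional piece to $\Ker \mathcal A_a$ on which $\mathcal A_x$ vanishes (corank $1$). A finite Jordan block contributes nothing, since $\mathcal A_a$ has the form $\bigl(\begin{smallmatrix}0 & -I \\ I & 0\end{smallmatrix}\bigr)$ and is non-degenerate. A trivial $2\times 2$ Jordan block at $\infty$ contributes $0$, as $\mathcal A_a$ vanishes on it but $\mathcal A_x$ is the non-degenerate symplectic form there. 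A non-trivial Jordan block at $\infty$ of size $2n\geq 4$ contributes $2$: its $\Ker \mathcal A_a$ is two-dimensional, spanned by the first and last basis vectors of the block, and $\mathcal A_x$ pairs these two vectors to $0$ in the symplectic form $\bigl(\begin{smallmatrix}0 & -I \\ I & 0\end{smallmatrix}\bigr)$ because they are more than one index apart.

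Summing block by block, and using that the number of Kronecker blocks equals $\ind \goth g$, one obtains
$$
\ind \Ann a \;=\; \ind \goth g + 2N,
$$
where $N$ is the number of non-trivial Jordan blocks of the pencil at $\lambda = \infty$ --- the only Jordan blocks to which $\Ker \mathcal A_a$ is sensitive. The proposition follows: $\ind \Ann a = \ind \goth g$ iff $N = 0$, i.e.\ iff the Jordan part is diagonalisable, and otherwise $\ind \Ann a > \ind \goth g$ with gap $2N$. The only delicate point is the simultaneous genericity of $x$ used in the first step, which is handled by intersecting two Zariski-open conditions; all remaining work is the routine block-by-block inspection. Alternatively, one may package the whole argument as the $\lambda = \infty$ analogue of Corollary \ref{cor4}(4), with the same outcome.
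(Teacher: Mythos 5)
Your proof is correct and follows essentially the same route as the paper: identify $\ind \Ann a$ with $\corank\bigl(\mathcal A_x|_{\Ker \mathcal A_a}\bigr)$ via the observation that this restriction is the Lie--Poisson form of the subalgebra $\Ann a$ at the projected (generic) point, and then read off that corank from the Jordan--Kronecker normal form. The only difference is that where the paper simply invokes items 1 and 2 of Corollary \ref{cor4} with $\mu=\infty$, you unpack that corollary by the explicit block-by-block inspection (correctly noting that only the $\infty$-blocks contribute), which is exactly the computation underlying the cited corollary.
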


\begin{proof}
This result is just a reformulation of items 1 and 2 of Corollary \ref{cor4}  for  this special pencil $\mathcal P=\{\mathcal A_x+\lambda \mathcal A_a\}$. 
Indeed \eqref{indexest} is a particular case of item 1   (when $\mu=\infty$).
For our pencil $\corank \mathcal P=\ind \goth g$,
$\Ker \mathcal B = \Ann a$ and 
$\mathcal A|_{\Ker \mathcal B}$ is just the skew-symmetric form on $\Ann a$ related to the element $\pi(x) \in (\Ann a)^*$ where $\pi: \goth g^* \to (\Ann a)^*$ is the natural projection. In particular,   if $x$ is generic, then we have  $\corank \left( \mathcal A|_{\Ker \mathcal B} \right) = \dim \Ker  \left( \mathcal A|_{\Ker \mathcal B} \right) = \ind \Ann a$.  Item 2 of Corollary \ref{cor4} is then equivalent to the desired conclusion. \end{proof}

It would be interesting to understand if this observation could lead to another proof of the Elashvili conjecture  or/and to its generalisation to another classes of Lie algebras (not necessarily semisimple).

An example of a strict inequality in \eqref{indexconj} is given in the next section where we discuss the Lie algebra $\mathrm{gl}(n)+\R^{n^2}$.

The above discussion can be helpful to answer the following question.
 Let $\lambda=\lambda(x,a)$ be a characteristic number of a Lie algebra $\goth g$, i.e. $x+\lambda a\in \Sing $. What can we say about the number and sizes of the corresponding Jordan $\lambda$-blocks?
 
\begin{proposition}\label{predlast}
\quad

{\rm 1)} The number of Jordan $\lambda$-blocks is equal to $\frac{1}{2}(\dim \Ann (x+\lambda a) - \ind\goth g)$.

{\rm 2)} The number of non-trivial $\lambda$-blocks  (i.e. of size greater than $2\times 2$) is equal to
$\frac{1}{2}(\ind \Ann (x+\lambda a) - \ind\goth g)$.
\end{proposition}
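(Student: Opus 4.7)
My plan is to deduce both statements directly from Corollary \ref{cor4} applied to the pencil $\mathcal P=\{\mathcal A_x+\lambda'\mathcal A_a\}$ generated by our fixed generic pair $(x,a)$, with characteristic number $\mu=\lambda$. The only real content beyond a translation of notation is the identification of the subspace and the form that appear in items 3 and 4 of that corollary.

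For part 1, the translation is immediate. The kernel of $\mathcal A_x+\lambda\mathcal A_a=\mathcal A_{x+\lambda a}$ is by definition $\Ann(x+\lambda a)$, and since $(x,a)$ is a generic pair we have $\corank\mathcal P=\ind\goth g$. Substituting into item 3 of Corollary \ref{cor4} gives
$$
\tfrac12\bigl(\dim\Ann(x+\lambda a)-\ind\goth g\bigr),
$$
as claimed.

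For part 2, the quantity to understand is $\corank\bigl(\mathcal A_x|_{\Ann(x+\lambda a)}\bigr)$. Here I would proceed as in the proof of Proposition \ref{proindex}. Set $\goth h=\Ann(x+\lambda a)$; this is a Lie subalgebra of $\goth g$ since it is the isotropy algebra of a coadjoint orbit. For any $\xi,\eta\in\goth h$ one has $\mathcal A_x(\xi,\eta)=\langle x,[\xi,\eta]\rangle$, and because $[\xi,\eta]\in\goth h$, this only depends on the restriction $\pi(x)\in\goth h^*$, where $\pi:\goth g^*\to\goth h^*$ is the natural projection. Thus $\mathcal A_x|_{\goth h}$ is precisely the Lie--Poisson form of $\goth h$ evaluated at $\pi(x)$, so its corank equals $\dim\Ann_{\goth h}\pi(x)\ge\ind\goth h$, with equality when $\pi(x)\in\goth h^*$ is regular. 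Granting this equality, item 4 of Corollary \ref{cor4} yields the desired expression $\tfrac12\bigl(\ind\Ann(x+\lambda a)-\ind\goth g\bigr)$.

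The main (and essentially only) point requiring care is the claim that for a generic pair $(x,a)$ the covector $\pi(x)$ is regular in $\goth h^*$, i.e.\ that $\corank\mathcal A_x|_{\goth h}$ attains its minimal value $\ind\goth h$. This is where the hypothesis that the JK type of $\{\mathcal A_x+\lambda'\mathcal A_a\}$ is stable under small perturbations of $(x,a)$ is used: any drop below this minimum on an open set of pairs would force an additional Jordan-block degeneration, altering the JK type, contradicting genericity. Equivalently, the Zariski-open condition ``$\pi(x)$ is regular in $\Ann(x+\lambda a)^*$'' holds on a nonempty open subset of pairs and hence on the open set of generic pairs we have selected. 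Once this is granted, the two formulas follow by direct substitution.
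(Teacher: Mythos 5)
Your proof is correct and follows essentially the same route as the paper, whose entire proof is ``See items 3 and 4 of Corollary \ref{cor4}''; your identification of $\mathcal A_x|_{\Ann (x+\lambda a)}$ with the Lie--Poisson form of the subalgebra $\Ann (x+\lambda a)$ evaluated at $\pi(x)$ is precisely the argument the paper itself spells out for the analogous case $\mu=\infty$ in the proof of Proposition \ref{proindex}. The genericity claim that $\pi(x)$ is regular in $(\Ann(x+\lambda a))^*$ is asserted at the same level of rigor in the paper, so your treatment is, if anything, more explicit.
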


\begin{proof}
See items 3 and 4 of Corollary \ref{cor4}.
\end{proof}

This statement is useful for computing JK invariants  (see next Section).

\section{Examples}\label{examples}

There are only  a few examples  where JK invariants have been explicitly described.   In this section we discuss some types of Lie algebras for which this can be done.

\subsection{Semisimple case}

As was already mentioned,  a semisimple Lie algebra $\goth g$ is of  Kronecker type and its Kronecker indices $k_1,\dots, k_s$, $s=\ind \goth g=\rank \goth g$ coincide with   the degrees  of basis invariant polynomials of $\goth g$. Equivalently, $k_i = e_i + 1$, where $e_1,\dots, e_s$ are exponents of $\goth g$.

So for simple Lie algebras, the Kronecker indices are as follows:
\begin{itemize}
\item  $A_n$: \quad $2,3,4,\dots, n+1$;
\item $B_n$: \quad $2,4,6,\dots, 2n$;
\item $C_n$: \quad $2,4,6,\dots, 2n$;
\item $D_n$: \quad $2,4,6,\dots, 2n-2$ and $n$;
\item $G_2$: \quad $     2,6$;
\item $F_4$: \quad $      2, 6, 8,  12$;
\item $E_6$: \quad $     2, 5, 6, 8, 9, 12$;
\item $E_7$: \quad $      2, 6, 8, 10, 12, 14, 18$; 
\item $E_8$: \quad $      2, 8, 12, 14, 18, 20, 24, 30$.
\end{itemize}

\subsection{Semidirect sums}

As an example,  consider first the Lie algebra  $e(n)=\mathrm{so}(n) + \R^n$  of the  group of affine orthogonal transformations.
We know that the algebra $\mathcal F_a$ of shifts for this Lie algebra is complete \cite{bolsActa}.  This means that $e(n)$ is  of Kronecker type.
To determine the Kronecker indices $k_i$ of $e(n)$,  we may apply Proposition \ref{pv}.  It is well know that the basis coadjoint invariants of $e(n)$  have the same degrees $m_i$ as those of $\mathrm{so}(n+1)$  (in fact, there is a natural relationship between  the invariants of $\mathrm{so}(n+1)$ and $e(n)$ based on the fact that $e(n)$ can be obtained from $\mathrm{so}(n+1)$ by the so-called $\mathbb Z_2$-contraction).  Since in this case we have the exact equality
$$
 \sum m_i = \frac12\bigl(\dim e(n) + \ind e(n)\bigr) = \frac12\bigl(\dim \mathrm{so}(n+1) + \ind \mathrm{so}(n+1)\bigr),
$$
then the Kronecker indices of $e(n)$ are exactly $k_i=m_i$. In other words, the JK invariants of the Lie algebras $e(n)$ and $\mathrm{so}(n+1)$ coincide.

It is natural to conjecture that  a similar statement holds true in the following more general situation.  Let $\goth g$ be a semisimple Lie algebra with  $\Z_2$-grading:
$$
\goth g = \goth k + \goth p, \qquad  [\goth k,\goth k]\subset \goth k, \quad [\goth k,\goth p]\subset \goth p, \quad
[\goth p,\goth p]\subset \goth p,
$$

Then we can construct a new Lie algebra $\tilde{\goth g}$ that coincides with $\goth g$ as vector space and but  $\goth p$  becomes a commutative ideal  (whereas the commutation relations within $\goth k$ and between $\goth k$ and $\goth p$ remain the same as in $\goth g$).  In such a situation, one says that $\tilde{\goth g}$ is obtained from $\goth g$ by $\mathbb Z_2$-contraction.   
In the above example, $e(n)$ and $\mathrm{so}(n+1)$ are related exactly in this way.  Our conjecture is that the JK invariants of  $\tilde{\goth g}$ coincide with those of the semisimple Lie algebra~$\goth g$. In other words,  JK invariants survive under $\Z_2$-contraction.

Another interesting example of a semidirect sum is the Lie algebra 
$\goth g =\mathrm{sl}(n) + \R^n$.  In this case,  there is only one co-adjoint invariant polynomial. Its degree $m$ is exactly equal to 
$\frac12(\dim \goth g  + \ind \goth g) = \frac12(n^2  +n)$.  We also know that  $\mathcal F_a$ is complete \cite{bolsActa}.  Hence we conclude that 
$\goth g =\mathrm{sl}(n) + \R^n$ is a Lie algebra of Kronecker type with one Kronecker block whose size, therefore, equals to $\dim \goth g$.
Notice, however, that for this conclusion the information about the degree of the co-adjoint invariant is not essential.

More generally,  consider the semidirect sum $\goth g +_\phi V$, where $\goth g$ is simple and $\phi: \goth g \to \mathrm{End} (V)$ is irreducible.   Such Lie algebras are all of Kronecker type.  This fact amounts to the condition $\codim \Sing \ge 2$ which is not obvious at all and follows from three papers \cite{KnopLittl}, \cite{bolsActa}, \cite{priwitzer}.  For some of these Lie algebras, the Kronecker indices can be found by using Proposition \ref{pv}, but in general the question is open.

\subsection{Lie algebra of upper triangular matrices}

Let $\goth t_n$ be the Lie algebra of upper triangular $n\times n$ matrices.  The description of  Jordan-Kronecker invariants for $\goth t_n$ easily follows from a very interesting paper \cite{Arkh} by A.Arkhangelskii.  The main result of \cite{Arkh} is a proof of the generalised argument shift conjecture for $\goth t_n$  (the bracket $\{~,~\}_a$ was not discussed in  \cite{Arkh}, but  the complete family of commuting polynomials constructed by  A.Arkhangelskii  is, in fact,  in bi-involution).

If $n$ is even, then $\goth t_n$ is of mixed type, i.e., the Jordan-Kronecker  decomposition of a generic pencil $\{\mathcal A_{x+\lambda a}\}$ contains both Kronecker and Jordan blocks.  
The Kronecker indices are closely related to the coadjoint invariants of $\goth t_n$ explicitly  described in \cite{Arkh}. These invariants are rational functions $f_k = \frac{P_k}{Q_k}$, $k = 1,\dots, \frac{n}{2}$ with
$\deg P_k=k+1$ and $\deg Q_k=k$.  The Kronecker indices are exactly $\deg P_k + \deg Q_k$ (cf. Proposition \ref{pv} and discussion after Theorem \ref{Vorontsov}), namely
$$
1, 3,5, \dots, n-1.
$$

The singular set $\mathsf{Sing}_0\subset \goth t_n^*$  is defined by an irreducible polynomial $\mathsf{f}$ of degree $n$. Therefore,  $\goth t_n$ possesses $n$ distinct characteristic numbers, each  of multiplicity one.  In particular, the Jordan part of a generic pencil $\mathcal A_{x+\lambda a}$ is diagonalisable.

A complete family of polynomials in bi-involution is formed by the ``shifts'' $P_k(x+\lambda a)$, $Q_k(x+\lambda a)$ and $\mathsf{f}(x+\lambda a)$.

If $n$ is odd, then $\goth t_n$ is of Kronecker type and the Kronecker indices are
$1,3,5, \dots, n$.

\subsection{Lie algebras with arbitrarily given JK invariants}

Let $\mathcal P =\{ \mathcal A + \lambda \mathcal B\}$ be an arbitrary pencil of skew-symmetric bi-linear forms.  
A natural question to ask is whether   $\mathcal P$ can be realised as a generic pencil $\mathcal A_{x+\lambda a}$ for a suitable Lie algebra $\goth g$?  In other words, we  want to describe all admissible 
Jordan-Kronecker invariants  of  finite dimensional  Lie algebras. 

First of all, notice that the JK invariants of a direct sum $\goth g_1 \oplus \goth g_2$ can naturally be obtained from those of $\goth g_1$ and $\goth g_2$ by ``summation''. In particular, the set of characteristic numbers for $\goth g_1 \oplus \goth g_2$ can be understood as the {\it disjoint} union of the corresponding sets for $\goth g_1$ and  $\goth g_2$. Thus,  first it is natural to study the realisation problem for the following simplest cases:
\begin{itemize}
\item a single Kronecker block,
\item a single $\lambda$-block  which consists of several Jordan blocks.  
\end{itemize}

\noindent Examples of such Lie algebras were constructed and communicated to us by I.~Kozlov \cite{kozlov2}.  

The first case can be realised by the Lie algebra $\goth g$ 
 with the basis $e_1,\dots  e_k$, $f_1,\dots, f_{k+1}$ and commutation relations:
$$
[e_{l}, f_i]= f_{i}, \quad [e_i, f_{i+1}]=f_{i+1}, \qquad i=1,\dots, k.
$$
This Lie algebra  admits the following matrix representation
$$
\begin{pmatrix}
A & b\\ 0 & 0
\end{pmatrix} \in \mathrm{gl}(k+2,\mathbb C),
$$
where $A$ denotes the matrix $\mathrm{diag}(a_1, a_2-a_1, a_3-a_2,\dots, a_{k}-a_{k-1}, -a_k)$, i.e., an arbitrary diagonal matrix with zero trace, and $b$ is a column of length $k+1$ with arbitrary entries.

 The index of $\goth g$ equals 1. The singular set $\mathsf{Sing}$ consists of several connected components each of which has codimension 2 and is defined by two linear equations $f_i=0, f_j=0$, $i\ne j$.  The Casimir function of the Lie-Poisson bracket on $\goth g^*$ is $f_1f_2\cdot\ldots\cdot f_{k+1}$.

The second case can be realised by the following matrix Lie algebra
$$
\goth g = \left\{ \begin{pmatrix}
a_0 &  x_1  & x_2   & \dots & x_m & b_0\\
        &  A_1 &  0      & \dots  &  0     & y_1\\
        &          &  A_2 & \ddots  &  \vdots     &  \vdots\\
        &          &           & \ddots  &   0     &  y_{m-1}\\
        &          &           &              &   A_m   & y_m \\
  0    &     0   &   0     &  \dots   &    0        &   0 
 \end{pmatrix}\right\}.
$$
Here  $x_k$ is an arbitrary row of length $n_k$,  $y_k$ is an arbitrary column of length $n_k$,  and $A_k$ is the $n_k\times n_k$-matrix related to the row $x_k = (x_k^1, \dots , x_k^{n_k})$  in the following way:
$$
A_k =
\begin{pmatrix}
a_0 & x_k^1 & x_k^2 & \dots & \!\!\! x_k^{n_k-2} & \!\!\! x_k^{n_k-1} \\  
        &  a_0  &  x_k^1 & \ddots &            & \!\!\! x_k^{n_k-2} \\
        &           &  a_0     &  \ddots           &                       & \!\!\!\!\!\! \vdots \\
        &           &              & \ddots &      \!\!\!   x_k^1               &\!\!\! \!\!\! x_k^2\\
        &           &              &             &    \!\!\!     a_0               & \!\!\!\!\!\! x_k^1\\
       &            &              &             &                        &  \!\!\!\!\!\!   a_0      
\end{pmatrix}
$$

This Lie algebra is Frobenius, its singular set $\mathsf{Sing} \subset \goth g^*$ is defined by the linear equation $f_0$=0, where $f_0\in \goth g$ is the matrix whose entries are all zero except for $b_0=1$ in the upper right corner. Let $n_1 = \max_{k=1,\dots, m} n_k$. Then the Jordan-Kronecker decomposition of a generic pencil $\mathcal A_{x+\lambda a}$ consists of Jordan blocks of sizes $2(n_1+1), 2n_2,\dots,  2n_m$.

Notice that the sizes of these Jordan blocks can be arbitrary with the only restriction.  Namely,  the largest Jordan block is unique, as by construction $n_1+1 > n_k$.  This restriction turns out to be a general property of non-degenerate Poisson pencils with non-constant characteristic numbers (see \cite{turiel}) and, therefore, is unavoidable.   For example, there is no Frobenius Lie algebra with diagonalisable $\lambda$-blocks if the multiplicity of $\lambda$ is greater than 1. 

However this restriction disappears if we allow Kronecker blocks. The simplest example which illustrates this phenomenon is the Heisenberg algebra with the basis $e_i$, $f_i$, $h$ $(i=1,\dots, n)$
and relations $[e_i, f_j] = \delta_{ij} h$.  A generic pencil $\mathcal A_{x+\lambda a}$ consists of one trivial Kronecker block and $n$  Jordan $2\times 2$ blocks with the same characteristic number $\lambda = -\frac{\langle h, x\rangle}{\langle h, a \rangle}$.

We hope that these observations will help to solve the realisation problem completely, but  so far this problem  remains open. The difficulty consists in non-trivial relations between Casimir functions and characteristic numbers.  By ``non-trivial''  we mean that the characteristic numbers can, in general,  be functionally dependent of the Casimir functions.  If it is not the case, then the splitting theorem recently proved by F.-J.Turiel \cite{tur3} implies that the Jordan-Kronecker invariants of a finite-dimensional Lie algebra $\goth g$ obey the restriction described above: for each characteristic number, the  largest Jordan block is unique.

\subsection{Lie algebras of low dimension}

The JK invariants for Lie algebras of low dimension  $\le 5$ (the list of such Lie algebras with some additional useful information can be found in   \cite{patera} and \cite{korot}) have been explicitly computed by Pumei Zhang \cite{PumeiDiss}.  The result is presented in the Appendix.

\subsection{Two examples of Frobenius Lie algebras}
 
The first example is  the Lie algebra  $\goth{aff}(n) = \mathrm{gl}(n) + \R^n$ of the group of affine transformations.  This Lie algebra is Frobenius and, therefore,  $\goth{aff}(n)$ is of Jordan type. To determine the sizes of Jordan blocks,  we need to describe the structure of the singular set.
It can be shown that $\Sing$ is defined by one irreducible polynomial
of degree  $\frac{1}{2}\dim \goth{aff}(n)$  (this polynomial is exactly the Pfaffian of the form $\mathcal A_x=\Bigl(\sum c_{ij}^k x_k\Bigr)$ which can, in fact, be written in much nicer form \eqref{affinv}, see \cite{PumeiDiss} for details).  Hence, by Theorem \ref{thfrob},  this Lie algebra has  $\frac12 \dim\goth{aff}(n)$ distinct characteristic numbers.  Each of them has multiplicity 1, i.e., a generic pencil $\mathcal A_{x+\lambda a}$ is diagonalisable, i.e., the sizes of Jordan blocks for $\goth{aff}(n)$ are
    $$
    \underbrace{2, \, 2, \, \dots \, , \, 2}_{\mbox{$k$ times}}, \quad k=\frac12(n^2+n)=\frac12\dim \goth{aff}(n).
    $$

Another interesting example is $\goth g = \mathrm{gl}(n) + \R^{n^2}$, where the vector space $\R^{n^2}$ is realised by $n\times n$ matrices, and the action of $\mathrm{gl}(n)$ on it is left multiplication. The matrix realisation of  $\goth g$ is as follows:
    $$
    \begin{pmatrix}
    A & C \\ 0 & 0
    \end{pmatrix},
    $$
where all entries are  $n\times n$ blocks,  and $A$ and $C$ are arbitrary.  The index of $\goth g$ is zero and, therefore, this Lie algebra is of Jordan type. The set of singular elements is defined (after natural, but not invariant identification of $\goth g$ and $\goth g^*$ by means of the pairing $\langle  (A_1,C_1), (A_2, C_2) \rangle = \Tr \, A_1A_2 + \Tr\, C_1C_2$, $(A_i, C_i)\in\goth g$) by the equation\footnote{The Pfaffian of $\mathcal A_x$ in this case is $\mathsf{f}(x)=\bigl(\mathsf{f}_{\mathrm{red}}(x)\bigr)^n$}
$$
\mathsf{f}_{\mathrm{red}}(x)=\det C =0, \quad  x=(A,C)\in\goth g^*.
$$

Since the (geometric) degree of $\Sing$ is $n$, there are $n$ distinct characteristic numbers $\lambda_1, \dots, \lambda_n$.  Moreover,  the irreducibility of $\Sing$ implies (Proposition  \ref{reducible}, item 3) that there is no essential difference between them so that all of them have the same multiplicity $n$ and the sizes of Jordan blocks are the same for  each $\lambda_i$.

To compute the sizes of Jordan blocks, it is sufficient to compute the annihilator of a typical singular point $y\in\Sing\subset \goth g^*$.
Straightforward computation shows that  $\dim\Ann y = 2n-2$.  Hence (see Proposition  \ref{predlast}) we have $n-1$ Jordan blocks and there is only one possibility for their sizes, namely\footnote{This, by the way, automatically implies $\ind\Ann y = 2 > \ind\goth g$.}:
$$
\underbrace{2, \, 2, \, \dots \, , \, 2}_{\mbox{$n-2$}}, 4.
$$

\subsection{Generalised argument shift conjecture}

For all Lie algebra listed above, the generalised argument shift conjecture holds true.  

In the semisimple case,  this follows from the Mischenko-Fomenko theorem \cite{MischFom}.  For the semidirect sums
$\goth g +_\phi V$, where $\goth g$ is simple and $\phi: \goth g \to \mathrm{End} (V)$ is irreducible,  we use the fact that all of them are of Kronecker type.  So in these two cases, the algebra of shifts $\mathcal F_a$ is complete and in-bi-involution.

The Lie algebras of dimension $\le 5$ have been studied case by case in \cite{PumeiDiss} and  explicit description of complete sets $\mathcal G_a$  of polynomials  in bi-involution are indicated in the Appendix.

The Lie algebra $\goth{aff}(n)$ is more interesting. To describe $\mathcal G_a$ explicitly, we will need an explicit formula for the polynomial $\mathsf{f}$ that defines the singular set. To that end,  we use the standard matrix realisation of $\goth{aff}(n)$:
$$
\begin{pmatrix}
M & v \\ 0_n & 0
\end{pmatrix}
$$
where $M$ is an arbitrary $n\times n$ matrix, and $v$ is a vector-column of length $n$. If we identify  this Lie algebra $\goth{aff}(n)$  with its dual space $\goth{aff}(n)^*$ by means of (non-invariant) pairing
$$
\langle (M_1, v_1), (M_2, v_2) \rangle = \Tr\, M_1 M_2 + \Tr \, v^\top_1 v_2
$$
then $\Sing$ can be defined by the equation $\mathsf{f}(x)=0$, where
\begin{equation}
\label{affinv}
\mathsf{f}(x)=\det (v, Mv, M^2 v, \dots, M^{n-1} v),  \quad x=(M,v)\in \goth{aff}^*(n).
\end{equation}

By using Theorem  \ref{thfrob}, we get

\begin{proposition}
For the Lie algebra $\goth{aff}(n)$, the generalised argument shift conjecture holds true.  As a complete family of polynomials in bi-involution we can take the coefficients of the expansion of 
$\mathsf{f}(x + \lambda a)$ into powers of  $\lambda$, where $\mathsf{f}$ is given by \eqref{affinv}.
\end{proposition}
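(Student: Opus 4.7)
The strategy is to reduce the statement to a direct application of Theorem \ref{thfrob}. To do so, I need to verify three things for $\goth g = \goth{aff}(n)$: (a) $\goth g$ is Frobenius; (b) the codimension-one singular set $\Sing_0$ is cut out by the polynomial $\mathsf{f}$ of \eqref{affinv}; (c) the geometric degree of $\Sing_0$ is $\tfrac{1}{2}\dim\goth g = \tfrac{n(n+1)}{2}$. Claim (a) has already been recorded earlier in this subsection, so the real content lies in (b) and (c).

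For (b), I would work in coordinates $(M,v)\in\mathrm{gl}(n)\times\mathbb C^n$ coming from the identification $\goth g\cong\goth g^*$ via $\langle(M_1,v_1),(M_2,v_2)\rangle=\Tr M_1 M_2 + v_1^\top v_2$. A direct computation of the bracket of $\goth{aff}(n)$ gives the coadjoint action
$$
\ad^*_{(A,u)}(M,v) = \bigl([A,M]+uv^\top,\ -A^\top v\bigr),
$$
so $(M,v)$ lies in $\Sing$ precisely when the system $[A,M]+uv^\top=0$, $A^\top v=0$ admits a nonzero solution $(A,u)\in \goth g$. A standard cyclic-vector argument, exploiting that $A^\top v=0$ combined with the rank-one relation $[A,M]=-uv^\top$ forces $v,Mv,\dots,M^{n-1}v$ to fail to span $\mathbb C^n$, and conversely, then identifies $\Sing_0$ with $\{\mathsf{f}=0\}$. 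Counting degrees of the columns in \eqref{affinv} yields $\deg\mathsf{f}=n+(0+1+\dots+(n-1))=\tfrac{n(n+1)}{2}$, which already matches $\tfrac{1}{2}\dim\goth g$.

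For (c) one still needs $\mathsf{f}$ to have no repeated irreducible factors, so that this algebraic degree coincides with the geometric degree of $\Sing_0$. The most satisfying route is to show directly that $\mathsf{f}$ is irreducible: the non-cyclic locus is the image of the incidence variety $\{(M,v,H):v\in H,\ MH\subseteq H\}$, where $H$ ranges over hyperplanes of $\mathbb C^n$, under projection to $(M,v)$, and this incidence variety is irreducible of the expected dimension. Reducedness follows because $d\mathsf{f}$ does not vanish identically on $\{\mathsf{f}=0\}$, as one can check at a generic point where exactly one of $v,Mv,\dots,M^{n-1}v$ is a linear combination of the others. Granted (a)--(c), Theorem \ref{thfrob} applies verbatim and gives that a generic pencil $\mathcal A_x+\lambda\mathcal A_a$ is diagonalizable with $\tfrac{n(n+1)}{2}$ distinct characteristic numbers, and that the coefficients of $\mathsf{p}(\lambda)=\mathsf{f}(x+\lambda a)$ form a complete family of polynomials in bi-involution for the brackets \eqref{LiePoisson} and \eqref{abracket}.

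The main obstacle in the plan is item (c), i.e.\ the irreducibility (or at least squarefreeness) of $\mathsf{f}$. The coadjoint computation in (b) and the degree count are essentially bookkeeping, but (c) is a classical but non-tautological algebro-geometric statement about pairs $(M,v)$. A pragmatic alternative, bypassing the full irreducibility question, is to exhibit a single line $x+\lambda a$ that meets $\Sing_0$ transversally in exactly $\tfrac{n(n+1)}{2}$ distinct points; by B\'ezout this forces the geometric degree of $\Sing_0$ to equal $\deg\mathsf{f}$ and thereby triggers Theorem \ref{thfrob} without needing irreducibility at all.
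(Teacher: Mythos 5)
Your proposal is correct and follows essentially the same route as the paper: identify $\Sing$ for $\goth{aff}(n)$ with the zero locus of the polynomial $\mathsf{f}$ of \eqref{affinv}, note that $\deg\mathsf{f}=\tfrac12\dim\goth{aff}(n)$ and that $\mathsf{f}$ is irreducible (equivalently, that the geometric degree of $\Sing$ equals $\tfrac12\dim\goth{aff}(n)$), and then apply Theorem \ref{thfrob}. The one nontrivial point you single out --- squarefreeness/irreducibility of $\mathsf{f}$ --- is exactly the point the paper also does not prove in the text but delegates to \cite{PumeiDiss}; your sketch via the incidence variety of non-cyclic pairs, or the shortcut of exhibiting one line with $\tfrac{n(n+1)}{2}$ distinct intersection points, is a reasonable way to close it.
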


In the case $\goth g= \mathrm{gl}(n)+\mathbb R^{n^2}$, this method does not work as the characteristic numbers have multiplicity $n$. But in this case the ideal $\goth h=\mathbb R^{n^2}\subset 
 \mathrm{gl}(n)+\mathbb R^{n^2}$ is commutative and therefore $P(\goth h)\subset P(\goth g)$ can be taken as the desired  algebra $\mathcal G_a$ of polynomials in bi-involution. The completeness is obvious as $n^2$ is exactly $\frac{1}{2}(\dim \goth g +\ind\goth g)$.




\newpage

\section{Appendix}


\vskip 40pt

\hskip-40pt \footnotesize{
\begin{tabular}{|c| c| c| c| c| c|}
\multicolumn{6}{c}{\normalsize Table of low-dimensional Lie algebras}\\[5pt]
\hline
 \begin{tabular}{l}\textbf{Name} \\ \textbf{and} \\ \textbf{Index} \end{tabular}  & \textbf{Relations} & \begin{tabular}{l}\textbf{Jordan--Kronecker} \\  \textbf{invariant} \end{tabular} & \begin{tabular}{l} \textbf{Char.}\\ \textbf{number} \end{tabular}  & \textbf{Singular set}  &  \textbf{Family $\mathcal{G}_a$}\\\hline\hline

& & & & &\\
$\begin{matrix}A_{3,1}\\
(\mathrm{ind}\, = 1)
\end{matrix}
$
 &
\begin{tabular}{l}
$[e_2,e_3]=e_1$
\end{tabular}
&
\begin{tabular}{l}
$\lambda$-block of size $2 {\times} 2$,  \\
 and\\
$\mathcal{K}$-block of size $1 {\times} 1$  
\end{tabular}
&
$
\begin{matrix}
\lambda=-\frac{x_1}{a_1}
\end{matrix}
$
&
$
\begin{matrix}
x_1=0, \\
\mathrm{codim}\, S=1
\end{matrix}
$
&
$x_1, x_2$\\
& & & & & \\[-2ex]\hline

& & & & &\\
$\begin{matrix}A_{3,2}\\
(\mathrm{ind}\, = 1)
\end{matrix}
$
 &
\begin{tabular}{l}
$[e_1,e_3]=e_1$,\\
$[e_2,e_3]=e_1+e_2$
\end{tabular}
&
\begin{tabular}{l}
$\mathcal{K}$-block of size $3 {\times} 3$  
\end{tabular}
&

&
$
\begin{matrix}
x_1=0, \\
x_2=0,\\
\mathrm{codim}\, S=2
\end{matrix}
$
&
$x_1, x_2$\\
& & & & & \\[-2ex]\hline

& & & & &\\
$\begin{matrix}A_{3,3}\\
(\mathrm{ind}\, = 1)
\end{matrix}
$
 &
\begin{tabular}{l}
$[e_1,e_3]=e_1$,\\
$[e_2,e_3]=e_2$
\end{tabular}
&
\begin{tabular}{l}
$\mathcal{K}$-block of size $3 {\times} 3$  
\end{tabular}
&

&
$
\begin{matrix}
x_1=0, \\
x_2=0,\\
\mathrm{codim}\, S=2
\end{matrix}
$
&
$x_1, x_2$\\
& & & & & \\[-2ex]\hline

& & & & &\\
$\begin{matrix}A_{3,4}\\
(\mathrm{ind}\, = 1)
\end{matrix}
$
 &
\begin{tabular}{l}
$[e_1,e_3]=e_1$,\\
$[e_2,e_3]=-e_2$
\end{tabular}
&
\begin{tabular}{l}
$\mathcal{K}$-block of size $3 {\times} 3$  
\end{tabular}
&

&
$
\begin{matrix}
x_1=0, \\
x_2=0,\\
\mathrm{codim}\, S=2
\end{matrix}
$
&
$x_1, x_2$\\
& & & & & \\[-2ex]\hline

& & & & &\\
$\begin{matrix}A_{3,5}^a\\
(\mathrm{ind}\, = 1)
\end{matrix}
$
 &
\begin{tabular}{l}
$[e_1,e_3]=e_1$,\\
$[e_2,e_3]=ae_2$\\
$(0<|a|<1)$
\end{tabular}
&
\begin{tabular}{l}
$\mathcal{K}$-block of size $3 {\times} 3$  
\end{tabular}
&

&
$
\begin{matrix}
x_1=0, \\
x_2=0,\\
\mathrm{codim}\, S=2
\end{matrix}
$
&
$x_1, x_2$\\
& & & & & \\[-2ex]\hline

& & & & &\\
$\begin{matrix}A_{3,6}\\
(\mathrm{ind}\, = 1)
\end{matrix}
$
 &
\begin{tabular}{l}
$[e_1,e_3]=-e_2$,\\
$[e_2,e_3]=e_1$
\end{tabular}
&
\begin{tabular}{l}
$\mathcal{K}$-block of size $3 {\times} 3$  
\end{tabular}
&

&
$
\begin{matrix}
x_1=0, \\
x_2=0,\\
\mathrm{codim}\, S=2
\end{matrix}
$
&
$x_1, x_2$\\
& & & & & \\[-2ex]\hline

& & & & &\\
$\begin{matrix}A_{3,7}^a\\
(\mathrm{ind}\, = 1)
\end{matrix}
$
 &
\begin{tabular}{l}
$[e_1,e_3]=ae_1-e_2$,\\
$[e_2,e_3]=e_1+ae_2$\\
$(a>0)$
\end{tabular}
&
\begin{tabular}{l}
$\mathcal{K}$-block of size $3 {\times} 3$  
\end{tabular}
&

&
$
\begin{matrix}
x_1=0, \\
x_2=0\\
\mathrm{codim}\, S=2
\end{matrix}
$
&
$x_1, x_2$\\
& & & & & \\[-2ex]\hline

& & & & &\\
$\begin{matrix}A_{3,8}\\
(\mathrm{ind}\, = 1)
\end{matrix}
$
 &
\begin{tabular}{l}
$[e_1,e_3]=-2e_2$,\\
$[e_1,e_2]=e_1$,\\
$[e_2,e_3]=e_3$
\end{tabular}
&
\begin{tabular}{l}
$\mathcal{K}$-block of size $3 {\times} 3$  
\end{tabular}
&

&
$
\begin{matrix}
x_1=0, \\
x_2=0,\\
x_3=0,\\
\mathrm{codim}\, S=3
\end{matrix}
$
&
$
\begin{array}{c}
2(x_2^2+x_1x_3),\\ \\
2(a_3x_1+2a_2x_2+a_1x_3)
\end{array}$\\ \hline

& & & & &\\
$\begin{matrix}A_{3,9}\\
(\mathrm{ind}\, = 1)
\end{matrix}
$
 &
\begin{tabular}{l}
$[e_1,e_2]=e_3$,\\
$[e_2,e_3]=e_1$,\\
$[e_3,e_1]=e_2$
\end{tabular}
&
\begin{tabular}{l}
$\mathcal{K}$-block of size $3 {\times} 3$  
\end{tabular}
&

&
$
\begin{matrix}
x_1=0, \\
x_2=0,\\
x_3=0,\\
\mathrm{codim}\, S=3
\end{matrix}
$
&
$\begin{array}{c}
x_1^2+x_2^2+x_3^2,\\ \\
2(a_1x_1+a_2x_2+a_3x_3)
\end{array}$\\
& & & & & \\  \hline

\end{tabular}

\newpage


$\quad$
\vskip-80pt
\hskip-50pt {\footnotesize\begin{tabular}{|c| c| c| c| c| c|}\hline
 \begin{tabular}{l}\textbf{Name} \\ \textbf{and} \\ \textbf{Index} \end{tabular}  & \textbf{Relations} & \begin{tabular}{l}\textbf{Jordan--Kronecker} \\  \textbf{invariant} \end{tabular} & \begin{tabular}{l} \textbf{Char.}\\ \textbf{number} \end{tabular}  & \textbf{Singular set}  &  \textbf{Family $\mathcal{G}_a$}\\\hline\hline

 & & & & &\\
$\begin{matrix}A_{4,1}\\
(\mathrm{ind}\, = 2)
\end{matrix}
$
 &
\begin{tabular}{l}
$[e_2,e_4]=e_1$,\\
$[e_3,e_4]=e_2$
\end{tabular}
&
\begin{tabular}{l}
$\mathcal{K}$-block of size $3 {\times} 3$,  \\
$\mathcal{K}$-block of size $1 {\times} 1$  
\end{tabular}
&

&
$
\begin{matrix}
x_1=0, \\
x_2=0,\\
\mathrm{codim}\, S=2
\end{matrix}
$
&
$x_1, x_2 , x_3$\\
& & & & & \\[-2ex]\hline

 & & & & &\\
$\begin{matrix}A_{4,2}^a\\
(\mathrm{ind}\, = 2)\\
a\ne 0
\end{matrix}
$
 &
\begin{tabular}{l}
$[e_1,e_4]=ae_1$,\\
$[e_2,e_4]=e_2$,\\
$[e_3,e_4]=e_2+e_3$
\end{tabular}
&
\begin{tabular}{l}
$\mathcal{K}$-block of size $3 {\times} 3$,  \\
$\mathcal{K}$-block of size $1 {\times} 1$  
\end{tabular}
&

&
$
\begin{matrix}
x_1=0, \\
x_2=0,\\
x_3=0,\\
\mathrm{codim}\, S=3
\end{matrix}
$
&
$x_1, x_2 , x_3$\\
& & & & & \\[-2ex]\hline

 & & & & &\\
$\begin{matrix}A_{4,3}\\
(\mathrm{ind}\, = 2)
\end{matrix}
$
 &
\begin{tabular}{l}
$[e_1,e_4]=e_1$,\\
$[e_3,e_4]=e_2$
\end{tabular}
&
\begin{tabular}{l}
$\mathcal{K}$-block of size $3 {\times} 3$,  \\
$\mathcal{K}$-block of size $1 {\times} 1$  
\end{tabular}
&

&
$
\begin{matrix}
x_1=0, \\
x_2=0,\\
\mathrm{codim}\, S=2
\end{matrix}
$
&
$x_1, x_2 , x_3$\\
& & & & & \\[-2ex]\hline

 & & & & &\\
$\begin{matrix}A_{4,4}\\
(\mathrm{ind}\, = 2)
\end{matrix}
$
 &
\begin{tabular}{l}
$[e_1,e_4]=e_1$,\\
$[e_2,e_4]=e_1+e_2$,\\
$[e_3,e_4]=e_2+e_3$
\end{tabular}
&
\begin{tabular}{l}
$\mathcal{K}$-block of size $3 {\times} 3$,  \\
$\mathcal{K}$-block of size $1 {\times} 1$  
\end{tabular}
&

&
$
\begin{matrix}
x_1=0, \\
x_2=0,\\
x_3=0,\\
\mathrm{codim}\, S=3
\end{matrix}
$
&
$x_1, x_2 , x_3$\\
& & & & & \\[-2ex]\hline

 & & & & &\\
$\begin{matrix}A_{4,5}^{ab}\\
(\mathrm{ind}\, = 2)\\
(ab\ne0)
\end{matrix}
$
 &
\begin{tabular}{l}
$[e_1,e_4]=e_1$,\\
$[e_2,e_4]=ae_2$,\\
$[e_3,e_4]=be_3$,\\
$(-1\le a \le b \le 1)$
\end{tabular}
&
\begin{tabular}{l}
$\mathcal{K}$-block of size $3 {\times} 3$,  \\
$\mathcal{K}$-block of size $1 {\times} 1$  
\end{tabular}
&

&
$
\begin{matrix}
x_1=0, \\
x_2=0,\\
x_3=0,\\
\mathrm{codim}\, S=3
\end{matrix}
$
&
$x_1, x_2 , x_3$\\
& & & & & \\[-2ex]\hline

 & & & & &\\
$\begin{matrix}A_{4,6}^{ab}\\
(\mathrm{ind}\, = 2)\\
(a\ne 0),\\
(b\ge 0)
\end{matrix}
$
 &
\begin{tabular}{l}
$[e_1,e_4]=ae_1$,\\
$[e_2,e_4]=be_2-e_3$,\\
$[e_3,e_4]=e_2+be_3$
\end{tabular}
&
\begin{tabular}{l}
$\mathcal{K}$-block of size $3 {\times} 3$,  \\
$\mathcal{K}$-block of size $1 {\times} 1$  
\end{tabular}
&

&
$
\begin{matrix}
x_1=0, \\
x_2=0,\\
x_3=0,\\
\mathrm{codim}\, S=3
\end{matrix}
$
&
$x_1, x_2 , x_3$\\\hline

& & & & &\\
$\begin{matrix}A_{4,7}\\
(\mathrm{ind}\, = 0)
\end{matrix}
$
 &
$
\begin{tabular}{l}
$[e_2,e_3]=e_1$,\\
$[e_1,e_4]=2e_1$,\\
$[e_2,e_4]=e_2$,\\
$[e_3,e_4]=e_2+e_3$
\end{tabular}
$ 
&
\begin{tabular}{l}
$\lambda$-block of size $4 {\times} 4$ 
\end{tabular}
&
$
\begin{matrix}
\lambda=-\frac{x_1}{a_1}
\end{matrix}
$
&
$
\begin{matrix}
x_1=0, \\
\mathrm{codim}\, S=1
\end{matrix}
$
&
$x_1, x_2$\\
& & & & & \\[-2ex]\hline

 & & & & &\\
$\begin{matrix}A_{4,8}\\
(\mathrm{ind}\, = 2)
\end{matrix}
$
 &
\begin{tabular}{l}
$[e_2,e_3]=e_1$,\\
$[e_2,e_4]=e_2$,\\
$[e_3,e_4]=-e_3$
\end{tabular}
&
\begin{tabular}{l}
$\mathcal{K}$-block of size $3 {\times} 3$,  \\
$\mathcal{K}$-block of size $1 {\times} 1$  
\end{tabular}
&

&
$
\begin{matrix}
x_1=0, \\
x_2=0,\\
x_3=0,\\
\mathrm{codim}\, S=3
\end{matrix}
$
&
$
\begin{array}{c}
x_1,\\
2(x_2x_3-x_1x_4),\\
2(-a_4x_1+a_3x_2+\\
+a_2x_3-a_1x_4)
\end{array}
$\\
& & & & & \\[-2ex]\hline

& & & & &\\
$\begin{matrix}A_{4,9}^b\\
(\mathrm{ind}\, = 0)\\
(-1<b\le 1)
\end{matrix}
$
 &
$
\begin{tabular}{l}
$[e_2,e_3]=e_1$,\\
$[e_1,e_4]=(1+b)e_1$,\\
$[e_2,e_4]=e_2$,\\
$[e_3,e_4]=be_3$,
\end{tabular}
$ 
&
\begin{tabular}{l}
$\lambda$-block of size $4 {\times} 4$ 
\end{tabular}
&
$
\begin{matrix}
\lambda=-\frac{x_1}{a_1}
\end{matrix}
$
&
$
\begin{matrix}
x_1=0, \\
\mathrm{codim}\, S=1
\end{matrix}
$
&
$x_1, x_2$\\
& & & & & \\[-2ex]\hline

 & & & & &\\
$\begin{matrix}A_{4,10}\\
(\mathrm{ind}\, = 2)
\end{matrix}
$
 &
\begin{tabular}{l}
$[e_2,e_3]=e_1$,\\
$[e_2,e_4]=-e_3$,\\
$[e_3,e_4]=e_2$
\end{tabular}
&
\begin{tabular}{l}
$\mathcal{K}$-block of size $3 {\times} 3$,  \\
$\mathcal{K}$-block of size $1 {\times} 1$  
\end{tabular}
&

&
$
\begin{matrix}
x_1=0, \\
x_2=0,\\
x_3=0,\\
\mathrm{codim}\, S=3
\end{matrix}
$
&
$
\begin{array}{c}
x_1,\\ 
2x_1x_4+x_2^2+x_3^2,\\ 
2(a_4x_1+a_2x_2+\\
+a_3x_3+a_1x_4)
\end{array}
$\\
& & & & & \\[-2ex]\hline

& & & & &\\
$\begin{matrix}A_{4,11}^a\\
(\mathrm{ind}\, = 0)
\end{matrix}
$
 &
$
\begin{tabular}{l}
$[e_2,e_3]=e_1$,\\
$[e_1,e_4]=2ae_1$,\\
$[e_2,e_4]=ae_2-e_3$,\\
$[e_3,e_4]=e_2+ae_3$,\\
$(a>0)$
\end{tabular}
$ 
&
\begin{tabular}{l}
$\lambda$-block of size $4 {\times} 4$ 
\end{tabular}
&
$
\begin{matrix}
\lambda=-\frac{x_1}{a_1}
\end{matrix}
$
&
$
\begin{matrix}
x_1=0, \\
\mathrm{codim}\, S=1
\end{matrix}
$
&
$x_1, x_2$\\
& & & & & \\[-2ex]\hline

& & & & &\\
$\begin{matrix}A_{4,12}\\
(\mathrm{ind}\, = 0)
\end{matrix}
$
 &
$
\begin{tabular}{l}
$[e_1,e_3]=e_1$,\\
$[e_2,e_3]=e_2$,\\
$[e_1,e_4]=-e_2$,\\
$[e_2,e_4]=e_1$
\end{tabular}
$ 
&
\begin{tabular}{l}
$\lambda_1$-block of size $2 {\times} 2$,  \\
$\lambda_2$-block of size $2  {\times} 2$,  \\
$\lambda_1\ne \lambda_2$ 
\end{tabular}
&
$
\begin{matrix}
\lambda_1=-\frac{x_1+ix_2}{a_1+ia_2}\\
\lambda_2=-\frac{x_1-ix_2}{a_1-ia_2}
\end{matrix}
$
&
$
\begin{matrix}
x_1^2+x_2^2=0, \\
\mathrm{codim}\, S=1
\end{matrix}
$
&
$x_1, x_2$\\\hline
\end{tabular}

$\quad$
\vskip-80pt
\hskip-45pt \footnotesize{\begin{tabular}{|c| c| c| c| c| c|}\hline
 \begin{tabular}{l}\textbf{Name} \\ \textbf{and} \\ \textbf{Index} \end{tabular}  & \textbf{Relations} & \begin{tabular}{l}\textbf{Jordan--Kronecker} \\  \textbf{invariant} \end{tabular} & \begin{tabular}{l} \textbf{Char.}\\ \textbf{number} \end{tabular}  & \textbf{Singular set}  &  \textbf{Family $\mathcal{G}_a$}\\\hline\hline

 & & & & &\\
$\begin{matrix}A_{5,1}\\
(\mathrm{ind}\, = 3)
\end{matrix}
$
 &
\begin{tabular}{l}
$[e_3,e_5]=e_1$,\\
$[e_4,e_5]=e_2$
\end{tabular}
&
\begin{tabular}{l}
$\mathcal{K}$-block of size $3 {\times} 3$,  \\
$\mathcal{K}$-block of size $1 {\times} 1$, \\
$\mathcal{K}$-block of size $1 {\times} 1$
\end{tabular}
&

&
$
\begin{matrix}
x_1=0, \\
x_2=0,\\
\mathrm{codim}\, S=2
\end{matrix}
$
&
$x_1,x_2,x_3,x_4$
\\
& & & & & \\[-2ex]\hline

 & & & & &\\
$\begin{matrix}A_{5,2}\\
(\mathrm{ind}\, = 3)
\end{matrix}
$
 &
\begin{tabular}{l}
$[e_2,e_5]=e_1$,\\
$[e_3,e_5]=e_2$,\\
$[e_4,e_5]=e_3$
\end{tabular}
&
\begin{tabular}{l}
$\mathcal{K}$-block of size $3 {\times} 3$,  \\
$\mathcal{K}$-block of size $1 {\times} 1$,  \\
$\mathcal{K}$-block of size $1 {\times} 1$
\end{tabular}
&

&
$
\begin{matrix}
x_1=0, \\
x_2=0,\\
x_3=0,\\
\mathrm{codim}\, S=3
\end{matrix}
$
&
$x_1,x_2,x_3,x_4$
\\
& & & & & \\[-2ex]\hline

 & & & & &\\
$\begin{matrix}A_{5,3}\\
(\mathrm{ind}\, = 3)
\end{matrix}
$
 &
\begin{tabular}{l}
$[e_3,e_4]=e_2$,\\
$[e_3,e_5]=e_1$,\\
$[e_4,e_5]=e_3$
\end{tabular}
&
\begin{tabular}{l}
$\mathcal{K}$-block of size $3 {\times} 3$,  \\
$\mathcal{K}$-block of size $1 {\times} 1$,  \\
$\mathcal{K}$-block of size $1 {\times} 1$
\end{tabular}
&

&
$
\begin{matrix}
x_1=0, \\
x_2=0,\\
x_3=0,\\
\mathrm{codim}\, S=3
\end{matrix}
$
&
$
\hskip-10pt\begin{array}{c}
x_1, x_2,\\ 
x_3^2+2x_2x_5-2x_1x_4,\\ 
2(-a_4x_1+a_5x_2+\\
+a_3x_3-a_1x_4+a_2x_5)
\end{array}
$
\\
& & & & & \\[-2ex]\hline

& & & & &\\
$\begin{matrix}A_{5,4}\\
(\mathrm{ind}\, = 1)
\end{matrix}
$
 &
$
\begin{tabular}{l}
$[e_2,e_4]=e_1$,\\
$[e_3,e_5]=e_1$
\end{tabular}
$ 
&
\begin{tabular}{l}
$\lambda$-block of size $2 {\times} 2$,  \\
$\lambda$-block of size $2  {\times} 2$,  \\
 and\\
$\mathcal{K}$-block of size $1 {\times} 1$  
\end{tabular}
&
$
\begin{matrix}
\lambda=-\frac{x_1}{a_1}
\end{matrix}
$
&
$
\begin{matrix}
x_1=0, \\
\mathrm{codim}\, S=1
\end{matrix}
$
&
$x_1, x_2, x_3$\\
& & & & & \\[-2ex]\hline

& & & & &\\
$\begin{matrix}A_{5,5}\\
(\mathrm{ind}\, = 1)
\end{matrix}
$
 &
$
\begin{tabular}{l}
$[e_3,e_4]=e_1$,\\
$[e_2,e_5]=e_1$,\\
$[e_3,e_5]=e_2$
\end{tabular}
$ 
&
\begin{tabular}{l}
$\lambda$-block of size $4  {\times} 4$,  \\
 and\\
$\mathcal{K}$-block of size $1 {\times} 1$  
\end{tabular}
&
$
\begin{matrix}
\lambda=-\frac{x_1}{a_1}
\end{matrix}
$
&
$
\begin{matrix}
x_1=0, \\
\mathrm{codim}\, S=1
\end{matrix}
$
&
$x_1, x_2, x_3$\\
& & & & & \\[-2ex]\hline

& & & & &\\
$\begin{matrix}A_{5,6}\\
(\mathrm{ind}\, = 1)
\end{matrix}
$
 &
$
\begin{tabular}{l}
$[e_3,e_4]=e_1$,\\
$[e_2,e_5]=e_1$,\\
$[e_3,e_5]=e_2$,\\
$[e_4,e_5]=e_3$
\end{tabular}
$ 
&
\begin{tabular}{l}
$\lambda$-block of size $4  {\times} 4$,  \\
 and\\
$\mathcal{K}$-block of size $1 {\times} 1$  
\end{tabular}
&
$
\begin{matrix}
\lambda=-\frac{x_1}{a_1}
\end{matrix}
$
&
$
\begin{matrix}
x_1=0, \\
\mathrm{codim}\, S=1
\end{matrix}
$
&
$x_1, x_2, x_3$\\
& & & & & \\[-2ex]\hline

 & & & & &\\
$\begin{matrix}A_{5,7}^{abc}\\
(\mathrm{ind}\, = 3)
\end{matrix}
$
 &
\begin{tabular}{l}
$[e_1,e_5]=e_1$,\\
$[e_2,e_5]=ae_2$,\\
$[e_3,e_5]=be_3$,\\
$[e_4,e_5]=ce_4$,\\
$(abc\ne0)$,\\
$(-1\le c\le b\le a\le 1)$
\end{tabular}
&
\begin{tabular}{l}
$\mathcal{K}$-block of size $3 {\times} 3$,  \\
$\mathcal{K}$-block of size $1 {\times} 1$,  \\
$\mathcal{K}$-block of size $1 {\times} 1$
\end{tabular}
&

&
$
\begin{matrix}
x_1=0, \\
x_2=0,\\
x_3=0,\\
x_4=0,\\
\mathrm{codim}\, S=4
\end{matrix}
$
&
$
x_1,x_2,x_3,x_4
$
\\\hline

 & & & & &\\
$\begin{matrix}A_{5,8}^{c}\\
(\mathrm{ind}\, = 3)
\end{matrix}
$
 &
\begin{tabular}{l}
$[e_2,e_5]=e_1$,\\
$[e_3,e_5]=e_3$,\\
$[e_4,e_5]=ce_4$,\\
$(-1< |c|\le 1)$
\end{tabular}
&
\begin{tabular}{l}
$\mathcal{K}$-block of size $3 {\times} 3$,  \\
$\mathcal{K}$-block of size $1 {\times} 1$,  \\
$\mathcal{K}$-block of size $1 {\times} 1$
\end{tabular}
&

&
$
\begin{matrix}
x_1=0, \\
x_3=0,\\
x_4=0,\\
\mathrm{codim}\, S=3
\end{matrix}
$
&
$
x_1,x_2,x_3,x_4
$
\\
& & & & & \\[-2ex]\hline

 & & & & &\\
$\begin{matrix}A_{5,9}^{bc}\\
(\mathrm{ind}\, = 3)
\end{matrix}
$
 &
\begin{tabular}{l}
$[e_1,e_5]=e_1$,\\
$[e_2,e_5]=e_1+e_2$,\\
$[e_3,e_5]=be_3$,\\
$[e_4,e_5]=ce_4$,\\
$(0\ne c\le b)$
\end{tabular}
&
\begin{tabular}{l}
$\mathcal{K}$-block of size $3 {\times} 3$,  \\
$\mathcal{K}$-block of size $1 {\times} 1$,  \\
$\mathcal{K}$-block of size $1 {\times} 1$
\end{tabular}
&

&
$
\begin{matrix}
x_1=0, \\
x_2=0,\\
x_3=0,\\
x_4=0,\\
\mathrm{codim}\, S=4
\end{matrix}
$
&
$
x_1,x_2,x_3,x_4
$
\\
& & & & & \\[-2ex]\hline

 & & & & &\\
$\begin{matrix}A_{5,10}\\
(\mathrm{ind}\, = 3)
\end{matrix}
$
 &
\begin{tabular}{l}
$[e_2,e_5]=e_1$,\\
$[e_3,e_5]=e_2$,\\
$[e_4,e_5]=e_4$
\end{tabular}
&
\begin{tabular}{l}
$\mathcal{K}$-block of size $3 {\times} 3$,  \\
$\mathcal{K}$-block of size $1 {\times} 1$,  \\
$\mathcal{K}$-block of size $1 {\times} 1$
\end{tabular}
&

&
$
\begin{matrix}
x_1=0, \\
x_2=0,\\
x_4=0,\\
\mathrm{codim}\, S=3
\end{matrix}
$
&
$
x_1,x_2,x_3,x_4
$
\\
& & & & & \\[-2ex]\hline

 & & & & &\\
$\begin{matrix}A_{5,11}^{c}\\
(\mathrm{ind}\, = 3)
\end{matrix}
$
 &
\begin{tabular}{l}
$[e_1,e_5]=e_1$,\\
$[e_2,e_5]=e_1+e_2$,\\
$[e_3,e_5]=e_2+e_3$,\\
$[e_4,e_5]=ce_4$,\\
$(c\ne0)$
\end{tabular}
&
\begin{tabular}{l}
$\mathcal{K}$-block of size $3 {\times} 3$,  \\
$\mathcal{K}$-block of size $1 {\times} 1$,  \\
$\mathcal{K}$-block of size $1 {\times} 1$
\end{tabular}
&

&
$
\begin{matrix}
x_1=0, \\
x_2=0,\\
x_3=0,\\
x_4=0,\\
\mathrm{codim}\, S=4
\end{matrix}
$
&
$
x_1,x_2,x_3,x_4
$
\\
& & & & & \\  \hline

\end{tabular}

\newpage


$\quad 4$
\vskip-80pt
\hskip-30pt
\footnotesize{\begin{tabular}{|c| c| c| c| c| c|}\hline
 \begin{tabular}{l}\textbf{Name} \\ \textbf{and} \\ \textbf{Index} \end{tabular}  & \textbf{Relations} & \begin{tabular}{l}\textbf{Jordan--Kronecker} \\  \textbf{invariant} \end{tabular} & \begin{tabular}{l} \textbf{Char.}\\ \textbf{number} \end{tabular}  & \textbf{Singular set}  &  \textbf{Family $\mathcal{G}_a$}\\\hline\hline

 & & & & &\\
$\begin{matrix}A_{5,12}\\
(\mathrm{ind}\, = 3)
\end{matrix}
$
 &
\begin{tabular}{l}
$[e_1,e_5]=e_1$,\\
$[e_2,e_5]=e_1+e_2$,\\
$[e_3,e_5]=e_2+e_3$,\\
$[e_4,e_5]=e_3+e_4$
\end{tabular}
&
\begin{tabular}{l}
$\mathcal{K}$-block of size $3 {\times} 3$,  \\
$\mathcal{K}$-block of size $1 {\times} 1$,  \\
$\mathcal{K}$-block of size $1 {\times} 1$
\end{tabular}
&

&
$
\begin{matrix}
x_1=0, \\
x_2=0,\\
x_3=0,\\
x_4=0,\\
\mathrm{codim}\, S=4
\end{matrix}
$
&
$
x_1,x_2,x_3,x_4
$
\\
& & & & & \\[-2ex]\hline

 & & & & &\\
$\begin{matrix}A_{5,13}^{apq}\\
(\mathrm{ind}\, = 3)
\end{matrix}
$
 &
\begin{tabular}{l}
$[e_1,e_5]=e_1$,\\
$[e_2,e_5]=ae_2$,\\
$[e_3,e_5]=pe_3-qe_4$,\\
$[e_4,e_5]=qe_3+pe_4$,\\
$(aq\ne0, |a|\le 1)$ \\

\end{tabular}
&
\begin{tabular}{l}
$\mathcal{K}$-block of size $3 {\times} 3$,  \\
$\mathcal{K}$-block of size $1 {\times} 1$,  \\
$\mathcal{K}$-block of size $1 {\times} 1$
\end{tabular}
&

&
$
\begin{matrix}
x_1=0, \\
x_2=0,\\
x_3=0,\\
x_4=0,\\
\mathrm{codim}\, S=4
\end{matrix}
$
&
$
x_1,x_2,x_3,x_4
$
\\ \hline

 & & & & &\\
$\begin{matrix}A_{5,14}^{p}\\
(\mathrm{ind}\, = 3)
\end{matrix}
$
 &
\begin{tabular}{l}
$[e_2,e_5]=e_1$,\\
$[e_3,e_5]=pe_3-e_4$,\\
$[e_4,e_5]=e_3+pe_4$
\end{tabular}
&
\begin{tabular}{l}
$\mathcal{K}$-block of size $3 {\times} 3$,  \\
$\mathcal{K}$-block of size $1 {\times} 1$,  \\
$\mathcal{K}$-block of size $1 {\times} 1$
\end{tabular}
&

&
$
\begin{matrix}
x_1=0, \\
x_3=0,\\
x_4=0,\\
\mathrm{codim}\, S=3
\end{matrix}
$
&
$
x_1,x_2,x_3,x_4
$
\\
& & & & & \\[-2ex]\hline

 & & & & &\\
$\begin{matrix}A_{5,15}^{a}\\
(\mathrm{ind}\, = 3)
\end{matrix}
$
 &
\begin{tabular}{l}
$[e_1,e_5]=e_1$,\\
$[e_2,e_5]=e_1+e_2$,\\
$[e_3,e_5]=ae_3$,\\
$[e_4,e_5]=e_3+ae_4$,\\
$(|a|\le 1)$
\end{tabular}
&
\begin{tabular}{l}
$\mathcal{K}$-block of size $3 {\times} 3$,  \\
$\mathcal{K}$-block of size $1 {\times} 1$,  \\
$\mathcal{K}$-block of size $1 {\times} 1$
\end{tabular}
&

&
$
\begin{matrix}
x_1=0, \\
x_2=0,\\
x_3=0,\\
x_4=0,\\
\mathrm{codim}\, S=4
\end{matrix}
$
&
$
x_1,x_2,x_3,x_4
$
\\
& & & & & \\[-2ex]\hline

 & & & & &\\
$\begin{matrix}A_{5,16}^{pq}\\
(\mathrm{ind}\, = 3)
\end{matrix}
$
 &
\begin{tabular}{l}
$[e_1,e_5]=e_1$,\\
$[e_2,e_5]=e_1+e_2$,\\
$[e_3,e_5]=pe_3-qe_4$,\\
$[e_4,e_5]=qe_3+pe_4$,\\
$(q\ne0)$
\end{tabular}
&
\begin{tabular}{l}
$\mathcal{K}$-block of size $3 {\times} 3$,  \\
$\mathcal{K}$-block of size $1 {\times} 1$,  \\
$\mathcal{K}$-block of size $1 {\times} 1$
\end{tabular}
&

&
$
\begin{matrix}
x_1=0, \\
x_2=0,\\
x_3=0,\\
x_4=0,\\
\mathrm{codim}\, S=4
\end{matrix}
$
&
$
x_1,x_2,x_3,x_4
$
\\
& & & & & \\[-2ex]\hline

 & & & & &\\
$\begin{matrix}A_{5,17}^{spq}\\
(\mathrm{ind}\, = 3)
\end{matrix}
$
 &
\begin{tabular}{l}
$[e_1,e_5]=pe_1-e_2$,\\
$[e_2,e_5]=e_1+pe_2$,\\
$[e_3,e_5]=qe_3-se_4$,\\
$[e_4,e_5]=se_3+qe_4$,\\
$(s\ne0)$
\end{tabular}
&
\begin{tabular}{l}
$\mathcal{K}$-block of size $3 {\times} 3$,  \\
$\mathcal{K}$-block of size $1 {\times} 1$,  \\
$\mathcal{K}$-block of size $1 {\times} 1$
\end{tabular}
&

&
$
\begin{matrix}
x_1=0, \\
x_2=0,\\
x_3=0,\\
x_4=0,\\
\mathrm{codim}\, S=4
\end{matrix}
$
&
$
x_1,x_2,x_3,x_4
$
\\
& & & & & \\[-2ex]\hline

 & & & & &\\
$\begin{matrix}A_{5,18}^{p}\\
(\mathrm{ind}\, = 3)
\end{matrix}
$
 &
\begin{tabular}{l}
$[e_1,e_5]=pe_1-e_2$,\\
$[e_2,e_5]=e_1+pe_2$,\\
$[e_3,e_5]=e_1+pe_3-e_4$,\\
$[e_4,e_5]=e_2+e_3+pe_4$,\\
$(p\le 0)$
\end{tabular}
&
\begin{tabular}{l}
$\mathcal{K}$-block of size $3 {\times} 3$,  \\
$\mathcal{K}$-block of size $1 {\times} 1$,  \\
$\mathcal{K}$-block of size $1 {\times} 1$
\end{tabular}
&

&
$
\begin{matrix}
x_1=0, \\
x_2=0,\\
x_3=0,\\
x_4=0,\\
\mathrm{codim}\, S=4
\end{matrix}
$
&
$
x_1,x_2,x_3,x_4
$
\\
& & & & & \\[-2ex]\hline

& & & & &\\
$\begin{matrix}A_{5,19}^{ab}\\
(\mathrm{ind}\, = 1)\\
(b\ne0)
\end{matrix}
$
 &
$
\begin{tabular}{l}
$[e_2,e_3]=e_1$,\\
$[e_1,e_5]=ae_1$,\\
$[e_2,e_5]=e_2$,\\
$[e_3,e_5]=(a-1)e_3$,\\
$[e_4,e_5]=be_4$,
\end{tabular}
$ 
&
\begin{tabular}{l}
$\lambda$-block of size $2 {\times} 2$,  \\
and\\
$\mathcal{K}$-block of size $3 {\times} 3$  
\end{tabular}
&
$
\begin{matrix}
\lambda=-\frac{x_1}{a_1}
\end{matrix}
$
&
$
\begin{matrix}
x_1=0, \\
\mathrm{codim}\, S=1
\end{matrix}
$
&
$x_1, x_2, x_4$\\\hline

& & & & &\\
$\begin{matrix}A_{5,20}^{a}\\
(\mathrm{ind}\, = 1)
\end{matrix}
$
 &
$
\begin{tabular}{l}
$[e_2,e_3]=e_1$,\\
$[e_1,e_5]=ae_1$,\\
$[e_2,e_5]=e_2$,\\
$[e_3,e_5]=(a-1)e_3$,\\
$[e_4,e_5]=e_1+ae_4$
\end{tabular}
$ 
&
\begin{tabular}{l}
$\lambda$-block of size $2 {\times} 2$,  \\
and\\
$\mathcal{K}$-block of size $3 {\times} 3$  
\end{tabular}
&
$
\begin{matrix}
\lambda=-\frac{x_1}{a_1}
\end{matrix}
$
&
$
\begin{matrix}
x_1=0, \\
\mathrm{codim}\, S=1
\end{matrix}
$
&
$x_1, x_2, x_4$\\
& & & & & \\[-2ex]\hline

& & & & &\\
$\begin{matrix}A_{5,21}\\
(\mathrm{ind}\, = 1)
\end{matrix}
$
 &
$
\begin{tabular}{l}
$[e_2,e_3]=e_1$,\\
$[e_1,e_5]=2e_1$,\\
$[e_2,e_5]=e_2+e_3$,\\
$[e_3,e_5]=e_3+e_4$,\\
$[e_4,e_5]=e_4$
\end{tabular}
$ 
&
\begin{tabular}{l}
$\lambda$-block of size $2 {\times} 2$,  \\
and\\
$\mathcal{K}$-block of size $3 {\times} 3$  
\end{tabular}
&
$
\begin{matrix}
\lambda=-\frac{x_1}{a_1}
\end{matrix}
$
&
$
\begin{matrix}
x_1=0, \\
\mathrm{codim}\, S=1
\end{matrix}
$
&
$x_1, x_2, x_4$\\
& & & & & \\ \hline

\end{tabular}

\newpage


$\quad$
\vskip-60pt
\hskip-20pt
\footnotesize{
\begin{tabular}{|c| c| c| c| c| c|}\hline
 \begin{tabular}{l}\textbf{Name} \\ \textbf{and} \\ \textbf{Index} \end{tabular}  & \textbf{Relations} & \begin{tabular}{l}\textbf{Jordan--Kronecker} \\  \textbf{invariant} \end{tabular} & \begin{tabular}{l} \textbf{Char.}\\ \textbf{number} \end{tabular}  & \textbf{Singular set}  &  \textbf{Family $\mathcal{G}_a$}\\\hline\hline

& & & & &\\
$\begin{matrix}A_{5,22}\\
(\mathrm{ind}\, = 1)
\end{matrix}
$
 &
$
\begin{tabular}{l}
$[e_2,e_3]=e_1$,\\
$[e_2,e_5]=e_3$,\\
$[e_4,e_5]=e_4$
\end{tabular}
$ 
&
\begin{tabular}{l}
$\lambda_1$-block of size $2 {\times} 2$,  \\
$\lambda_2$-block of size $2  {\times} 2$,  \\
$\lambda_1\ne \lambda_2$ \\
 and\\
$\mathcal{K}$-block of size $1 {\times} 1$  
\end{tabular}
&
$
\begin{matrix}
\lambda_1=-\frac{x_1}{a_1}\\
\lambda_2=-\frac{x_4}{a_4}
\end{matrix}
$
&
$
\begin{matrix}
x_1x_4=0, \\
\mathrm{codim}\, S=1
\end{matrix}
$
&
$x_1, x_2, x_4$\\
& & & & & \\[-2ex]\hline

& & & & &\\
$\begin{matrix}A_{5,23}^{b}\\
(\mathrm{ind}\, = 1)
\end{matrix}
$
 &
$
\begin{tabular}{l}
$[e_2,e_3]=e_1$,\\
$[e_1,e_5]=2e_1$,\\
$[e_2,e_5]=e_2+e_3$,\\
$[e_3,e_5]=e_3$,\\
$[e_4,e_5]=be_4$,\\
$(b\ne0)$
\end{tabular}
$ 
&
\begin{tabular}{l}
$\lambda$-block of size $2 {\times} 2$,  \\
and\\
$\mathcal{K}$-block of size $3 {\times} 3$  
\end{tabular}
&
$
\begin{matrix}
\lambda=-\frac{x_1}{a_1}
\end{matrix}
$
&
$
\begin{matrix}
x_1=0, \\
\mathrm{codim}\, S=1
\end{matrix}
$
&
$x_1, x_2, x_4$\\
& & & & & \\[-2ex]\hline

& & & & &\\
$\begin{matrix}A_{5,24}^{\epsilon}\\
(\mathrm{ind}\, = 1)\\
(\epsilon = \pm 1)
\end{matrix}
$
 &
$
\begin{tabular}{l}
$[e_2,e_3]=e_1$,\\
$[e_1,e_5]=2e_1$,\\
$[e_2,e_5]=e_2+e_3$,\\
$[e_3,e_5]=e_3$,\\
$[e_4,e_5]=\epsilon e_1+2e_4$
\end{tabular}
$ 
&
\begin{tabular}{l}
$\lambda$-block of size $2 {\times} 2$,  \\
and\\
$\mathcal{K}$-block of size $3 {\times} 3$  
\end{tabular}
&
$
\begin{matrix}
\lambda=-\frac{x_1}{a_1}
\end{matrix}
$
&
$
\begin{matrix}
x_1=0, \\
\mathrm{codim}\, S=1
\end{matrix}
$
&
$x_1, x_2, x_4$\\ \hline

& & & & &\\
$\begin{matrix}A_{5,25}^{bp}\\
(\mathrm{ind}\, = 1)
\end{matrix}
$
 &
$
\begin{tabular}{l}
$[e_2,e_3]=e_1$,\\
$[e_1,e_5]=2pe_1$,\\
$[e_2,e_5]=pe_2+e_3$,\\
$[e_3,e_5]=pe_3-e_2$,\\
$[e_4,e_5]=be_4$,\\
$(b\ne 0)$
\end{tabular}
$ 
&
\begin{tabular}{l}
$\lambda$-block of size $2 {\times} 2$,  \\
and\\
$\mathcal{K}$-block of size $3 {\times} 3$  
\end{tabular}
&
$
\begin{matrix}
\lambda=-\frac{x_1}{a_1}
\end{matrix}
$
&
$
\begin{matrix}
x_1=0, \\
\mathrm{codim}\, S=1
\end{matrix}
$
&
$x_1, x_2, x_4$\\
& & & & & \\[-3ex]\hline

& & & & &\\
$\begin{matrix}A_{5,26}^{p\epsilon}\\
(\mathrm{ind}\, = 1)
\end{matrix}
$
 &
$
\begin{tabular}{l}
$[e_2,e_3]=e_1$,\\
$[e_1,e_5]=2pe_1$,\\
$[e_2,e_5]=pe_2+e_3$,\\
$[e_3,e_5]=pe_3-e_2$,\\
$[e_4,e_5]=\epsilon e_1+2e_4$,\\
$(\epsilon = \pm 1)$
\end{tabular}
$ 
&
\begin{tabular}{l}
$\lambda$-block of size $2 {\times} 2$,  \\
and\\
$\mathcal{K}$-block of size $3 {\times} 3$  
\end{tabular}
&
$
\begin{matrix}
\lambda=-\frac{x_1}{a_1}
\end{matrix}
$
&
$
\begin{matrix}
x_1=0, \\
\mathrm{codim}\, S=1
\end{matrix}
$
&
$x_1, x_2, x_4$\\
& & & & & \\[-3ex]\hline

& & & & &\\
$\begin{matrix}A_{5,27}\\
(\mathrm{ind}\, = 1)
\end{matrix}
$
 &
$
\begin{tabular}{l}
$[e_2,e_3]=e_1$,\\
$[e_1,e_5]=e_1$,\\
$[e_3,e_5]=e_3+e_4$,\\
$[e_4,e_5]=e_1+e_4$
\end{tabular}
$ 
&
\begin{tabular}{l}
$\lambda$-block of size $2 {\times} 2$,  \\
 and\\
$\mathcal{K}$-block of size $3 {\times} 3$  
\end{tabular}
&
$
\begin{matrix}
\lambda=-\frac{x_1}{a_1}
\end{matrix}
$
&
$
\begin{matrix}
x_1=0, \\
\mathrm{codim}\, S=1
\end{matrix}
$
&
$x_1, x_2, x_4$\\
& & & & & \\[-3ex]\hline

& & & & &\\
$\begin{matrix}A_{5,28}^{a}\\
(\mathrm{ind}\, = 1)
\end{matrix}
$
 &
$
\begin{tabular}{l}
$[e_2,e_3]=e_1$,\\
$[e_1,e_5]=ae_1$,\\
$[e_2,e_5]=(a-1)e_2$,\\
$[e_3,e_5]=e_3+e_4$,\\
$[e_4,e_5]=e_4$
\end{tabular}
$ 
&
\begin{tabular}{l}
$\lambda$-block of size $2 {\times} 2$,  \\
and\\
$\mathcal{K}$-block of size $3 {\times} 3$  
\end{tabular}
&
$
\begin{matrix}
\lambda=-\frac{x_1}{a_1}
\end{matrix}
$
&
$
\begin{matrix}
x_1=0, \\
\mathrm{codim}\, S=1
\end{matrix}
$
&
$x_1, x_2, x_4$\\
& & & & & \\[-3ex]\hline

& & & & &\\
$\begin{matrix}A_{5,29}\\
(\mathrm{ind}\, = 1)
\end{matrix}
$
 &
$
\begin{tabular}{l}
$[e_2,e_4]=e_1$,\\
$[e_1,e_5]=e_1$,\\
$[e_2,e_5]=e_2$,\\
$[e_4,e_5]=e_3$
\end{tabular}
$ 
&
\begin{tabular}{l}
$\lambda$-block of size $4 {\times} 4$,  \\
and\\
$\mathcal{K}$-block of size $1 {\times} 1$  
\end{tabular}
&
$
\begin{matrix}
\lambda=-\frac{x_1}{a_1}
\end{matrix}
$
&
$
\begin{matrix}
x_1=0, \\
\mathrm{codim}\, S=1
\end{matrix}
$
&
$x_1, x_2, x_3$\\
& & & & & \\[-3ex]\hline

& & & & &\\
$\begin{matrix}A_{5,30}^{a}\\
(\mathrm{ind}\, = 1)
\end{matrix}
$
 &
$
\begin{tabular}{l}
$[e_2,e_4]=e_1$,\\
$[e_3,e_4]=e_2$,\\
$[e_1,e_5]=(a+1)e_1$,\\
$[e_2,e_5]=ae_2$,\\
$[e_3,e_5]=(a-1)e_3$,\\
$[e_4,e_5]=e_4$
\end{tabular}
$ 
&
\begin{tabular}{l}
$\mathcal{K}$-block of size $5 {\times} 5$  
\end{tabular}
&

&
$
\begin{matrix}
x_1=0, \\
x_2=0,\\
\mathrm{codim}\, S=2
\end{matrix}
$
&
$x_1, x_2, x_3$\\\hline
\end{tabular}

\newpage


$\quad$

\vskip-40pt
\hskip-15pt
\footnotesize{
\begin{tabular}{|c| c| c| c| c| c|}\hline
 \begin{tabular}{l}\textbf{Name} \\ \textbf{and} \\ \textbf{Index} \end{tabular}  & \textbf{Relations} & \begin{tabular}{l}\textbf{Jordan--Kronecker} \\  \textbf{invariant} \end{tabular} & \begin{tabular}{l} \textbf{Char.}\\ \textbf{number} \end{tabular}  & \textbf{Singular set}  &  \textbf{Family $\mathcal{G}_a$}\\\hline\hline

& & & & &\\
$\begin{matrix}A_{5,31}\\
(\mathrm{ind}\, = 1)
\end{matrix}
$
 &
$
\begin{tabular}{l}
$[e_2,e_4]=e_1$,\\
$[e_3,e_4]=e_2$,\\
$[e_1,e_5]=3e_1$,\\
$[e_2,e_5]=2e_2$,\\
$[e_3,e_5]=e_3$,\\
$[e_4,e_5]=e_3+e_4$
\end{tabular}
$ 
&
\begin{tabular}{l}
$\mathcal{K}$-block of size $5 {\times} 5$  
\end{tabular}
&

&
$
\begin{matrix}
x_1=0, \\
x_2=0,\\
\mathrm{codim}\, S=2
\end{matrix}
$
&
$x_1, x_2, x_3$\\
& & & & & \\[-2ex]\hline

& & & & &\\
$\begin{matrix}A_{5,32}^{a}\\
(\mathrm{ind}\, = 1)
\end{matrix}
$
 &
$
\begin{tabular}{l}
$[e_2,e_4]=e_1$,\\
$[e_3,e_4]=e_2$,\\
$[e_1,e_5]=e_1$,\\
$[e_2,e_5]=e_2$,\\
$[e_3,e_5]=ae_1+e_3$
\end{tabular}
$ 
&
\begin{tabular}{l}
$\mathcal{K}$-block of size $5 {\times} 5$  
\end{tabular}
&

&
$
\begin{matrix}
x_1=0, \\
x_2=0,\\
\mathrm{codim}\, S=2
\end{matrix}
$
&
$x_1, x_2, x_3$\\
& & & & & \\[-2ex]\hline

& & & & &\\
$\begin{matrix}A_{5,33}^{ab}\\
(\mathrm{ind}\, = 1)
\end{matrix}
$
 &
$
\begin{tabular}{l}
$[e_1,e_4]=e_1$,\\
$[e_3,e_4]=be_3$,\\
$[e_2,e_5]=e_2$,\\
$[e_3,e_5]=ae_3$,\\
$(a^2+b^2\ne0)$
\end{tabular}
$ 
&
\begin{tabular}{l}
$\mathcal{K}$-block of size $5 {\times} 5$  
\end{tabular}
&

&
$
\begin{matrix}
x_1x_2=0, \\
x_1x_3=0,\\
x_2x_3=0,\\
\mathrm{codim}\, S=3
\end{matrix}
$
&
$x_1, x_2, x_3$\\
& & & & & \\[-2ex]\hline

& & & & &\\
$\begin{matrix}A_{5,34}^{a}\\
(\mathrm{ind}\, = 1)
\end{matrix}
$
 &
$
\begin{tabular}{l}
$[e_1,e_4]=ae_1$,\\
$[e_2,e_4]=e_2$,\\
$[e_3,e_4]=e_3$,\\
$[e_1,e_5]=e_1$,\\
$[e_3,e_5]=e_2$
\end{tabular}
$ 
&
\begin{tabular}{l}
$\mathcal{K}$-block of size $5 {\times} 5$  
\end{tabular}
&

&
$
\begin{matrix}
x_2=0, \\
x_1x_3=0,\\
\mathrm{codim}\, S=2
\end{matrix}
$
&
$x_1, x_2, x_3$\\
& & & & & \\[-2ex]\hline

& & & & &\\
$\begin{matrix}A_{5,35}^{ab}\\
(\mathrm{ind}\, = 1)
\end{matrix}
$
 &
$
\begin{tabular}{l}
$[e_1,e_4]=be_1$,\\
$[e_2,e_4]=e_2$,\\
$[e_3,e_4]=e_3$,\\
$[e_1,e_5]=ae_1$,\\
$[e_2,e_5]=-e_3$,\\
$[e_3,e_5]=e_2$,\\
$(a^2+b^2\ne0)$
\end{tabular}
$ 
&
\begin{tabular}{l}
$\mathcal{K}$-block of size $5 {\times} 5$  
\end{tabular}
&

&
$
\begin{matrix}
x_1x_2=0, \\
x_2^2+x_3^2=0,\\
\mathrm{codim}\, S=2
\end{matrix}
$
&
$x_1, x_2, x_3$\\\hline

\end{tabular}

\newpage


$\quad$

\vskip-40pt

\hskip-47pt 
\footnotesize{\begin{tabular}{|c| c| c| c| c| c|}\hline
 \begin{tabular}{l}\textbf{Name} \\ \textbf{and} \\ \textbf{Index} \end{tabular}  & \textbf{Relations} & \begin{tabular}{l}\textbf{Jordan--Kronecker} \\  \textbf{invariant} \end{tabular} & \begin{tabular}{l} \textbf{Char.}\\ \textbf{number} \end{tabular}  & \textbf{Singular set}  &  \textbf{Family $\mathcal{G}_a$}\\\hline\hline

& & & & &\\
$\begin{matrix}A_{5,36}\\
(\mathrm{ind}\, = 1)
\end{matrix}
$
 &
$
\hskip-10pt\begin{tabular}{l}
$[e_2,e_3]=e_1$,\\
$[e_1,e_4]=e_1$,\\
$[e_2,e_4]=e_2$,\\
$[e_2,e_5]=-e_2$,\\
$[e_3,e_5]=e_3$
\end{tabular}
$ 
&
\hskip-10pt\begin{tabular}{l}
$\mathcal{K}$-block of size $5 {\times} 5$  
\end{tabular}
&
\hskip-10pt
&
$
\hskip-10pt\begin{matrix}
x_1=0, \\
x_2x_3=0,\\
\mathrm{codim}\, S=2
\end{matrix}
$
&
$
\hskip-20pt\begin{array}{c}
\frac{1}{a_1^2}(a_1^2x_5+a_1a_3x_2+\\
+a_1a_2x_3-a_2a_3x_1),\\\\
\frac{1}{a_1^3}(a_1^2x_2x_3-a_1a_3x_1x_2-\\
-a_1a_2x_1x_3+a_2a_3x_1^2),\\\\
\frac{1}{a_1^4}(-a_1^2x_1x_2x_3+a_1a_3x_1^2x_2+\\
+a_1a_2x_1^2x_3-a_2a_3x_1^3)
\end{array}
$\\\hline

$\begin{matrix}A_{5,37}\\
(\mathrm{ind}\, = 1)
\end{matrix}
$ &
$
\hskip-10pt\begin{tabular}{l}
$[e_2,e_3]=e_1$,\\
$[e_1,e_4]=2e_1$,\\
$[e_2,e_4]=e_2$,\\
$[e_3,e_4]=e_3$,\\
$[e_2,e_5]=-e_3$,\\
$[e_3,e_5]=e_2$
\end{tabular}
$ 
&
\hskip-10pt\begin{tabular}{l}
$\mathcal{K}$-block of size $5 {\times} 5$  
\end{tabular}
&
\hskip-10pt
&
$
\hskip-10pt\begin{matrix}
x_1=0, \\
x_2^2+x_3^2=0,\\
\mathrm{codim}\, S=2
\end{matrix}
$
&
$
\hskip-10pt\begin{array}{c}
\frac{1}{a_1^2}\bigl(-(a_2^2+a_3^2)x_1+2a_1a_2x_2+\\
+2a_1a_3x_3+2a_1^2x_5\bigr),\\\\
\frac{1}{a_1^3}\bigl((a_2^2+a_3^2)x_1^2-2a_1a_2x_1x_2-\\
-2a_1a_3x_1x_3+a_1^2x_2^2+a_1^2x_3^2\bigr),\\\\
\frac{1}{a_1^4}\bigl(-(a_2^2+a_3^2)x_1^3+2a_1a_2x_1^2x_2+\\
+2a_1a_3x_1^2x_3-a_1^2x_1x_2^2-a_1^2x_1x_3^2\bigr)
\end{array}
$\\\hline

$\begin{matrix}A_{5,38}\\
(\mathrm{ind}\, = 1)
\end{matrix}
$
 &
$
\hskip-10pt\begin{tabular}{l}
$[e_1,e_4]=e_1$,\\
$[e_2,e_5]=e_2$,\\
$[e_4,e_5]=e_3$
\end{tabular}
$ 
&
\hskip-10pt\begin{tabular}{l}
$\lambda_1$-block of size $2 {\times} 2$,  \\
$\lambda_2$-block of size $2  {\times} 2$,  \\
$\lambda_1\ne \lambda_2$ \\
 and\\
$\mathcal{K}$-block of size $1 {\times} 1$  
\end{tabular}
&
$
\hskip-10pt\begin{matrix}
\lambda_1=-\frac{x_1}{a_1}\\
\lambda_2=-\frac{x_2}{a_2}
\end{matrix}
$
&
$
\hskip-10pt\begin{matrix}
x_1x_2=0, \\
\mathrm{codim}\, S=1
\end{matrix}
$
&
$x_1, x_2, x_3$\\\hline

$\begin{matrix}A_{5,39}\\
(\mathrm{ind}\, = 1)
\end{matrix}
$
 &
$
\hskip-10pt\begin{tabular}{l}
$[e_1,e_4]=e_1$,\\
$[e_2,e_4]=e_2$,\\
$[e_1,e_5]=-e_2$,\\
$[e_2,e_5]=e_1$,\\
$[e_4,e_5]=e_3$
\end{tabular}
$ 
&
\hskip-10pt\begin{tabular}{l}
$\lambda_1$-block of size $2 {\times} 2$,  \\
$\lambda_2$-block of size $2  {\times} 2$,  \\
$\lambda_1\ne \lambda_2$ \\
 and\\
$\mathcal{K}$-block of size $1 {\times} 1$  
\end{tabular}
&
$
\hskip-5pt\begin{matrix}
\lambda_1=-\frac{x_1+ix_2}{a_1+ia_2}\\
\lambda_2=-\frac{x_1-ix_2}{a_1-ia_2}
\end{matrix}
$
&
$
\hskip-10pt\begin{matrix}
x_1^2+x_2^2=0, \\
\mathrm{codim}\, S=1
\end{matrix}
$
&
$x_1, x_2, x_3$\\\hline

$\begin{matrix}A_{5,40}\\
(\mathrm{ind}\, = 1)
\end{matrix}
$
 &
$
\hskip-10pt\begin{tabular}{l}
$[e_1,e_2]=2e_1$,\\
$[e_1,e_3]=-e_2$,\\
$[e_2,e_3]=2e_3$,\\
$[e_1,e_4]=e_5$,\\
$[e_2,e_4]=e_4$,\\
$[e_2,e_5]=-e_5$,\\
$[e_3,e_5]=e_4$
\end{tabular}
$ 
&
\hskip-10pt\begin{tabular}{l}
$\mathcal{K}$-block of size $5 {\times} 5$  
\end{tabular}
&
\hskip-10pt
&
$
\hskip-10pt\begin{matrix}
x_4=0, \\
x_5=0,\\
\mathrm{codim}\, S=2
\end{matrix}
$
&
$
\hskip-20pt\begin{array}{c}
x_1x_4^2-x_2x_4x_5-x_3x_5^2,\\\\
2a_4x_1x_4+a_1x_4^2-a_5x_2x_4-\\
-a_4x_2x_5-a_2x_4x_5-\\
-2a_5x_3x_5-a_3x_5^2,\\\\
a_4^2x_1-a_4a_5x_2-a_5^2x_3+\\
+(2a_1a_4-a_2a_5)x_4-\\
-(a_2a_4+2a_3a_5)x_5
\end{array}
$\\\hline
\end{tabular}}

\end{document}